\DeclareMathAlphabet{\scr}{U}{rsfs}{m}{n}
\def\draftdate{December 31, 2010}
\newcommand{\LC}[1][n]{\oC_{#1}}
\newcommand{\LCR}[1][n]{\oC_{#1}\,R}
\newcommand{\ld}{\mathbf{L}}
\newcommand{\TQ}{Q^{\ld}_{\LC}}
\newcommand{\alg}[2]{#1\,#2}
\newcommand{\gO}{\oC}
\newcommand{\bgsO}{\bC}
\newcommand{\bgO}{\bgsO^{\sharp}}
\newcommand{\ssdot}{\bullet}
\newcommand{\subdot}{_{\ssdot}}
\newcommand{\MA}[1][{}]{{MA}_{#1}}
\newcommand{\otos}[1][{s}]{\underline{#1}}
\newcommand{\otoj}{\otos[j]}
\newcommand{\otom}{\otos[m]}
\newcommand{\oton}{\otos[n]}
\newcommand{\otop}{\otos[p]}
\mathchardef\varDelta="7101
\newcommand{\DDelta}{{\mathbf \varDelta}}
\newcommand{\oDDelta}{\underline\DDelta}
\newcommand{\ma}{\#a}
\newcommand{\MoAlg}[1]{M({#1})}
\newcommand{\embf}{\rho_{\mathrm{first}}}
\newcommand{\embl}{\rho_{\mathrm{last}}}
\newcommand{\bLC}[1][{n}]{\bC^{\sharp}_{#1}}
\newcommand{\btLC}[1][{n}]{\bC_{#1}}
\newcommand{\NC}[1][n]{\widetilde\oC_{#1}}
\newcommand{\btNC}[1][n]{\widetilde\bC_{#1}}
\newcommand{\bNC}[1][n]{\widetilde\bC^{\sharp}_{#1}}
\newcommand{\MN}[1][{}]{{MN}_{#1}}
\newcommand{\tB}{\tilde B}
\newcommand{\tE}{\tilde E}
\newcommand{\tos}[1][{s}]{\mathbf{#1}}
\newcommand{\toj}{\tos[j]}
\newcommand{\tom}{\tos[m]}
\def\n^#1_#2{\nu_{#2}}
\newcommand{\hotimes}{\mathbin{\widehat\otimes}}
\newcommand{\TQN}{Q^{\ld}}
\newcommand{\SSB}{\oB}
\DeclareMathOperator{\sd}{\mathrm{sd}_{2}}
\newcommand{\putatop}[2]{\genfrac{}{}{0pt}{}{#1}{#2}}
\newcommand{\subseg}[2]{%
\overbrace{\hbox to #2{\hss%
\vbox to 8pt{\vss\hrule height2pt width2pt depth1pt}%
\vbox to 8pt{\vss\hrule height1pt width#2 depth0pt}%
\vbox to 8pt{\vss\hrule height2pt width2pt depth1pt}%
\vbox to 0pt{\vss\hrule height0pt width0pt depth8pt}%
\hss}}^{#1}}
\newcommand{\bC}{{\mathbb{C}}}
\newcommand{\bR}{{\mathbb{R}}}
\let\catsymbfont\mathfrak 
\newcommand{\aD}{{\catsymbfont{D}}}
\newcommand{\aM}{{\catsymbfont{M}}}
\let\opsymbfont\mathcal 
\newcommand{\oA}{{\opsymbfont{A}}}
\newcommand{\oB}{{\opsymbfont{B}}}
\newcommand{\oC}{{\opsymbfont{C}}}
\newcommand{\oI}{{\opsymbfont{I}}}
\newcommand{\oP}{{\opsymbfont{P}}}
\newcommand{\oT}{{\opsymbfont{T}}}
\newcommand{\iso}{\cong}     
\newcommand{\sma}{\wedge}    
\renewcommand{\to}{\mathchoice{\longrightarrow}{\rightarrow}{\rightarrow}{\rightarrow}}
\newcommand{\from}{\mathchoice{\longleftarrow}{\leftarrow}{\leftarrow}{\leftarrow}}
\def\quickop#1{\expandafter\DeclareMathOperator\csname #1\endcsname{#1}}
\newtheorem{thm}[equation]{Theorem}
\newtheorem{lem}[equation]{Lemma}
\newtheorem{prop}[equation]{Proposition}
\theoremstyle{definition}
\newtheorem{defn}[equation]{Definition}
\newtheorem{cons}[equation]{Construction}
\newtheorem{hyp}[equation]{Hypothesis}
\theoremstyle{remark}
\newtheorem{rem}[equation]{Remark}
\newtheorem{example}[equation]{Example}
\numberwithin{equation}{section}
\newcommand{\term}[1]{\emph{#1}}
\begin{document}

\title{Homology of $E_{n}$ Ring Spectra and Iterated $THH$}
\author{Maria Basterra}
\address{Department of Mathematics, University of New Hampshire, Durham, NH}
\email{basterra@math.unh.edu}
\author{Michael A. Mandell}
\address{Department of Mathematics, Indiana University, Bloomington, IN}
\email{mmandell@indiana.edu}
\thanks{The second author was supported in part by NSF grant DMS-0804272}

\date{\draftdate}

\subjclass{Primary 55P43; Secondary 55P48}

\begin{abstract}
We describe an iterable construction of $THH$ for an $E_{n}$ ring
spectrum.  The reduced version is an iterable bar construction and its
$n$-th iterate gives a model for the shifted cotangent complex at the
augmentation, representing reduced topological Quillen homology of an
augmented $E_{n}$ algebra.
\end{abstract}

\maketitle

\section{Introduction}

Over the past two decades, topological Hochschild homology ($THH$) and
its refinement topological cyclic homology ($TC$) have become standard
tools in algebraic topology and algebraic $K$-theory.  Waldhausen
originally conjectured the theory $THH$ and used an ad hoc version of
it to split the algebraic $K$-theory of spaces into stable homotopy
theory and stable pseudo-isotopy theory \cite{WaldII}.  The theory
$TC$ provides the key tool in the proof of the $K$-theoretic Novikov
conjecture by B\"okstedt, Hsiang, and Madsen \cite{BHM}, and in the
algebraic $K$-theory computations pioneered by Hesselholt and
Madsen \cite{HM1,HM2,HM3}.  $K$-theory computations in $TC$ also led
to Ausoni and Rognes' mysterious chromatic red shift phenomenon
\cite{AusoniRognes}.  Partly because of this, recently interest has
grown in iterating $THH$ and $TC$ and in higher versions of $THH$ and
$TC$ studied by Pirashvili \cite{Pirashvili} and by Brun, Carlsson,
Douglas, and Dundas \cite{HigherTC1,HigherTC2}.

Although $THH$ makes sense for any ring or $A_{\infty}$ ring spectrum
$A$, the applications to $K$-theory computations typically take
advantage of the multiplicative structure on $THH(A)$ and in
\cite{AusoniRognes} the extended power operation structure on $THH(A)$
that arises only when $A$ has more structure, for example that of an
$E_{\infty}$ ring spectrum.  Likewise, to iterate $THH$, $THH(A)$ must
have at least an $A_{\infty}$ multiplication. In \cite{MultTHH}, Brun,
Fiedorowicz, and Vogt showed that when $A$ is an $E_{n}$ ring spectrum
$THH(A)$ is an $E_{n-1}$ ring spectrum; in this case $THH$ can be
iterated up to $n$ times, and $THH(A)$ admits certain power
operations.  The construction in \cite{MultTHH} requires replacing $A$ by an
equivalent ring spectrum over a different operad.  As a consequence,
to iterate the construction requires re-approximation at each stage.

This paper describes a construction of $THH$ of $E_{n}$ ring spectra
which is iterable without re-approximation.  Working in one of the
modern categories of spectra, such as the category of EKMM $S$-modules
\cite{ekmm} or the category of symmetric spectra of topological spaces
\cite{hss,MMSS}, we study algebras over the little $n$-cubes operad
$\LC$ of Boardman and Vogt \cite{BV}. In fact, because the output of
our $THH$ construction is not quite a $\LC[n-1]$ algebra, we work with
a mild generalization called a ``partial $\LC$-algebra'', which we
review in Section~\ref{secpart}.  A partial $\LC$-algebra is a partial
$\LC[1]$-algebra by neglect of structure; we construct a cyclic bar
construction version of
$THH$ for partial $\LC[1]$-algebras and prove the following theorem.

\begin{thm}\label{mainthh}
For a partial $\LC$-algebra $A$ satisfying mild technical
hypotheses, 
the cyclic bar construction $THH(A)$ 
is naturally a partial $\LC[n-1]$-algebra. 
\end{thm}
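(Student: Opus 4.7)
\emph{Strategy.} The plan is to exploit the decomposition of the little $n$-cubes operad $\LC$ along the first coordinate. Writing an $n$-cube as a product of a $1$-cube in the first coordinate with an $(n-1)$-cube in the remaining coordinates gives a map of operads from a suboperad of $\LC$---those configurations whose first-coordinate projections have pairwise disjoint interiors---to $\LC[1]\times\LC[n-1]$. A partial $\LC$-algebra $A$ therefore inherits both a partial $\LC[1]$-multiplication from the first coordinate and compatible partial $\LC[n-1]$-operations from the remaining coordinates, and the two families of operations interchange in the appropriate sense because they act in orthogonal coordinate directions.

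\emph{Construction.} I would first use only the partial $\LC[1]$-structure on $A$ to form $THH(A)$ as the geometric realization of a cyclic bar construction $B^{\mathrm{cyc}}(A)_{k}=A^{\sma(k+1)}$, with face and degeneracy maps built from the partial multiplication via a chosen element of $\LC[1](2)$; the ``mild technical hypotheses'' in the statement should be exactly what is needed to guarantee that these structure maps are defined on the appropriate subspaces, so that the simplicial object is well-defined and its realization behaves correctly. Next, I would define a partial $\LC[n-1]$-action on each $B^{\mathrm{cyc}}(A)_{k}$ by letting $d\in\LC[n-1](j)$ act diagonally: on a $j$-tuple of tensors in $A^{\sma(k+1)}$ it acts coordinatewise via the $n$-cubes obtained from $d$ by pairing with a trivial first-coordinate configuration. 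Since the $\LC[n-1]$ cubes used here are orthogonal to the first-coordinate cubes used in the simplicial structure maps, the two actions commute strictly at the simplicial level, so the $\LC[n-1]$-action descends to $THH(A)$ after geometric realization, naturally in $A$.

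\emph{Main obstacle.} The central technical difficulty is the bookkeeping of the partial structure. Because $A$ is only a partial $\LC$-algebra, an expression $d\cdot(x_{1},\ldots,x_{j})$ with $d\in\LC[n-1](j)$ and $x_{i}\in A^{\sma(k+1)}$ is defined only on a subspace of $\LC[n-1](j)\sma(A^{\sma(k+1)})^{\sma j}$ determined by the domain of definition of the partial $\LC$-action on $A$. To finish the argument, one must verify that these subspaces are stable under the simplicial faces and degeneracies of $B^{\mathrm{cyc}}(A)$, that they assemble to a subspace of $\LC[n-1](j)\sma THH(A)^{\sma j}$ large enough to qualify as a partial $\LC[n-1]$-algebra in the sense of Section~\ref{secpart}, and that the operad composition and unit axioms hold on these subspaces. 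I expect this to rest on a careful choice of the notion of partial structure---presumably arranged in Section~\ref{secpart} precisely with this application in mind---so that the orthogonality of coordinate directions passes from the pointwise operad level to the level of domains of definition.
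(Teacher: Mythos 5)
Your high-level interchange idea is correct and matches the paper's: the $\LC[n-1]$-action on $THH(A)$ comes from the ``last coordinates'' embedding $\LC[n-1]\hookrightarrow\LC$, which acts orthogonally to the ``first coordinate'' $\LC[1]$-structure used to build the cyclic bar construction (Definition~\ref{definterchange}, Proposition~\ref{propint}). However, there are two genuine gaps.

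First, the cyclic bar construction requires a \emph{strictly} associative and unital multiplication, but a $\LC[1]$-algebra only has a multiplication associative up to coherent homotopy. Your proposal says the face maps are ``built from the partial multiplication via a chosen element of $\LC[1](2)$,'' but this does not yield a simplicial object: the simplicial identities $d_i d_j = d_{j-1} d_i$ fail, since $\gamma\circ_1\gamma \ne \gamma\circ_2\gamma$ in $\LC[1](3)$. The paper spends all of Section~\ref{secmoore} building the ``Moore partial algebra'' $\MA$ --- a strictly associative partial $R$-algebra obtained by gluing on length parameters, in analogy with the Moore loop space --- and then defines $THH^R(A) := THH^R(\MA)$. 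Without this strictification step (or something playing the same role), the construction does not get off the ground. The delicacy then shifts to verifying that $\MA$ inherits the relevant $\LC[n-1]$-compatibility, which occupies most of Section~\ref{secncyc}.

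Second, your ``main obstacle'' paragraph rests on a misreading of what ``partial algebra'' means here. In the Kriz--May formulation used in the paper (Section~\ref{secpart}), a partial $\gO$-algebra is \emph{not} an algebra whose operations are defined only on a subspace of $\gO(j)_+\sma A^{(j)}$. Rather, the honest smash powers $A^{(m)}$ are replaced by a \emph{partial power system} $\{A_m\}$ --- a sequence of $R$-modules with structure maps $A_{m+n}\to A_m\sma_R A_n$ that are weak equivalences rather than isomorphisms --- and the operad acts on these $A_m$ via everywhere-defined maps $(\gO(j_1)\times\dotsb\times\gO(j_r))_+\sma_{\Sigma_{j_1}\times\dotsb\times\Sigma_{j_r}}A_m\to A_r$. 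So there is no issue of ``domains of definition being stable under faces and degeneracies.'' Instead the real bookkeeping question, which your proposal does not address, is what the $m$-th partial power of $THH(A)$ should even be. The paper's answer is $(THH^R(A))_m := THH^R(\MoAlg{A^{[m]}})$, where $A^{[m]}$ is the reindexed power system $A^{[m]}_p = A_{pm}$ (Definition~\ref{defppspower}, Construction~\ref{consthhps}). Getting a $\Sigma_m$-action and structure maps $\lambda_{p,q}$ on these, and checking they form a partial power system on which the interchange-induced $\LC[n-1]$-maps are equivariant and compatible with the $\lambda$'s, is where the actual work lies.
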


The mild technical hypotheses amount to the partial algebra
generalization of the usual hypothesis on algebras that the inclusion
of the unit is a cofibration of the underlying $S$-modules (or
symmetric or orthogonal spectra).  We write out the hypotheses explicitly
as Hypothesis~\ref{hypprop} below and show in
Proposition~\ref{propitprop} that it is inherited on $THH(A)$,
allowing iteration.

When working in the context of symmetric spectra, there are two
different constructions of $THH$ of an $S$-algebra: A cyclic bar
construction as in the previous theorem and the original construction
of B\"okstedt.  Because only
B\"okstedt's construction is known to be suitable for constructing
$TC$, the previous theorem still leaves open the problem of directly
constructing an iterable version of $TC$ from an $E_{n}$ ring structure;
the authors plan to return to this question in a future paper.  On the
other hand, the cyclic bar construction does
admit a relative variant for partial $\LCR$-algebras (see
Definition~\ref{defpart}), where we use a different base commutative
$S$-algebra $R$ in place of the sphere spectrum $S$.

\begin{thm}\label{mainrel}
For $R$ a commutative
$S$-algebra, and $A$ a partial $\LCR$-algebra satisfying
Hypothesis~\ref{hypprop}, $THH^{R}(A)$ is a 
partial $\LCR[n-1]$-algebra.
\end{thm}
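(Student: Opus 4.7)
The plan is to deduce Theorem~\ref{mainrel} from Theorem~\ref{mainthh} by observing that the entire construction of $THH$ and of its partial operadic structure can be carried out internally in any sufficiently nice closed symmetric monoidal category of spectra.  The category of $R$-modules with smash product $\wedge_{R}$ is such a category, and a partial $\LCR$-algebra is by definition a partial $\LC$-algebra in the category of $R$-modules.

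First, I would reinterpret the relative cyclic bar construction $THH^{R}(A)$ as the analogue of the cyclic bar construction with all smash products replaced by smash products over $R$; this is sensible since $A$ is an $R$-bimodule and its multiplication factors through $A\wedge_{R} A$.  Next, I would revisit the proof of Theorem~\ref{mainthh} and pinpoint exactly what features of the ambient symmetric monoidal category were used.  The construction of the partial $\LC[n-1]$-algebra structure on $THH(A)$ proceeds by using the geometry of little $n$-cubes (decomposing the $n$-th coordinate direction and viewing an $n$-cube as a little $(n-1)$-cube times a little $1$-cube) to act through the simplicial/cyclic structure of the bar construction.  This argument uses only the symmetric monoidal product, the simplicial structure of the cyclic nerve, and the given partial $\LC$-action on $A$; nothing is specific to smash products over $S$.

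To propagate the hypotheses, I would verify that Hypothesis~\ref{hypprop} translates without change once cofibrations are interpreted in $R$-modules, so that the analogue of Proposition~\ref{propitprop} holds for partial $\LCR$-algebras.  Since that proposition is in turn essentially formal in the underlying symmetric monoidal category, the iteration machinery transfers to the relative setting without modification.

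The main obstacle I expect is the point-set verification that the category of $R$-modules supplies the required structure: the coequalizers defining $\wedge_{R}$ and the iterated smash powers $A^{\wedge_{R}(p+1)}$ must interact correctly with the bar-construction coends and the operadic action gadgets used in the proof of Theorem~\ref{mainthh}, and in the EKMM framework some care is needed with unit issues and cofibrancy of $R$.  Once these point-set compatibilities are confirmed, the relative theorem follows by running the proof of Theorem~\ref{mainthh} verbatim in $R$-modules, with no new geometric or operadic input required.
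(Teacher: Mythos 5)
Your proposal is correct and is essentially the paper's argument, but with the logic inverted: the paper sets up everything (Definition~\ref{defpps} through the Moore construction, Construction~\ref{consthhps}, Theorem~\ref{thmmainrel}, and Proposition~\ref{propitprop}) over a general commutative $S$-algebra $R$ from the start, proves Theorem~\ref{mainrel} directly, and then states Theorem~\ref{mainthh} as the special case $R=S$, rather than proving the $S$-case first and re-running the argument. The point-set issues you flag are handled exactly as you anticipate: Hypothesis~\ref{hypprop} and Proposition~\ref{propitprop} are stated for $R$-modules, with the generating set $\oI'$ enlarged by the maps $R\sma S^{j-1}_{+}\to R\sma B^{j}_{+}$ in the EKMM setting.
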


The relative construction also admits a reduced version for augmented
algebras, which amounts to taking coefficients in $R$.  This is the
analogue of the bar construction of an augmented algebra, so we write
$B^{R}(A)$ or $BA$ rather than $THH^{R}(A;R)$.  For a partial augmented
$\LCR$-algebra $A$, $BA$ is a partial augmented $\LCR[n-1]$-algebra,
and so we can iterate the bar construction up to $n$ times.  Up to a
shift, the fiber of the augmentation $B^{n}A\to R$ is the
$R$-module of $\LCR$-algebra derived 
indecomposables representing reduced topological Quillen homology (see
Section~\ref{sectqh} for a review of this theory).

\begin{thm}\label{maintq}
For an augmented $\LCR$-algebra $A$, there is a natural
isomorphism in the derived category of $R$-modules 
\[
R\vee \Sigma^{n}\TQ(A)\iso B^{n}A,
\]
where $\TQ$ denotes the $\LCR$-algebra cotangent complex at the
augmentation (the derived $\LCR$-algebra indecomposables).
\end{thm}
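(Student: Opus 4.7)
The plan is to induct on $n$, using the augmented, reduced form of Theorem~\ref{mainrel} to promote $BA$ to a partial augmented $\LCR[n-1]$-algebra at each stage, and Proposition~\ref{propitprop} to propagate Hypothesis~\ref{hypprop}. The crux is a ``shift lemma'' identifying the derived $\LCR[n-1]$-algebra indecomposables of $BA$ with the suspension of the derived $\LCR$-algebra indecomposables of $A$.

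The base case $n=1$ is the classical identification $BA\iso R\vee\Sigma\TQ(A)$ for an augmented partial $\LCR[1]$-algebra. Under Hypothesis~\ref{hypprop}, the two-sided bar construction $|B\subdot(R,A,R)|$ computes the derived smash product $R\sma_{A}^{\ld}R$; its augmentation ideal is identified with a shift of the derived $\LCR[1]$-algebra indecomposables by the standard bar-spectral-sequence argument for augmented associative algebras.

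For the inductive step $n\geq 2$, let $A$ be a partial augmented $\LCR$-algebra satisfying Hypothesis~\ref{hypprop}. By the augmented form of Theorem~\ref{mainrel}, $BA$ is a partial augmented $\LCR[n-1]$-algebra, and Proposition~\ref{propitprop} shows that it inherits Hypothesis~\ref{hypprop}; the inductive hypothesis applied to $BA$ then yields
\[
B^{n}A \;=\; B^{n-1}(BA) \;\iso\; R\vee\Sigma^{n-1}\,Q^{\ld}_{\LC[n-1]}(BA).
\]
The theorem for $A$ therefore reduces to the shift lemma
\[
Q^{\ld}_{\LC[n-1]}(BA) \;\iso\; \Sigma\,\TQ(A).
\]
I would prove the shift lemma by verifying it on free $\LCR$-algebras and extending via simplicial resolution. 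On a free augmented $\LCR$-algebra $FX$, $\TQ(FX)\iso X$ tautologically; the cyclic bar construction $BFX$ admits an explicit description via configuration spaces of little $n$-cubes in $S^{1}\times\bR^{n-1}$ labelled by $X$, and one identifies $BFX$ in the derived category with the free $\LCR[n-1]$-algebra on $\Sigma X$. This is the stable, infinitesimal shadow of the classical equivalence $B\Omega^{n}\Sigma^{n}Y\homeq \Omega^{n-1}\Sigma^{n-1}(\Sigma Y)$, and the derived $\LCR[n-1]$-algebra indecomposables of a free $\LCR[n-1]$-algebra on $\Sigma X$ are $\Sigma X$. For general $A$, resolve $A\homeq |F\subdot X|$ by the standard free simplicial resolution of $\LCR$-algebras; both sides of the shift lemma commute with realization of proper simplicial diagrams satisfying Hypothesis~\ref{hypprop}, so the free-algebra equivalence descends.

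The principal obstacle is the free-algebra case of the shift lemma inside the partial-$\LCR$-algebra framework of Section~\ref{secpart}: one must produce a point-set configuration-space model of $BFX$ compatible both with the cyclic simplicial structure defining $B$ and with the residual partial $\LCR[n-1]$-action from Theorem~\ref{mainrel}, and then identify it with the free $\LCR[n-1]$-algebra on $\Sigma X$ in a $\Sigma_{k}$-equivariantly coherent way. A secondary technical point is confirming the identification $BA\iso R\sma_{A}^{\ld}R$ in the partial-$\LCR[1]$-algebra setting, which underpins the base case.
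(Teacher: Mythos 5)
Your outline is sound but takes a genuinely different route from the paper's proof. You induct on $n$ at the algebra level, with each inductive step reduced to a ``shift lemma'' $Q^{\ld}_{\LC[n-1]}(BA)\iso \Sigma\,\TQ(A)$; the paper never formulates or proves such a lemma. Instead, working in the non-unital setting via the Quillen equivalence of Theorem~\ref{thmaugqe}, the paper constructs a single explicit natural map $\tB^{n}N\to \Sigma^{n}QN$ (for all $n$ at once, via the description~\eqref{eqconsitbar}), reduces once-and-for-all to free algebras by the monadic bar resolution $B(\btNC,\btNC,N)$, decomposes $\tB^{n}\btNC X$ into homogeneous $\Sigma_{m}$-pieces $\SSB(m)$, reduces to finite sets $X$, transfers to the space level via $B^{n}(R\sma Z_{+})\iso R\sma(B^{n}Z)_{+}$, and finally proves the space-level Proposition~\ref{propdeloop} by induction on the number of bar constructions using group completion, May's classifying-space zigzag~\eqref{eqmayclass}, and the free reduced $\LC[j]$-space functor $C_{j}$. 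Your approach buys conceptual modularity and aligns with the non-iterative treatments of Francis and Lurie that the paper cites; the iteration mechanism you invoke (Theorem~\ref{mainrel} and Proposition~\ref{propitprop}) is exactly the paper's, and the resolution and commutation-with-realization steps are the same devices the paper uses. But you underestimate the weight of the free-algebra case: to compute $Q^{\ld}_{\LC[n-1]}(B\btLC X)$ you must identify $B\btLC X$ as an $\LCR[n-1]$-\emph{algebra} (indecomposables are not an invariant of the underlying $R$-module), i.e.\ you need a form of $E_{n}$ Koszul self-duality. This is comparable in depth to the theorem itself, and the final remark of Section~\ref{sectqh} explicitly notes that translating into a framework where such a recognition result applies is more complicated than the direct argument given. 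By contrast, the paper's approach only ever needs the homotopy type of the $\Sigma_{m}$-spaces $\SSB(m)$ and the equivalence of the one map it has constructed, sidestepping any characterization of $BA$ as an $E_{n-1}$-algebra. Your sketch (configuration spaces labelled by $X$) is the right raw material, but the coherence with both the simplicial structure and the residual partial $\LCR[n-1]$-action would in effect reprove Proposition~\ref{propdeloop} with its multiplicative structure attached, which is the hard part you have flagged as an ``obstacle.''
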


We regard the previous result as the main theorem in this paper: It
allows an inductive approach to obstruction theory 
for connective $E_{n}$ ring spectra.  We explain this idea and apply it
in a future paper to prove that $BP$ is an $E_{4}$ ring spectrum.
Other non-iterative versions of Theorem~\ref{maintq} can be found
in~\cite{FrancisThesis} and~\cite{DAGVI}. An algebraic version can be
found in \cite{FresseItBar}.

\subsection*{Terminology}

Because partial $\gO$-algebras (for various $\gO$) play a fundamental
role in the structure and results of the paper, we will sometimes use the
terminology ``true $\gO$-algebra'' (for $\gO$-algebras in the usual
sense) when necessary for emphasis, for contrast, or to avoid confusion
with the terminology ``partial $\gO$-algebra''.

\subsection*{Outline}

Section~\ref{secpart} reviews the definition of a partial algebra over
an operad and the Kriz-May rectification theorem, which gives an
equivalence between the homotopy category of partial algebras and the
homotopy category of true algebras over an operad.
Sections~\ref{secmoore} and~\ref{secncyc} construct the cyclic
nerve of a partial $\LCR[1]$-algebra, breaking the construction into
two steps.  For a $\LCR[1]$-algebra $A$, we construct in Section~\ref{secmoore}
a closely related partial associative $R$-algebra, called
the ``Moore'' partial algebra, which has the same relationship to
$A$ as the Moore loop space has to the loop space.  In
Section~\ref{secncyc}, we construct the cyclic nerve of a partial
associative $R$-algebra and study its multiplicative structure when
applied to  the Moore algebra of a $\LCR[n]$-algebra, proving
Theorems~\ref{mainthh} and ~\ref{mainrel} above.  We produce the
iterated bar construction for augmented partial 
$\LCR$-algebras in Section~\ref{secbar}
and for non-unital $\LCR$-algebras in Section~\ref{secbar2}.
Section~\ref{sectqh} proves Theorem~\ref{maintq}.  Finally,
Section~\ref{secopn} proves two compatibility results for the
$E_{n-1}$-structure on the bar construction: it shows that the usual
diagonal map on the bar 
construction preserves the multiplication (Theorem~\ref{thmhopf}) and
that the bar construction has the expected behavior with respect to
power operations (Theorem~\ref{thmopn}).

\section{Partial Operadic Algebras}
\label{secpart}

In this section, we give a brief review of the definition and basic
homotopy theory of partial algebras over an operad.  We review the
Kriz-May rectification theorem, which shows that any partial
algebra over an appropriate operad is naturally weakly
equivalent to a true algebra over that same operad. This in
particular shows how to recover an operadic algebra from a partial
operadic algebra, gives an equivalence of the homotopy theory of
partial operadic algebras and of true operadic algebras, and justifies
the perspective of the statements of the main theorems in Section~1.

In this section and throughout this paper, we work in one of the
modern categories of spectra of \cite{MMSS} or \cite{ekmm}.  We write
$\aM_{S}$ for any of these categories, and refer to an object in it as
an ``$S$-module''; we write $\sma_{S}$ for the smash product,
reserving $\sma$ for the smash product with a based space.  For a
commutative $S$-algebra $R$, we have the category of $R$-modules
$\aM_{R}$ that has a symmetric monoidal product $\sma_{R}$.  We avoid
needless redundancy by typically working in $\aM_{R}$; the case of
$S$-modules being precisely the special case $R=S$.  

In our cases of interest, we work with the little $n$-cubes operads
$\LC$.  With this in mind, we fix an operad $\gO$ in spaces such that
each $\gO(m)$ is a free $\Sigma_{m}$-CW complex.  Then in
our context, a $\gO$-algebra (or 
$\alg{\gO}{R}$-algebra when $R$ needs to be made explicit) consists of
an $R$-module $A$ and maps 
\[
\gO(m)_{+}\sma_{\Sigma_{m}} (A\sma_{R}\dotsb \sma_{R}A)\to A
\]
satisfying certain associativity and unit properties.  A partial
$\gO$-algebra replaces the smash powers with an equivalent system of
$R$-modules.

\begin{defn}\label{defpps}
An \term{op-lax power system} of $R$-modules consists of 
\begin{enumerate}
\item A sequence of $R$-modules $X_{1}, X_{2},\dotsc$,
\item A $\Sigma_{m}$ action on $X_{m}$, and
\item A $\Sigma_{m}\times \Sigma_{n}$-equivariant map
$\lambda_{m,n}\colon X_{m+n}\to X_{m}\sma_{R}X_{n}$ for each $m,n$
\end{enumerate}
such that the following diagrams commute for all $m,n,p$, where $\tau_{m,n}\in \Sigma_{m+n}$ denotes the permutation that switches
the first block of $m$ past the last block of $n$.
\[
\xymatrix{%
X_{m+n}\ar[r]^-{\lambda_{m,n}}\ar[d]_{\tau_{m,n}}
&X_{m}\sma_{R}X_{n}\ar[d]^{\tau}&
X_{m+n+p}\ar[r]^-{\lambda_{m,n+p}}\ar[d]_{\lambda_{m+n,p}}
&X_{m}\sma_{R}X_{n+p}\ar[d]^{\lambda_{n,p}}\\
X_{n+m}\ar[r]_-{\lambda_{n,m}}&
X_{n}\sma_{R}X_{m}&
X_{m+n}\sma_{R}X_{p}\ar[r]_-{\lambda_{m,n}}
&X_{m}\sma_{R}X_{n}\sma_{R}X_{p}
}
\]
We write $\lambda_{m,n,p}$ for the common composite in the righthand
diagram, and more 
generally, $\lambda_{m_{1},\dotsc,m_{r}}$ for the iterated composites 
\[
X_{m_{1}+\dotsb+m_{r}}\to X_{m_{1}}\sma_{R}\dotsb \sma_{R}X_{m_{r}}.
\]
We write $X_{0}=R$ and take $\lambda_{0,n}$ and $\lambda_{n,0}$ to be
the (inverse) unit isomorphisms.  A map of op-lax power systems $A\to
B$ consists of equivariant maps $X_{n}\to Y_{n}$, which make the
evident diagrams in the structure maps $\lambda_{m,n}$ commute. An
op-lax power system is a 
\term{partial power system} when the maps
$\lambda_{m_{1},\dotsc,m_{r}}$ are weak equivalences for all
$m_{1},\dotsc,m_{r}$.
\end{defn}

\begin{example}
For an $R$-module $X$, we get a partial power system $X_{m}=X^{(m)}$
($m$-th smash power over $R$ 
for some fixed association) with the usual symmetric group actions and
the maps $\lambda_{m,n}$ the associativity isomorphism.  We call such
a partial power system a \term{true power system}.
\end{example}

This definition depends strongly on the underlying symmetric
monoidal category $\aM_{R}$: An op-lax power system in $R$-modules
does not have an underlying op-lax power system in $S$-modules.
Definition~\ref{defpps} provides the appropriate framework for the
cyclic bar construction of $THH$.  (A version suitable for the
B\"okstedt construction of $THH$ requires an ``external'' smash
product formulation that is significantly more complex.)

We define a \term{weak equivalence} of partial power systems as a map
$X\to Y$ that is a weak equivalence $X_{1}\to Y_{1}$.  This
compensates for the awkward fact that the smash product of $R$-modules
does not preserve all weak equivalences.  To help alleviate this
difficulty, we introduce the following additional terminology. 

\begin{defn}\label{deftidy}
We say that a partial power system $X$ is \term{tidy} when the canonical
maps
\[
X_{1}\sma_{R}^{\ld}\dotsb \sma_{R}^{\ld}X_{1}\to 
X_{1}\sma_{R}\dotsb\sma_{R}X_{1}
\]
from the derived smash powers to the point-set smash powers of $X_{1}$
are isomorphisms in the derived category  $\aD_{R}$.
\end{defn}

We note that a partial power system $X$ is tidy if and only
if the natural maps
\[
X_{j_{1}}\sma_{R}^{\ld}\dotsb \sma_{R}^{\ld}X_{j_{r}}\to 
X_{j_{1}}\sma_{R}\dotsb\sma_{R}X_{j_{r}}
\]
are all isomorphisms in $\aD_{R}$.  To see this, consider the
following commutative diagram in $\aD_{R}$, where 
$j=\sum j_{i}$.
\[
\xymatrix@R-1pc@C-2pc{%
X_{j_{1}}\sma_{R}^{\ld}\dotsb \sma_{R}^{\ld}X_{j_{r}}\ar[rr]
\ar[dd]_{\lambda_{1,\dotsc,1}\sma_{R}^{\ld}\dotsb \sma_{R}^{\ld}\lambda_{1,\dotsc,1}}^{\sim}
&&X_{j_{1}}\sma_{R}\dotsb\sma_{R}X_{j_{r}}
\ar[dd]^{\lambda_{1,\dotsc,1}\sma_{R}\dotsb \sma_{R}\lambda_{1,\dotsc,1}}\\
&X_{j}\ar[ur]^(.3){\lambda_{j_{1},\dotsc j_{r}}}_-{\sim}
\ar[dr]_(.3){\lambda_{1,\dotsc,1}}^-{\sim}\\
(X_{1}\sma_{R}\dotsb \sma_{R}X_{1})\sma_{R}^{\ld}\dotsb \sma_{R}^{\ld} 
(X_{1}\sma_{R}\dotsb \sma_{R}X_{1})\ar[rr]
&&X_{1}\sma_{R}\dotsb \sma_{R} X_{1}
}
\]
The maps labelled $\sim$ are isomorphisms in $\aD_{R}$ by the definition
of partial power system, and so we see that the vertical maps are
isomorphisms in $\aD_{R}$. It follows that 
the top horizontal map is an isomorphism in $\aD_{R}$ for all
$j_{1},\dotsc,j_{r}$ exactly when the bottom horizontal map is an
isomorphism in $\aD_{R}$ for all $j_{1},\dotsc,j_{r}$.  Looking at the
diagram
\[
\xymatrix{%
(X_{1}\sma_{R}\dotsb \sma_{R}X_{1})\sma_{R}^{\ld}\dotsb \sma_{R}^{\ld} 
(X_{1}\sma_{R}\dotsb \sma_{R}X_{1})\ar[r]
&X_{1}\sma_{R}\dotsb \sma_{R} X_{1}\\
(X_{1}\sma_{R}^{\ld}\dotsb \sma_{R}^{\ld}X_{1})
\sma_{R}^{\ld}\dotsb \sma_{R}^{\ld} 
(X_{1}\sma_{R}^{\ld}\dotsb \sma_{R}^{\ld}X_{1})\ar[u]\ar[r]_-{\iso}
&X_{1}\sma_{R}^{\ld}\dotsb \sma_{R}^{\ld} X_{1}\ar[u]
}
\]
we see that this happens exactly when $X$ is tidy.

Intuitively, tidy means that the $m$-th partial power $X_{m}$
is equivalent to the derived $m$-th smash power of $X_{1}$.  True
power systems need not be tidy in general, but if $X$ is cofibrant or
if we are working in the category of symmetric spectra, orthogonal
spectra, or EKMM $S$-modules and $X$ is cofibrant in the category of
$\gO$-algebras (for $\gO$ as above) or even cofibrant in the category
of commutative $R$-algebras, then the true power system $X_{m}=X^{m}$ 
is tidy.

To define a partial $\gO$-algebra structure on a partial power system
$A$, we need slightly more than the sequence of maps 
\[
\gO(m)_{+}\sma_{\Sigma_{m}}A_{m}\to A_{1},
\]
that suffices for a true $\gO$-algebra; rather, we need maps of the form
\[
(\gO(j_{1})\times\dotsb \times \gO(j_{r}))_{+}
\sma_{\Sigma_{j_{1}}\times \dotsb \times \Sigma_{j_{r}}}
A_{m}\to A_{r}
\]
for $j_{1}+\dotsb+j_{r}=m$.  The coherence is most easily expressed by
generalizing the monadic description of operadic algebras.

\begin{cons}\label{consmonad}
For an op-lax power system $X$, let 
\[
(\bgO X)_{m}=\bigvee_{j_{1},\dotsc,j_{m}}
(\gO(j_{1})\times \dotsb \times \gO(j_{m}))_{+}
\sma_{\Sigma_{j_{1}}\times \dotsb \times \Sigma_{j_{m}}}
X_{j_{1}+\dotsb+j_{m}}.
\]
This obtains a $\Sigma_{m}$-action by permuting the $j_{i}$'s and
performing the corresponding block permutation on
$X_{j_{1}+\dotsb+j_{m}}$. 
We make $\bgO X$ an op-lax power system using the structure maps
$\lambda$ of $X$ and the associativity isomorphism
\begin{multline*}
\bigvee_{j_{1},\dotsc,j_{m+n}}
(\gO(j_{1})\times \dotsb \times \gO(j_{m+n}))_{+}
\sma_{\Sigma_{j_{1}}\times \dotsb \times \Sigma_{j_{m+n}}}
(X_{j_{1}+\dotsb+j_{m}} \sma_{R} X_{j_{m+1}+\dotsb +j_{m+n}})
\\
\iso (\bgO X)_{m}\sma_{R}(\bgO X)_{n}.
\end{multline*}
\end{cons}

We have a natural map of op-lax power systems $X\to \bgO X$ induced by
the inclusion of the identity element in $\gO(1)$.  We have a natural
transformation of op-lax power systems
\[
\bgO \bgO X \to \bgO X
\]
induced by the operadic multiplication on $\gO$.  An easy check of
diagrams then proves the following proposition.

\begin{prop}
The structure above makes $\bgO$ a monad in the category of op-lax
power systems.
\end{prop}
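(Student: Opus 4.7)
The plan is to unpack the three separate claims hidden in the proposition: that $\bgO$ gives a well-defined endofunctor of op-lax power systems, that the unit $\eta\colon X\to\bgO X$ and multiplication $\mu\colon\bgO\bgO X\to\bgO X$ are maps of op-lax power systems, and that the monad associativity and unit axioms hold. Each of these is proved by a diagram chase, which, as the authors indicate, is routine; the task is to arrange the bookkeeping.

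First I would verify functoriality of $\bgO$. The $\Sigma_m$-action on $(\bgO X)_m$ is defined by permuting the wedge summands and applying the corresponding block permutation $\tau$ on $X_{j_1+\dotsb+j_m}$; compatibility of this with the defining relations of $\Sigma_m$ follows from the first of the two commuting squares in Definition~\ref{defpps}, iterated across blocks. The structure maps $\lambda_{m,n}$ on $\bgO X$ are essentially the identity on the operad factor combined with $\lambda_{j_1+\dotsb+j_m,\,j_{m+1}+\dotsb+j_{m+n}}$ on $X$; their $\Sigma_m\times\Sigma_n$-equivariance and the two coherence squares reduce directly to the equivariance and the coherence squares for $\lambda$ on $X$. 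Functoriality in maps of op-lax power systems is immediate.

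Next I would check that $\eta$ and $\mu$ are maps of op-lax power systems. For $\eta$, one includes the identity $1\in\gO(1)$ in each spot; the required diagrams with the $\lambda_{m,n}$ maps commute trivially because the unit isomorphism $X_{j_1+\dotsb+j_m}\iso (\gO(1)\times\dotsb\times\gO(1))_+\sma_{\Sigma_1\times\dotsb\times\Sigma_1}X_{j_1+\dotsb+j_m}$ already lives inside $(\bgO X)_m$. For $\mu$, one uses the associativity isomorphism built into $(\bgO\bgO X)_m$ together with the operadic composition $\gamma$ of $\gO$ to land in $(\bgO X)_m$; equivariance follows from equivariance of $\gamma$ and of the block-permutation actions, and compatibility with $\lambda_{m,n}$ reduces to the fact that the associativity isomorphism of the smash product and the coherence of $\lambda$ on $X$ match up the two ways of regrouping the indices $(j_{i,k})$.

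Finally, the monad axioms $\mu\circ\bgO\mu=\mu\circ\mu\bgO$ and $\mu\circ\bgO\eta=\id=\mu\circ\eta\bgO$ reduce, summand by summand in the wedge decomposition of $\bgO$, to the operad axioms for $\gO$: associativity of operadic composition $\gamma$ and the two-sided unit property of $1\in\gO(1)$. The power-system structure maps $\lambda$ play no role here beyond being transported faithfully through the construction. The main obstacle, such as it is, is purely notational: correctly tracking the triple-indexed families of arities and the block permutations of $\Sigma$-actions that arise when comparing $(\bgO\bgO X)_m$ and $(\bgO X)_m$. Once a consistent indexing convention is fixed, every diagram collapses to an instance of an operad axiom or a coherence axiom for $\lambda$.
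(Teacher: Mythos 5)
Your proposal is correct and matches the approach the paper tacitly invokes: the paper defers the verification to ``an easy check of diagrams,'' and you have simply unpacked that check into the standard three parts (functoriality, that $\eta$ and $\mu$ are maps of op-lax power systems, and the monad axioms), each reducing to an operad axiom or a coherence axiom for $\lambda$. No gaps; the one small imprecision is that the $\Sigma_m$-action on $(\bgO X)_m$ is a group action on the nose (block permutations compose as expected), whereas the first square in Definition~\ref{defpps} is what you actually need later to check $\Sigma_m\times\Sigma_n$-equivariance of the induced $\lambda_{m,n}$.
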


Because we have assumed that each $\gO(m)$ is a free $\Sigma_{m}$-CW
complex, the smash products 
\[
(\gO(j_{1})\times \dotsb \times \gO(j_{m}))_{+}
\sma_{\Sigma_{j_{1}}\times \dotsb \times \Sigma_{j_{m}}}(-)
\]
preserve weak equivalences.  In particular, we see that
when $X$ is a partial power system, so is $\bgO X$.  Thus, we obtain
the following proposition.

\begin{prop}\label{propmonad}
Under the assumptions on $\gO$ above, $\bgO$ is a monad in the
category of partial power systems.
\end{prop}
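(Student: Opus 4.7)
The plan is to bootstrap off the preceding proposition, which establishes that $\bgO$ is a monad on the full category of op-lax power systems. Since partial power systems form a full subcategory of op-lax power systems (the defining condition constrains only the objects, not the morphisms), the unit $X\to\bgO X$ and multiplication $\bgO\bgO X\to\bgO X$ automatically restrict, and the monad identities transfer for free, once we verify that $\bgO$ sends partial power systems to partial power systems. The entire content of the proposition therefore reduces to this closure property.

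To check closure, I would unpack the structure maps $\lambda_{m_{1},\dotsc,m_{r}}^{\bgO X}$. Inspecting Construction~\ref{consmonad} and distributing smash products over wedges, one identifies $(\bgO X)_{m_{1}}\sma_{R}\dotsb\sma_{R}(\bgO X)_{m_{r}}$ with a wedge, indexed by tuples $(j_{1},\dotsc,j_{m})$ with $m=m_{1}+\dotsb+m_{r}$, whose summands agree with those of $(\bgO X)_{m}$ except that $X_{j_{1}+\dotsb+j_{m}}$ is replaced by $X_{s_{1}}\sma_{R}\dotsb\sma_{R}X_{s_{r}}$, where $s_{i}$ collects the appropriate block of $j$'s. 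Under this identification, $\lambda_{m_{1},\dotsc,m_{r}}^{\bgO X}$ is the wedge, one summand for each $j$-tuple, of the maps
\[
(\gO(j_{1})\times\dotsb\times\gO(j_{m}))_{+}\sma_{\Sigma_{j_{1}}\times\dotsb\times\Sigma_{j_{m}}}\lambda_{s_{1},\dotsc,s_{r}}^{X}.
\]

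Because $X$ is partial, each internal $\lambda_{s_{1},\dotsc,s_{r}}^{X}$ is a weak equivalence, and the freeness and CW hypothesis on each $\gO(k)$, as already noted in the remark immediately preceding the proposition, makes $(\gO(j_{1})\times\dotsb\times\gO(j_{m}))_{+}\sma_{\Sigma_{j_{1}}\times\dotsb\times\Sigma_{j_{m}}}(-)$ a homotopical functor on $\aM_{R}$. Wedges of weak equivalences are weak equivalences, so $\lambda_{m_{1},\dotsc,m_{r}}^{\bgO X}$ is a weak equivalence, completing the closure check. The argument is essentially formal; the only point of substance is the homotopical well-behavior of the indexed smash construction, and this is standard in each of the spectrum categories of~\cite{MMSS} and~\cite{ekmm}, so no genuine obstacle should arise.
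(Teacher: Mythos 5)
Your proposal is correct and follows the same route as the paper: reduce to the preceding proposition (monad on op-lax power systems), observe that partial power systems form a full subcategory, and verify the closure property that $\bgO$ sends partial power systems to partial power systems using the free $\Sigma_m$-CW hypothesis on $\gO(m)$. The paper states this closure check in one sentence; you have unpacked the same argument more explicitly, but the content is identical.
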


Note that for a true power system $X_{m}=X^{(m)}$, we have 
\[
(\bgO X)_{m}\iso (\bgO X)_{1}^{(m)}=(\bgsO X)^{(m)},
\]
where $\bgsO$ is the usual monad in $R$-modules associated to the
operad $\gO$.
The monad $\bgO$ therefore generalizes to partial power systems the
monad of $R$-modules associated to the operad $\gO$.
We now have the following monadic definition of a partial
$\gO$-algebra, generalizing the monadic definition of a true
$\gO$-algebra. 

\begin{defn}\label{defpart}
A \term{partial $\gO$-algebra} (or \term{partial
$\alg{\gO}{R}$-algebra} when $R$ needs to be made explicit) is an
algebra over the monad $\bgO$ in partial power systems.
\end{defn}

In particular, a true $\alg{\gO}{R}$-algebra structure on $A$ is
precisely a partial $\alg{\gO}{R}$-algebra structure on the true power system
$A_{m}=A^{(m)}$.
We could define a more general notion of op-lax $\gO$-algebra in terms
of the op-lax power systems; we learned from Leinster that such an
algebra is precisely a strictly unital op-lax symmetric monoidal
functor from an appropriate 
category of operators associated to $\gO$.  A formulation of the
theory of partial $\gO$-algebras along these lines as well as
generalizations and further examples may be found in Leinster's
preprint \cite{LeinsterPartial}.

The remainder of the section discusses and reviews the proof of the
following theorem, the Kriz-May rectification theorem.  In it, we
write $T$ for the functor from $R$-modules to partial power systems
that takes an $R$-module $X$ to the true power system
$(TX)_{m}=X^{(m)}$. 

\begin{thm}[May \cite{MayMult}, Kriz-May \cite{kmbook}]
\label{thmrect}
For an operad $\gO$ as above, there exists a functor $R$ from partial
$\gO$-algebras to true $\gO$-algebras, an endofunctor $E^{\sharp}$ on
partial $\gO$-algebras, and natural weak equivalences of partial $\gO$-algebras
\[
\Id \from E^{\sharp}\to TR.
\]
Moreover, there exists an endofunctor $E$ on $\gO$-algebras and
natural weak equivalences of $\gO$-algebras
\[
\Id\from E \to RT.
\]
The functors and natural transformations are natural also in the
operad $\gO$.
\end{thm}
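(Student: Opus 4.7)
My plan is to follow the two-sided bar construction strategy of \cite{MayMult}, adapted from its classical operadic setting to the present partial power system setting. The pivotal observation is that $T$ is compatible with the two monads in a strong sense: for any $R$-module $Y$ we have
\[
(\bgO TY)_{m}\iso \bigvee_{j_{1},\dotsc,j_{m}}(\gO(j_{1})\times\dotsb\times\gO(j_{m}))_{+}\sma_{\Sigma_{j_{1}}\times\dotsb\times\Sigma_{j_{m}}}(Y^{(j_{1}+\dotsb+j_{m})})\iso (\bgsO Y)^{(m)}=(T\bgsO Y)_{m},
\]
i.e.\ $\bgO T\iso T\bgsO$ as monads, and for a general partial power system $X$ the structure maps $\lambda_{1,\dotsc,1}$ assemble into a natural comparison $\phi_{X}\colon \bgO X\to T\bgsO X_{1}$ which is a weak equivalence on 1-components. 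This pair of compatibilities is exactly what is needed to form two-sided bar constructions passing between the monads $\bgO$ and $\bgsO$.

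The first step is to construct the endofunctor $E^{\sharp}$ on partial $\gO$-algebras as the realization of the standard monadic resolution, $E^{\sharp}A=|[p]\mapsto \bgO^{p+1}A|$, taken in partial power systems. The usual extra-degeneracy argument produces the natural weak equivalence $\varepsilon\colon E^{\sharp}A\to A$. For the rectification $R$, I would set
\[
RA=\bigl|[p]\mapsto \bgsO\bigl((\bgO^{p}A)_{1}\bigr)\bigr|,
\]
realized as a simplicial $R$-module: the inner face maps use the multiplication on $\bgO$ (and $\phi$ where $\bgsO$ meets $\bgO$), the degeneracies use the units, and the outer $\bgsO$ equips $RA$ with a canonical true $\gO$-algebra structure. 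The comparison map $E^{\sharp}A\to TRA$ is obtained by applying $\phi$ and the isomorphism $\bgO T\iso T\bgsO$ levelwise.

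For the second half of the theorem, $E$ is just the classical cotriple resolution $EA=|\bgsO^{\bullet+1}A|$ with its augmentation $EA\to A$, which is a weak equivalence by the same extra-degeneracy argument in $R$-modules. The comparison $E\to RT$ comes from the levelwise identification $(\bgO^{p}TA)_{1}\iso \bgsO^{p}A$ obtained by iterating $\bgO T\iso T\bgsO$; under this identification the two simplicial objects literally agree, so realizations are naturally isomorphic. Naturality in $\gO$ is automatic because every ingredient is functorial in the operad.

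The main obstacle, and the reason the freeness hypothesis on the $\gO(m)$ enters, is showing that the levelwise weak equivalences (both the comparison $E^{\sharp}A\to TRA$ at each simplicial degree, and the augmentations producing $\varepsilon$) survive geometric realization. This is the standard properness/Reedy-cofibrancy verification: the free $\Sigma_{m}$-CW structure on $\gO(m)$ ensures that the functors $(\gO(j_{1})\times\dotsb\times\gO(j_{m}))_{+}\sma_{\Sigma_{j_{1}}\times\dotsb\times\Sigma_{j_{m}}}(-)$ preserve weak equivalences and send the degeneracy inclusions in the resolutions to cofibrations, so that realization preserves the relevant equivalences in $\aM_{R}$ (respectively in partial power systems). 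Granting this homotopical bookkeeping—which is exactly the content lifted from \cite{MayMult,kmbook} to the present setting—the proof is complete.
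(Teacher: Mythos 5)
Your proposal matches the paper's proof essentially verbatim: you build $E^{\sharp}A=B(\bgO,\bgO,A)$ and $RA=B(\bgsO U,\bgO,A)$ (your $(\bgO^{p}A)_{1}$ is the paper's $U\bgO^{p}A$), compare them via the natural weak equivalence $\bgO X\to T\bgsO U X$ coming from the $\lambda$'s, and take $E=B(\bgsO,\bgsO,-)$ on the $\gO$-algebra side, just as in the paper. Your observation that $\bgO T\iso T\bgsO$ as monads forces $E\to RT$ to be a levelwise isomorphism (not merely a weak equivalence) is a detail the paper glosses over with ``analogous,'' and is a nice bit of extra precision, but the overall strategy is identical.
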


As a consequence, the categories of partial $\gO$-algebras
and of true $\gO$-algebras become equivalent after formally inverting the
weak equivalences. 

The proof of theorem is an application of the two-sided monadic bar
construction. To avoid confusion, write $UX$ for $X_{1}$ for a partial
power system $X$.  We note that for any partial power system $X$, the
structure maps $\lambda$ induce a natural map of partial power 
systems $X\to TUX$ (in fact, a weak equivalence), which together with
the identity $UTY=Y$, identify 
$U$ and $T$ as adjoints.  In this notation, the monad $\bgsO$ in
$R$-modules is $U\bgO T$.

We let 
$RA=B(\bgsO U,\bgO,A)$
be the geometric realization of the simplicial $R$-module
\[
B\subdot(\bgsO U,\bgO,A)= \bgsO U\,
\underbrace{\bgO\dotsb \bgO}_{\text{$\ssdot$ iterates}}\, A.
\]
Here the face maps are induced by the $\bgO$-action on $A$, the
monadic multiplication on $\bgO$, and the multiplication 
\[
\bgsO U\bgO X \to \bgsO \bgsO UX\to \bgsO UX.
\]
The degeneracy maps are induced by the unit of the monad $\bgO$.
Because the monad $\bgsO$ commutes with geometric realization, $RA$ is
naturally a $\gO$-algebra.

We let $E^{\sharp}A=B(\bgO,\bgO,A)$ be the geometric realization of
the ``standard resolution'' 
\[
B\subdot(\bgO,\bgO,A)=\bgO \,
\underbrace{\bgO\dotsb \bgO}_{\text{$\ssdot$ iterates}}\, A
\]
(where we understand the geometric realization of a partial power
system to be performed objectwise).
The augmentation $B\subdot(\bgO,\bgO,A)\to A$
is a map of simplicial partial $\gO$-algebras (where we regard $A$ as
a constant simplicial object) and a simplicial homotopy equivalence of
partial power systems.  Thus, the geometric realization is a natural
map of partial $\gO$-algebras and a weak equivalence (in fact, a
homotopy equivalence of $\Sigma_{m}$-equivariant $R$-modules in each
partial power). 

The natural map $\bgO X\to TU\bgO TU X = T\bgsO UX$ is a map of
partial $\gO$-algebras; it is a weak equivalence since it is induced by
the maps $\lambda_{m_{1},\dotsc,m_{r}}$. Moreover,
the map is compatible with the left action of the monad $\bgO$.  Thus,
we get a map of simplicial partial $\gO$-algebras 
\[
B\subdot(\bgO,\bgO,A)\to B\subdot(T\bgsO U,\bgO,A)
\]
which is a weak equivalence in each simplicial degree.  The geometric
realization is a weak equivalence and induces the natural weak equivalence
of partial $\gO$-algebras $E^{\sharp}\to TR$ in the statement.

On the $\gO$-algebra side, we take $E$ to be the geometric realization
of the standard resolution $B\subdot(\bgsO,\bgsO,-)$.  The
construction and study of the natural transformations are analogous
to the partial $\gO$-algebra case described in detail above.

\section[The Moore Algebra]%
{The Moore Partial Algebra of a Partial $\LC[1]$-Algebra}
\label{secmoore}

Moore constructed a variant of the loop space of a topological space
where the concatenation of loops is strictly associative and unital
and not just associative and unital up to homotopy.  This construction
easily extends to any $\LC[1]$ space, and in fact quite generally to
$\LC[1]$-algebras in topological categories.  In this section, we
generalize Moore's construction to the partial context, constructing a
partial associative $R$-algebra from a partial $\LCR[1]$-algebra.  In
the next section, we use this to construct the cyclic bar construction
of a partial $\LCR[1]$-algebra.

We begin by reviewing the construction of  the Moore algebra of a (true)
$\LC[1]$-algebra, before treating the slightly more complicated
partial case.  Recall that an element of $\LC[1](m)$ consists of an
ordered list of almost disjoint subintervals of the unit interval,
not necessarily in their natural order.  The operadic multiplication
$a\circ_{i}b$ replaces the $i$-th subinterval in $a$ with a scaled
version of the subintervals in $b$.  The element
\[
\gamma=([0,1/2],[1/2,1]) \in \LC[1](2)
\]
represents the loop multiplication for the action of $\LC[1]$ on a
loop space; the two composites 
\[
\gamma\circ_{1}\gamma=([0,1/4],[1/4,1/2],[1/2,1])
\quad\text{and}\quad 
\gamma\circ_{2}\gamma=([0,1/2],[1/2,3/4],[3/4,1])
\]
in $\LC[1](3)$ represent the
two associations of the multiplication of three loops in a loop space.
The basic idea of Moore's construction is to add 
a length parameter to build an associative multiplication from a
$\LC[1]$-multiplication.  Specifically, given lengths $r$ and $s$, we
get an element $\gamma_{r,s}$ in $\LC[1](2)$ that models the concatenation
of a loop of length $r$ with a loop of length $s$, rescaled to the
unit interval. 
\[
\underbrace{%
\subseg{r}{10em}%
\subseg{s}{6em}}_{r+s} 
\]
Namely, $\gamma_{r,s}$ is the element
\[
\gamma_{r,s}=([0,r/(r+s)],[r/(r+s),1])\in \LC[1](2).
\]
Writing $P=(0,\infty)\subset \bR$ for
the space of positive real numbers, the ``concatenation formula'' 
\[
\Gamma \colon P\times P\to P\times \LC[1](2)
\]
sends $(r,s)$ to $(r+s,\gamma_{r,s})$.

For a $\LC[1]$-algebra $A$, we get an associative multiplication on
$P_{+}\sma A$ using $\Gamma$ and the $\LC[1](2)$-action,
\begin{multline*}
(P_{+}\sma A)\sma_{R}(P_{+}\sma A)\iso 
(P\times P)_{+}\sma (A\sma_{R}A)\to\\
(P\times \LC[1](2))_{+}\sma (A\sma_{R}A)\iso
P_{+}\sma (\LC[1](2)_{+}\sma (A\sma_{R}A))
\to P_{+}\sma A.
\end{multline*}
To make this unital, let $\bar P=[0,\infty)\subset \bR$ denote the
non-negative real numbers, and define the $R$-module $\MA$ by the
following pushout diagram.
\begin{equation}\label{eqmoore}
\begin{gathered}
\xymatrix@-1pc{%
P_{+}\sma R\ar[r]\ar[d]&P_{+}\sma A\ar[d]\\
\bar P_{+}\sma R\ar[r]&\MA
}
\end{gathered}
\end{equation}
The multiplication above then extends to an associative multiplication
on $\MA$, and is now unital with the unit $R\to \MA$ induced by the
inclusion of $\{0\}_{+}\sma R$ into $\bar P_{+}\sma R$.

\begin{prop}
For a $\LC[1]$-algebra $A$, $\MA$ is naturally an associative $R$-algebra.
\end{prop}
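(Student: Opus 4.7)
The plan is to assemble the multiplication on $\MA$ from two compatible pieces---the formula on $P_{+}\sma A$ from the excerpt, and length addition on $\bar P_{+}\sma R$---then verify associativity by reducing to a single operadic identity in $\LC[1](3)$, and verify unitality by direct inspection.

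By the pushout defining $\MA$ and the distributivity of $\sma_{R}$ over pushouts, $\MA\sma_{R}\MA$ is a pushout of smash products built from $\bar P_{+}\sma R$ and $P_{+}\sma A$ along their overlap $P_{+}\sma R$. I specify the multiplication by its restrictions to the defining pieces: on $(\bar P_{+}\sma R)\sma_{R}(\bar P_{+}\sma R)$, take the map induced by addition $\bar P\times \bar P\to \bar P$; on $(P_{+}\sma A)\sma_{R}(P_{+}\sma A)$, take the map from the excerpt. These agree on their common overlap $(P_{+}\sma R)\sma_{R}(P_{+}\sma R)$ because, applied to a pair of units, the $\LC[1](2)$-action produces the unit and $\Gamma$ records just the sum of lengths. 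They therefore extend to a multiplication $\mu\colon \MA\sma_{R}\MA\to \MA$, and the unit of $\MA$ is the composite $R\iso \{0\}_{+}\sma R\hookrightarrow \MA$.

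Unitality follows immediately: on the $\bar P_{+}\sma R$ piece, multiplying by the unit adds $0$, which is the identity on $\bar P_{+}$; on the $P_{+}\sma A$ piece, multiplying by the unit reduces via the unit $\eta\colon R\to A$ and the unit isomorphism to the identity. For associativity on $\MA\sma_{R}\MA\sma_{R}\MA$, the cases involving at least one unit factor reduce directly to unitality plus binary associativity; the essential case is three factors in $P_{+}\sma A$, where both associations produce a map of the form
\[
(r,s,t;\,a,b,c)\mapsto (r+s+t,\;\theta_{r,s,t}(a,b,c))
\]
with $\theta_{r,s,t}\in \LC[1](3)$. A direct calculation shows
\[
\gamma_{r+s,t}\circ_{1}\gamma_{r,s}
=\gamma_{r,s+t}\circ_{2}\gamma_{s,t}
=\bigl([0,\tfrac{r}{r+s+t}],\,[\tfrac{r}{r+s+t},\tfrac{r+s}{r+s+t}],\,[\tfrac{r+s}{r+s+t},1]\bigr)
\]
in $\LC[1](3)$, so operadic associativity of the $\LC[1]$-action on $A$ gives the equality of the two associations. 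Naturality in $A$ is clear, and the main labor is the bookkeeping of the pushout decomposition of $\MA\sma_{R}\MA$; the essential algebraic content is the single identity in $\LC[1](3)$ above.
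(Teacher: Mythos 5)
Your proof is correct and fleshes out exactly what the paper intends: the paper states the result after only the one-sentence sketch that the multiplication on $P_{+}\sma A$ ``extends'' to $\MA$, and your verification of the single identity
$\gamma_{r+s,t}\circ_{1}\gamma_{r,s}=\gamma_{r,s+t}\circ_{2}\gamma_{s,t}$ in $\LC[1](3)$ is the right algebraic content, correctly computed.
One thing you elide is the precise handling of the mixed vertices: $\MA\sma_{R}\MA$ is a colimit over $\oT\times\oT$ (nine vertices, not two), and the restrictions to $(P_{+}\sma A)\sma_{R}(\bar P_{+}\sma R)$ and its transpose are not given by either of the two formulas you write down.
On that piece the multiplication has to send $(r,a;s,1)$ to $(r+s,\,[0,r/(r+s)]\cdot a)$, and the key point is that $[0,r/(r+s)]=\gamma_{r,s}\circ_{2}1$ (with $1\in\LC[1](0)$) is a well-defined element of $\LC[1](1)$ for \emph{all} $s\geq 0$, including $s=0$ where it becomes the identity interval $[0,1]$---this is what makes the extension to $\bar P$ on that piece continuous, lets the formula agree with the ones you do write on the overlaps, and is what unitality actually rests on.
Your phrase ``reduces via the unit $\eta$ and the unit isomorphism to the identity'' gestures at this but does not justify it; the paper resolves this bookkeeping systematically in the subsequent $\oT^{m}$-diagram construction for the partial case, and your proof would be airtight with a sentence making the mixed-piece formula and its degeneracy at $s=0$ explicit.
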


To relate $A$ and $\MA$, note that forgetting the lengths (collapsing
$P$ and $\bar P$ to a point), we obtain a map $\MA\to A$.  This map is
a homotopy equivalence of $R$-modules: The map $A\to \MA$ induced by
the inclusion of $1$ in $P$ provides the homotopy inverse.  The
composite map on $A$ is the identity and compatible null homotopies on $P$
and $\bar P$ induce a homotopy from the identity to the composite on
$\MA$.  (See \cite[\S6]{E1E2E3E4} for a comparison of $A$ and $\MA$ as
$\LC[1]$-algebras.)

To extend this to the case of partial algebras, we need to construct
an appropriate partial power system with the $m$-th partial power
analogous to 
the $m$-th smash power of the pushout in~\eqref{eqmoore}.  Let $\oT$
be the diagram with objects $a,b,c$ and arrows $\alpha,\beta$ as
pictured
\[
\xymatrix@-1pc{%
&c\ar[dl]_{\beta}\ar[dr]^{\alpha}\\b&&a
}
\]
so that a pushout is a colimit indexed on $\oT$.

\begin{cons}
Let $A$ be a partial $\LC[1]$-algebra.  We construct the op-lax power
system $\MA$ as follows.  Let $\MA[1]$ be the pushout
\[
\MA[1]=(P_{+} \sma A_{1})\cup_{P_{+}\sma R} (\bar P_{+}\sma R),
\]
where the map $R\to A_{1}$ is induced by the unique element of
$\LC[1](0)$.  Inductively, having defined
$\MA[1],\dotsc,\MA[m-1]$, we define $\MA[m]$ as a colimit over the
following diagram $D_{m}$ indexed on $\oT^{m}$.  At a spot indexed by
$(x_{1},\dotsc,x_{m})$, we put a copy of 
\[
(P_{x_{1}}\times \dotsb \times P_{x_{m}})_{+}\sma A_{\ma}
\]
where $\ma=\ma(x_{1},\dotsc,x_{m})$ is the number of occurrences of
$a$ in the $x_{i}$'s, and 
\[
P_{x_{i}}=\begin{cases}
P&x_{i}=a\text{ or }x_{i}=c\\
\bar P&x_{i}=b.
\end{cases}
\]
The map
\[
(x_{1},\dotsc,x_{i-1},c,x_{i+1},\dotsc,x_{m})
\to (x_{1},\dotsc,x_{i-1},y,x_{i+1},\dotsc,x_{m})
\]
induced by $\alpha$ (when $y=a$) is induced
by the map $A_{\ma-1}\to A_{\ma}$ induced by the element
\[
(\id,\dotsc,\id,1,\id,\dotsc,\id)\in 
\LC[1](1)^{\ma(x_{1},\dotsc,x_{i-1})}\times \LC[1](0)\times \LC[1](1)^{\ma(x_{i+1},\dotsc,x_{n})},
\]
where $\id$ is the identity element in $\LC[1](1)$ and $1$ is the
unique element in $\LC[1](0)$.  The map induced by $\beta$ (when
$y=b$) is induced by the inclusion of $P$ in $\bar P$ in the
appropriate factor.  We give $\MA[m]$ the $\Sigma_{m}$-action induced
by permuting the factors of $\oT^{m}$,
together with the appropriate re-arrangement of the $P_{x_{i}}$ factors and 
the $\Sigma_{\ma}$-action on $A_{\ma}$ (corresponding to restricting a
permutation in $\Sigma_{m}$ to the rearrangement of the $a$'s in the
object of $\oT^{m}$).   The structure maps of $A$ induce the structure
maps $\MA[m+n]\to \MA[m]\sma_{R}\MA[n]$ for $\MA$.
\end{cons}

Collapsing the $P_{x_{i}}$ factors to a point defines a map of op-lax
power systems $\MA\to A$.  An argument like the one above shows that
each map $\MA[m]\to A_{m}$ is a $\Sigma_{m}$-equivariant homotopy
equivalence of $R$-modules. 

\begin{prop}
Each map $\MA[m]\to A_{m}$ is a $\Sigma_{m}$-equivariant homotopy
equivalence of $R$-modules.  In particular, $\MA$ is a partial power
system and is tidy exactly when $A$ is.
\end{prop}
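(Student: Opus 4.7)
The plan is to prove the homotopy equivalence statement by directly exhibiting a $\Sigma_{m}$-equivariant strong deformation retract structure on the collapse map $\pi_{m}\colon \MA[m]\to A_{m}$, and to deduce the partial-power-system and tidiness assertions formally from it.

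First I will define a $\Sigma_{m}$-equivariant section $\sigma_{m}\colon A_{m}\to \MA[m]$ by including $A_{m}$ at the vertex $(a,\dotsc,a)\in\oT^{m}$ of the diagram $D_{m}$ through the basepoint $(1,\dotsc,1)\in P^{m}$; explicitly, as $A_{m}\iso\{(1,\dotsc,1)\}_{+}\sma A_{m}\hookrightarrow (P\times\dotsb\times P)_{+}\sma A_{m}$ followed by the coprojection into the colimit. Because $\pi_{m}$ just collapses the $P_{x_{i}}$-coordinates (and is identity on the $A_{m}$-factor at this terminal cell), the composite $\pi_{m}\circ\sigma_{m}$ is the identity on $A_{m}$.

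For the opposite composite I use the homotopy $h\colon\bar P\times I\to\bar P$ given by $h_{t}(x)=(1-t)x+t$, which is a strong deformation retraction of $\bar P$ onto $\{1\}$ that restricts to a strong deformation retraction of $P$ onto $\{1\}$ (since $(1-t)x+t>0$ whenever $x>0$) and commutes with the inclusion $P\hookrightarrow\bar P$. Applying $h_{t}$ coordinatewise on the $P_{x_{i}}$ factors and the identity on $A_{\ma}$ defines a self-map of each cell of $D_{m}$; compatibility with the diagram maps is the one real check. For maps induced by $\alpha$, only the $A_{\ma}$-coordinate changes and $h_{t}$ acts trivially there; for maps induced by $\beta$, the $P$-factor map is exactly $P\hookrightarrow\bar P$, with which $h_{t}$ commutes by construction. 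The family $h_{t}$ therefore descends to a self-homotopy of $\MA[m]$ from $h_{0}=\id$ to $h_{1}=\sigma_{m}\circ\pi_{m}$ (where the identification $h_{1}=\sigma_{m}\circ\pi_{m}$ in the colimit uses that an element with all $P$-coordinates equal to $1$ is identified, through successive $\beta$- and $\alpha$-moves, with an element at the vertex $(a,\dotsc,a)$ obtained by inserting units in the $A_{\ma}$-factor). The homotopy is $\Sigma_{m}$-equivariant because $h_{t}$ is applied uniformly in every coordinate. The main (mild) obstacle is precisely this diagram-compatibility verification.

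The remaining claims follow formally. The commutative square
\[
\xymatrix{
\MA[m+n]\ar[r]\ar[d]_{\pi_{m+n}}
&\MA[m]\sma_{R}\MA[n]\ar[d]^{\pi_{m}\sma_{R}\pi_{n}}\\
A_{m+n}\ar[r]&A_{m}\sma_{R}A_{n}
}
\]
(and its obvious iterates) has as vertical maps smash products of homotopy equivalences, hence homotopy equivalences; thus the top horizontal structure map of $\MA$ is a weak equivalence if and only if the bottom structure map of $A$ is, and $\MA$ is a partial power system if and only if $A$ is. For tidiness, $\pi_{1}\colon\MA[1]\to A_{1}$ is a homotopy equivalence and so is preserved by both $\sma_{R}$ and $\sma_{R}^{\ld}$, whence the canonical map from the derived to the point-set $m$-fold smash power is an isomorphism in $\aD_{R}$ for $\MA[1]$ if and only if it is for $A_{1}$.
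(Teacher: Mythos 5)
Your proof is correct and fills in precisely the argument the paper gestures at: the paper says ``an argument like the one above,'' referring to the earlier sketch where the inclusion of $1\in P$ provides the homotopy inverse and ``compatible null homotopies on $P$ and $\bar P$'' give the homotopy on $\MA[\ssdot]$, and your coordinatewise contraction $h_t(x)=(1-t)x+t$ to $\{1\}$, together with the check that it is natural with respect to $\alpha$ and $\beta$ in the diagram $D_m$ and hence descends to the colimit, is exactly that argument made explicit. The formal deductions of the partial-power-system and tidiness claims from the levelwise equivalences are also as intended.
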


A partial associative $R$-algebra structure is a partial $\oA$-algebra
structure for $\oA$ the associative operad.  We write $\mu_{j}\in
\oA(j)$ for the canonical element, representing the 
$j$-fold multiplication (if $j>1$), the operadic identity (if $j=1$),
or the unit (if $j=0$).  To define a partial $R$-algebra structure
on the partial power system $X$, we need to specify maps
\[
(\mu_{j_{1}},\dotsc,\mu_{j_{m}})\colon X_{j_{1}+\dotsb +j_{m}}\to X_{m}
\]
for all $j_{1},\dotsc,j_{m}\geq 0$, satisfying the associativity and
identity diagrams implicit in Definition~\ref{defpart}.  These
conditions become easier to verify by thinking of the sequence
$j_{1},\dotsc,j_{m}$ as specifying a map of totally ordered sets
\[
\{1,\dotsc,j\}\to \{1,\dotsc,m\}
\]
where $j=j_{1}+\dotsb+j_{m}$: It specifies the unique weakly
increasing map $\phi$ such that the cardinality of $\phi^{-1}(i)$ is 
$j_{i}$.  The following well-known fact is a consequence of an easy
check of the diagrams, q.v.~\cite[VII\S5]{maclane}.  In it, 
we write $\otom$ for the totally ordered set
$\{1,\dotsc,m\}$, and $\oDDelta$ for the category whose objects are
the totally ordered sets $\otos[0],\otos[1],\dotsc $ and whose morphisms
are the weakly increasing maps.

\begin{prop}\label{propmaclane}
Let $X$ be a partial power system in the category of $R$-modules.  A
partial $R$-algebra structure on $X$ consists of a map $X_{\phi}\colon
X_{j}\to X_{m}$ for each $\phi \colon \otoj\to\otom$ in $\oDDelta$,
making $X$ a functor from $\oDDelta$ to $R$-modules, such that the
diagram
\[
\xymatrix{%
X_{i+j}\ar[r]^-{\lambda_{j,k}}\ar[d]_{X_{\theta+\phi }}
&X_{i}\sma_{R}X_{j}\ar[d]^{X_{\theta}\sma_{R}X_{\phi}}\\
X_{m+n}\ar[r]_-{\lambda_{m,n}}&X_{m}\sma_{R}X_{n}
}
\]
commutes for all $\theta \colon \otos[i]\to\otom$, $\phi\colon \otoj\to\oton$.
\end{prop}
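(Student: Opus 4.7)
The plan is to unpack the monad $\bgO$ in the case $\gO=\oA$ so that the classical bijection between weakly increasing maps $\phi\colon \otoj \to \otom$ and tuples $(j_{1},\dotsc,j_{m})$ of non-negative integers with $\sum j_{i}=j$ (where $j_{i}=|\phi^{-1}(i)|$) becomes manifest. The proposition is then essentially the partial-power-system translation of the classical description \cite[VII\S5]{maclane} of monoids in a monoidal category as functors out of $\oDDelta$.

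First I would note that in the associative operad, $\oA(j)$ is $\Sigma_{j}$-freely generated by the canonical element $\mu_{j}$, so the $\Sigma_{j_{1}}\times\dotsb\times\Sigma_{j_{m}}$-quotient in Construction~\ref{consmonad} reduces each summand
\[
(\oA(j_{1})\times\dotsb\times\oA(j_{m}))_{+}\sma_{\Sigma_{j_{1}}\times\dotsb\times\Sigma_{j_{m}}} X_{j_{1}+\dotsb+j_{m}}
\]
canonically (via the class of $(\mu_{j_{1}},\dotsc,\mu_{j_{m}})$) to a single copy of $X_{j_{1}+\dotsb+j_{m}}$. Hence $(\bgO X)_{m}\iso \bigvee_{\phi} X_{j}$, the wedge ranging over $\phi\in\oDDelta(\otoj,\otom)$, and a partial $\oA$-algebra structure map $\bgO X\to X$ is exactly a collection of maps $X_{\phi}\colon X_{j}\to X_{m}$, one per morphism of $\oDDelta$.

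Next I would verify that functoriality $X_{\psi\circ\phi}=X_{\psi}\circ X_{\phi}$ in $\oDDelta$ corresponds to the associativity axiom for the $\bgO$-action: the operadic multiplication in $\oA$ sends the canonical elements to canonical elements, and the resulting partition of $\otoj$ is exactly the one encoding the composite weakly increasing map. The unit $\mu_{1}=\id\in\oA(1)$ gives $X_{\id}=\id$.

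Finally, I would check that the op-lax compatibility built into the monad $\bgO$ (the associativity isomorphism in Construction~\ref{consmonad} relating $(\bgO X)_{m+n}$ to $(\bgO X)_{m}\sma_{R}(\bgO X)_{n}$) translates precisely to the displayed square: under the identification above, the summand of $(\bgO X)_{m+n}$ indexed by the concatenated morphism $\theta+\phi\colon \otos[i+j]\to\otos[m+n]$ maps under $\lambda_{m,n}$ to the smash of the summands indexed by $\theta$ and $\phi$. Both directions of the proposition are then obtained by reading the correspondence either way. The only real obstacle is the bookkeeping of the symmetric-group quotients in the first step; after that, all verifications are formal consequences of the definitions.
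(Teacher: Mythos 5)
Your proposal is correct and supplies exactly the routine diagram check the paper leaves implicit (referring to MacLane's classical description of monoids in a monoidal category): the $\Sigma$-freeness of $\oA(j)$ on the canonical element $\mu_{j}$ identifies $(\bgO X)_{m}$ with $\bigvee_{\phi\colon\otoj\to\otom} X_{j}$, after which the unit and associativity axioms for the $\bgO$-action become the $\oDDelta$-functoriality, and the requirement that the action be a map of op-lax power systems becomes the displayed square.
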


More intrinsically, the previous proposition states that a partial
associative $R$-algebra is a strictly unital op-lax monoidal functor $(X,\lambda)$
from $\oDDelta$ to $R$-modules together with a $\Sigma_{m}$-action on
$X_{m}=X_{\otom}$ making $X$ into a partial power system,
cf.~\cite[1.6(a),2.2.1]{LeinsterPartial}. 

To construct the partial associative $R$-algebra structure on $\MA$,
first we must generalize the elements $\gamma_{r,s}$ above.
For $\vec x=(x_{1},\dotsc,x_{m})$ in $\oT^{m}$ and $\vec
r=(r_{1},\dotsc,r_{m})$ in $P_{x_{1}}\times \dotsb \times P_{x_{m}}$,
let $\gamma^{\vec x}_{\vec r}$ be the following element of $\LC[1](\ma)$,
where $\ma=\ma(x_{1},\dotsc,x_{n})$, the number of $a$'s among the
$x_{i}$.  If $\ma=0$, then $\gamma^{\vec x}_{\vec r}$ is the unique
element of $\LC[1](0)$.  Otherwise, define $k_{1} < k_{2} < \dotsb <
k_{\ma}$ by $x_{k_{i}}=a$, and note that $r_{k_{i}}>0$ for all $i$.  Let
$\gamma^{\vec x}_{\vec r}$ consist of the subintervals
\[
\left[\frac{r_{1}+\dotsb+r_{k_{i}-1}}{r_{1}+\dotsb+r_{m}},
\frac{r_{1}+\dotsb+r_{k_{i}}}{r_{1}+\dotsb+r_{m}}\right]
\]
(in their natural order) for $i=1,\dotsc,\ma$.  For example, if $m=\ma$
(i.e., all the $x_{i}'s$ are $a$'s), then $\gamma^{\vec x}_{\vec r}$
is the element the of $\LC[1](m)$ that
subdivides the unit interval into $m$ subintervals of lengths the
given proportions $r_{1},\dotsc,r_{m}$; when one of the $x_{i}$'s is
$c$ or $b$ with $r_{i}>0$, then $\gamma^{\vec x}_{\vec r}$ contains a
gap proportional to 
$r_{i}$ where the subinterval would have been if that $x_{i}$ were
$a$.  An $x_{i}$ that is $b$ with $r_{i}=0$, has neither a subinterval
nor a gap corresponding to it; in the multiplication below, it will
behave like a unit factor in an $m$-th smash power.

Next, for fixed $\phi \colon \otoj\to\otom$ in $\oDDelta$, let
$F_{\phi}$ be the functor from $\oT^{j}$ to $\oT^{m}$ that sends
$(x_{1},\dotsc,x_{j})$ to 
$(y_{1},\dotsc,y_{m})$ where 
\[
y_{i}=\begin{cases}
a&\text{if at least one $k\in \phi^{-1}(i)$ satisfies $x_{k}=a$}\\
b&\text{if every $k\in \phi^{-1}(i)$ satisfies $x_{k}=b$ (or $\phi^{-1}(i)$ is empty)}\\
c&\text{if at least one $k\in \phi^{-1}(i)$ satisfies $x_{k}=c$ and none satisfy $x_{k}=a$}\\
\end{cases}
\]
and does the only possible thing on maps. 

We defined $\MA[m]$ as the
colimit of a diagram $D_{m}$ indexed on $\oT^{m}$; we now define
$\MA[\phi]\colon \MA[j]\to\MA[m]$ to be the map on colimits induced by
a natural transformation $D_{\phi}\colon D_{j}\to F_{\phi}^{*}D_{m}$
as follows. 
For $(x_{1},\dotsc,x_{j})$ in $\oT^{j}$, 
define $J_{i}$ and $j_{i}$ by
$\phi^{-1}(i)=\{J_{i}+1,\dotsc,J_{i}+j_{i}\}$. (In other words, let
$j_{i}=|\phi^{-1}(i)|$ and $J_{i}=j_{1}+\dotsb+j_{i-1}$.)
write $\vec x_{\phi^{-1}(i)}$ for
$(x_{J_{i}+1},\dotsc,x_{J_{i}+j_{i}})$,
and for $(r_{1},\dotsc,r_{j})\in P_{x_{1}}\times \dotsb \times P_{x_{j}}$, write
$\vec r_{\phi^{-1}(i)}$ for $(r_{J_{i}+1},\dotsc,r_{J_{i}+j_{i}})$.
Then let $D_{\phi}$ be the
natural transformation 
\[
(P_{x_{1}}\times \dotsb \times P_{x_{j}})_{+}\sma A_{\ma(\vec x)}
\to (P_{y_{1}}\times \dotsb \times P_{y_{m}})_{+}\sma A_{\ma(\vec y)}
\]
(for $(y_{1},\dotsc,y_{m})=F_{\phi}(x_{1},\dotsc,x_{j})$ as above),
sending $(r_{1},\dotsc,r_{j})\in P_{x_{1}}\times \dotsb \times P_{x_{j}}$ to 
\[
\bigg(\sum \vec r_{\phi^{-1}(1)},\dotsc, \sum \vec r_{\phi^{-1}(m)}\bigg)
\in P_{y_{1}}\times \dotsb \times P_{y_{m}}
\]
(where we understand the sum to be zero when $\phi^{-1}(i)$ is empty), and sending
$A_{\ma(\vec x)}\to A_{\ma(\vec y)}$ by the map induced by
\[
\bigg(\gamma^{\vec x_{\phi^{-1}(k_1)}}_{\vec r_{\phi^{-1}(k_1)}},\dotsc
\gamma^{\vec x_{\phi^{-1}(k_\ell)}}_{\vec r_{\phi^{-1}(k_\ell)}}\bigg)
\in \LC[1](\ma(\vec x_{\phi^{-1}(k_1)} ))\times \dotsb \times \LC[1](\ma(\vec x_{\phi^{-1}(k_\ell)} )).
\]
where $k_1,...,k_\ell$ are the position of the $a$'s in $\vec y =
F_{\phi}(\vec x)$ and $\ell = \ma(\vec y).$ 

The formula in the $P_{x_{k}}$ factors lands in the appropriate
$P_{y_{i}}$ since $\sum \vec r_{\phi^{-1}(i)}$ can only be $0$ when
every object in the list $\vec x_{\phi^{-1}(i)}$ is $b$.

To see that $D_{\phi}$ are natural in $\oT^{j}$, it suffices to check
the maps $\alpha$ and $\beta$ separately.  For the maps $\beta$, the
formulas in the $P_{x_{k}}$ factors are clearly natural in $\oT^{n}$.
For the maps $\alpha$, we note that $\alpha$ is induced by $1\in
\LC[1](0)$ and the composition $\gamma^{\vec x}_{\vec r}\circ_{i}1$
drops the $i$-th subinterval.

This completes the construction of the natural transformation
$D_{\phi}$ and of the map $\MA[\phi]$.  A straightforward check of the
diagrams now proves the following theorem.

\begin{thm}\label{thmmoore}
The maps $\MA[\phi]$ above make $\MA$ into a partial associative $R$-algebra.
\end{thm}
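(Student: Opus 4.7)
By Proposition~\ref{propmaclane}, what needs to be verified is twofold: (a) that the assignment $\phi\mapsto \MA[\phi]$ is functorial on $\oDDelta$, and (b) that the structure maps $\lambda_{m,n}\colon\MA[m+n]\to\MA[m]\sma_R\MA[n]$ are op-lax monoidal with respect to the $\MA[\phi]$. Both reduce to checking the corresponding identities on the indexing diagrams $D_m$, because all the $\MA$-level maps are induced on colimits from natural transformations built out of the $D_\phi$ and $\lambda$ assemble diagram-wise.

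For the identity morphism $\id\colon\otom\to\otom$, one has $F_{\id}=\id_{\oT^m}$, the sums $\sum\vec r_{\id^{-1}(i)}$ are just $r_i$, and each $\gamma$-element is $\gamma^{(x_i)}_{(r_i)}$, which equals the operadic identity in $\LC[1](1)$ when $x_i=a$ and the unique element of $\LC[1](0)$ when the position is empty; so $D_{\id}$ is the identity natural transformation and $\MA[\id]=\id$. For composition: given $\phi\colon\otoj\to\otom$ and $\psi\colon\otom\to\otop$, I would first show that $F_\psi\circ F_\phi=F_{\psi\phi}$ by a case analysis on $\{a,b,c\}$ (the rules for determining a letter from its preimage in $\oT^m$ compose correctly because the cases are closed under the preimage operation). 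Then I would check $D_{\psi\phi}=(F_\phi^*D_\psi)\circ D_\phi$. In the $P$-factors this is associativity of addition: $\sum_{\ell\in(\psi\phi)^{-1}(k)}r_\ell=\sum_{i\in\psi^{-1}(k)}\sum_{\ell\in\phi^{-1}(i)}r_\ell$. In the $A$-factor it amounts to operadic associativity in $\LC[1]$: the two-step composite applies $\gamma^{\vec x_{\phi^{-1}(i)}}_{\vec r_{\phi^{-1}(i)}}$ and then $\gamma^{\vec y_{\psi^{-1}(k)}}_{\vec s_{\psi^{-1}(k)}}$ (with $s_i=\sum\vec r_{\phi^{-1}(i)}$), while the one-step composite applies $\gamma^{\vec x_{(\psi\phi)^{-1}(k)}}_{\vec r_{(\psi\phi)^{-1}(k)}}$; by the definition of the $\gamma$-elements as proportional subdivisions of the unit interval, nesting the subdivisions and rescaling them by the ratios $s_i/\sum_j s_j$ produces exactly the subdivision with proportions $r_\ell/\sum_\ell r_\ell$, which is the required equality of elements in $\LC[1](\ma(\vec z))$.

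For op-lax monoidality, given $\theta\colon\otos[i]\to\otom$ and $\phi\colon\otoj\to\oton$, the morphism $\theta+\phi\colon\otos[i+j]\to\otos[m+n]$ has the block property $(\theta+\phi)^{-1}(k)$ equals $\theta^{-1}(k)$ for $k\le m$ and the shift of $\phi^{-1}(k-m)$ otherwise. Consequently $F_{\theta+\phi}$ factors as the product of $F_\theta$ and $F_\phi$ on $\oT^{i+j}\cong\oT^i\times\oT^j$, and the $\gamma$-elements assembled for $D_{\theta+\phi}$ split into a block for the first $i$ positions governed by $\theta$ and a block for the last $j$ positions governed by $\phi$; this is precisely what is produced by applying $\lambda_{i,j}$ followed by $\MA[\theta]\sma_R\MA[\phi]$, since the $\lambda$ on $\MA$ is built from the $\lambda$ on $A$ which commutes with the operad action in the separate blocks. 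The required square therefore commutes at the level of the diagrams $D$, and passage to colimits gives the $\MA$-level square.

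The main obstacle is the functoriality calculation in the $A$-factor, where one must verify that the iterated $\LC[1]$-operations produced by two successive applications of the $\gamma^{\vec x}_{\vec r}$-formula agree with the single $\gamma$-formula for the composite. The arithmetic is elementary (ratios of partial sums of positive reals), but the bookkeeping across the three letter types $a,b,c$ is delicate because positions marked $b$ or $c$ contribute lengths and gaps but no $\LC[1]$-inputs; one has to check that the gaps in the inner $\gamma$'s, combined with the gaps in the outer $\gamma$, produce exactly the gaps of the one-step $\gamma$, and symmetrically that the remaining subintervals scale correctly. This is where the bulk of the verification lies; once it is done, the rest of the diagram-check is routine.
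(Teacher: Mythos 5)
Your proposal is correct and carries out exactly the ``straightforward check of the diagrams'' that the paper asserts without spelling out. The reduction via Proposition~\ref{propmaclane}, the identity and composition checks (with the nesting identity $\gamma^{\vec y_{\psi^{-1}(k)}}_{\vec s_{\psi^{-1}(k)}}\circ(\gamma^{\vec x_{\phi^{-1}(k_1)}}_{\vec r_{\phi^{-1}(k_1)}},\dotsc)=\gamma^{\vec x_{(\psi\phi)^{-1}(k)}}_{\vec r_{(\psi\phi)^{-1}(k)}}$ in $\LC[1]$ as the key computation), and the block decomposition $F_{\theta+\phi}\cong F_{\theta}\times F_{\phi}$ for op-lax monoidality are precisely the ingredients the paper has in mind.
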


\section{$THH$ of a Partial $\LC[n]$-Algebra}
\label{secncyc}

In this section, we study the multiplicative structure on $THH^{R}(A)$
for a partial $\LC$-algebra $A$.  Starting with the cyclic bar
construction of a partial associative $R$-algebra, we define
$THH^{R}(A)$ as the cyclic bar construction of the Moore partial
algebra $\MA$.  This depends only on the underlying
$\LC[1]$-structure; we use ``interchange'' (see
Definition~\ref{definterchange}) of the $\LC[1]$-structure with a
$\LC[n-1]$-structure to make $THH^{R}\subdot(\MA)$ into a partial
$\LC[n-1]$-algebra, proving Theorems~\ref{mainthh} and~\ref{mainrel}
from the introduction.  Finally, in Construction~\ref{consitthh}
below, we give a closed description of iterated $THH^{R}$.

We begin by reviewing the cyclic bar construction of a partial
associative $R$-algebra. 

\begin{cons}[The cyclic bar construction]\label{consncyalg}
For a partial associative $R$-algebra $A$, let $THH^{R}(A)$ be the
geometric realization of the cyclic $R$-module 
\[
THH^{R}_{p}(A)=A_{p+1}
\]
with action of the cyclic group $C_{p+1}$ inherited from the action of the
symmetric group $\Sigma_{p+1}$, 
with face maps $d_{i}$ induced by 
\[
(\id,\dotsc,\id,\mu,\id,\dotsc,\id)\in \oA(1)^{i}\times \oA(2)\times \oA(1)^{p-i-1}
\]
for $i=0,\dotsc,p-1$ (with $d_{p}$ induced by the cyclic permutation
followed by $d_{0}$), and degeneracy maps $s_{i}$ induced by
\[
(\id,\dotsc,\id,1,\id,\dotsc,\id)\in \oA(1)^{i}\times \oA(0)\times \oA(1)^{p-i}.
\]
\end{cons}

In the case of a partial $\LCR[1]$-algebra, we apply this construction
to the Moore partial algebra.

\begin{defn}\label{defncyen}
For a partial $\LCR[1]$-algebra $A$, we define $THH^{R}(A)=THH^{R}(\MA)$.
\end{defn}

We remark that this point-set construction does not generally
represent the correct homotopy type without additional assumptions on
$A$.  When $A$ is tidy (Definition~\ref{deftidy}), then $\MA$ is tidy,
and the simplicial object $THH^{R}\subdot(A)$ has the correct homotopy
type.  For the geometric realization to have the correct homotopy
type, it suffices for the simplicial object to be ``proper'' (for the
inclusion of the union of the degeneracies at each stage to be a
Hurewicz cofibration). The following proposition usually suffices for most
purposes.  We provide an iterable generalization in
Proposition~\ref{propitprop} at the end of this section.

\begin{prop}\label{proptrueprop}
Working in the context of $R$-modules
of symmetric spectra, orthogonal spectra, or EKMM $S$-modules, assume
that $A$ is a true $\LCR[1]$-algebra such that the map $R\to A$ is a
cofibration of the underlying $R$-modules.  Then
$\MA$ is tidy and $THH^{R}\subdot(\MA)$ is a proper simplicial object.
\end{prop}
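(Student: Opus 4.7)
The plan is to derive both tidyness and properness from the single assertion that the unit $R\to\MA[1]$ is a Hurewicz cofibration of $R$-modules with $\MA[1]$ cofibrant in the ambient model structure on $R$-modules (the standard cell structure for EKMM $S$-modules, the flat model structure for symmetric or orthogonal spectra). Once that assertion is established, the proposition follows by standard arguments about smash products and cyclic bar constructions.

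To produce this cofibration, I would use the pushout diagram~\eqref{eqmoore} defining $\MA[1]$. Smashing the hypothesized cofibration $R\to A$ with the based space $P_{+}$ produces a cofibration $P_{+}\sma R\to P_{+}\sma A$ of $R$-modules. Because cofibrations of $R$-modules are closed under pushout, the induced map $\bar P_{+}\sma R\to \MA[1]$ is also a cofibration of $R$-modules, and composing with the cofibration $R=\{0\}_{+}\sma R\to \bar P_{+}\sma R$---arising from smashing the based-space cofibration $\{0\}_{+}\hookrightarrow \bar P_{+}$ with $R$---exhibits $R\to\MA[1]$ as a cofibration of $R$-modules and $\MA[1]$ as cofibrant. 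Tidyness is then immediate: cofibrant objects are flat for $\sma_{R}$ in each of the three ambient categories, so the iterated point-set smash powers $\MA[1]^{\sma_{R} m}$ agree with the derived smash powers $\MA[1]^{\sma_{R}^{\ld} m}$ in $\aD_{R}$.

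For properness, I would invoke the standard fact that the cyclic bar construction of a unital associative $R$-algebra is proper whenever the unit is a Hurewicz cofibration and the algebra is cofibrant. Concretely, the inclusion of degenerate simplices into $\MA[1]^{\sma_{R}(p+1)}$ decomposes as an iterated pushout-product of the unit $R\to\MA[1]$ against copies of $\MA[1]$, and cofibrancy of $\MA[1]$ makes smashing with $\MA[1]$ preserve Hurewicz cofibrations; hence each such inclusion is itself a Hurewicz cofibration. For a true $\LCR[1]$-algebra, one also needs the identification of $\MA[p+1]$ with the $(p+1)$-fold smash power $\MA[1]^{\sma_{R}(p+1)}$ so that $THH^{R}\subdot(\MA)$ really is a standard cyclic bar construction, but this follows from the way the pushout defining $\MA[1]$ interacts with smash powers.

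The main obstacle is more bookkeeping than conceptual: one must verify that the technical meaning of ``cofibration of $R$-modules'' chosen in each of the three ambient categories is closed under smashing with based-space cofibrations and under pushout, implies the Hurewicz cofibration property on underlying (pre)spectra, and makes cofibrant objects flat for $\sma_{R}$. These properties hold for cell $R$-modules in EKMM and for flat (as opposed to stable) cofibrations in symmetric and orthogonal spectra, but each setting requires its own citation or verification.
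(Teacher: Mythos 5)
Your overall strategy---reduce to a cofibration condition on the unit $R\to\MA[1]$, deduce tidyness from flatness of the smash powers, deduce properness from the pushout-product description of the latching objects---is the same strategy the paper uses, though the paper packages the latching-object argument in terms of subcomplexes of an explicit relative $\oI'$-cell structure rather than invoking a ``standard fact.'' Both amount to the same thing in the end, and your more abstract phrasing is a legitimate alternative.

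The one genuine gap is the assertion that $\MA[1]$ is \emph{cofibrant} in the ambient model structure, together with the parenthetical claim that the relevant setting is ``the standard cell structure for EKMM $S$-modules.'' In EKMM, the unit $S$ (and more generally $R$) is \emph{not} cofibrant as an $R$-module; consequently $\MA[1]$, which receives a map from $R$ (not from the zero object) by a relative cell inclusion, is not a cell $R$-module either. Your argument would establish that $R\to\MA[1]$ is a cofibration relative to $R$, not that $\MA[1]$ is cofibrant. The paper's proof is careful on exactly this point: rather than asserting cofibrancy of the total object, it exhibits the degeneracies as inclusions of subcomplexes of a \emph{relative} $\oI'$-cell structure on $THH^R_p(\MA)$, \emph{relative to the inclusion of $R$}, and in the EKMM case it explicitly enlarges $\oI'$ by adjoining the Hurewicz cofibrations $R\sma S^{j-1}_+\to R\sma B^{j}_+$ precisely to accommodate the factor $\bar P_+\sma R$ in the pushout defining $\MA[1]$. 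Your argument should likewise track relative cofibrancy (or flatness over $R$) rather than absolute cofibrancy; in the symmetric/orthogonal setting under the stable model structure of MMSS this distinction is harmless since $R=R\sma_S S$ is cofibrant there, but in EKMM the distinction is essential, and sweeping it into ``each setting requires its own citation'' understates where the real work lies. The tidyness claim similarly needs ``flat'' (smash preserves weak equivalences) rather than ``cofibrant''; flatness does hold for $R$ itself and is inherited along relative cell inclusions, which is the correct statement to cite.

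One smaller imprecision: you describe the latching-object inclusion as ``an iterated pushout-product of the unit $R\to\MA[1]$ against copies of $\MA[1]$.'' In degree $p$ the cyclic bar construction has $p+1$ factors of $\MA[1]$ but only $p$ degeneracies, so the correct description is the $p$-fold pushout-product of $R\to\MA[1]$ smashed with one remaining copy of $\MA[1]$. This does not affect the conclusion but the bookkeeping matters if you actually carry out the pushout-product verification instead of citing it.
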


\begin{proof}
Under the hypotheses above, each map $A^{m-1}\to A^{m}$ induced by
$\LC[1](0)$ is a cofibration of the underlying $R$-modules, and in the
case of symmetric spectra or orthogonal spectra, it follows that
(after passing to a retraction if necessary), each degeneracy map from
$THH^{R}_{p-1}(\MA)$ is the inclusion of a subcomplex in a fixed
relative $\oI'$-cell
complex \cite[5.4]{MMSS} structure on $THH^{R}_{p}(\MA)$ (relative to
the inclusion of $R$), where $\oI'=\oI$ is
the set of generating cofibrations in the model structure
\cite[6.2]{MMSS}.  In the context of EKMM 
$S$-modules, the same holds, but for $\oI'$ the set of generating
cofibrations together with the maps $R\sma S^{j-1}_{+}\to R\sma
B^{j-1}_{+}$ (for $j\geq 0$).   It follows that the union (colimit) of
these subcomplexes is a subcomplex and its inclusion is a Hurewicz
cofibration. 
\end{proof}

This completes the generalization of the cyclic bar construction to
a partial $\LC[1]$-algebra $A$.  The remainder of the section studies the
multiplicative structure when $A$ is a $\LC$-algebra.  In this case,
we understand $A$ to be a $\LC[1]$-algebra via the \term{first-coordinate
embedding} of $\LC[1]$ in $\LC[n]$: This embedding takes the sequence
of subintervals
\[
( [x_{1},y_{1}],\dotsc,[x_{m},y_{m}] ) \in \LC[1](m)
\]
to the sequence of subrectangles
\[
( [x_{1},y_{1}]\times [0,1]^{n-1},\dotsc ,
[x_{m},y_{m}] \times [0,1]^{n-1}) \in \LC(m).
\]
We also have a \term{last coordinates embedding} of $\LC[n-1]$ in
$\LC[n]$, taking the sequence of subrectangles
\[
( [x^{1}_{1},y^{1}_{1}]\times \dotsb \times [x^{n-1}_{1},y^{n-1}_{1}],
\dotsc,
[x^{1}_{m},y^{1}_{m}]\times \dotsb \times [x^{n-1}_{m},y^{n-1}_{m}])
\in \LC[n-1](m)
\]
to the sequence of subrectangles
\[
( [0,1]\times [x^{1}_{1},y^{1}_{1}]\times \dotsb \times [x^{n-1}_{1},y^{n-1}_{1}],
\dotsc,
[0,1]\times [x^{1}_{m},y^{1}_{m}]\times \dotsb \times [x^{n-1}_{m},y^{n-1}_{m}])
\in \LC(m).
\]
Both of these embeddings are special cases of the following ``interchange'' map.

\begin{defn}\label{definterchange}
For $\ell,m\geq 0$, the \term{interchange map} is the map 
\[
\rho \colon \LC[1](\ell)\times \LC[n-1](m)\to
\LC(\ell m)
\]
that takes the pair
\[
([a^{i},b^{i}]\mid 1\leq i\leq \ell),
([x^{j}_{1},y^{j}_{1}]\times \cdots \times [x^{j}_{n-1},y^{j}_{n-1}]
\mid 1\leq j\leq m)
\]
to the sequence of subrectangles of $[0,1]^{n}$,
\[
[a^{i},b^{i}]\times [x^{j}_{1},y^{j}_{1}]\times \cdots \times
[x^{j}_{n-1},y^{j}_{n-1}],
\]
(for $1\leq i\leq \ell$, $1\leq j\leq m$), ordered by lexicographical
order in $(i,j)$. 
\end{defn}

The first coordinate embedding is then 
\[
\embf(-)=\rho (-,([0,1]^{n-1}))\colon \LC[1]\to \LC[n]
\]
and the last coordinate embedding is
\[
\embl(-)=\rho(([0,1]),-)\colon \LC[n-1]\to \LC[n].
\]
Using $\embl$, for any element $c$ in $\LC[n-1](m)$, we get an element
$\embl(c)$ in $\LC(m)$ and hence a map $A_{m}\to A$. 

We call $\rho$ the interchange map because for a (true) $\LC$-algebra
$A$, for any $a$ in $\LC[1](\ell)$ and $c$ in $\LC[n-1](m)$,
both composites in the diagram
\begin{equation}\label{eqint}
\begin{gathered}
\xymatrix@C+2pc{%
(A^{(m)})^{(\ell)}\ar[r]^-{(\embl(c))^{(\ell)}}
\ar[d]_{a}
&A^{(\ell)}\ar[d]^{a}\\
A^{(m)}\ar[r]_-{\embl(c)}&A
}
\end{gathered}
\end{equation}
are the induced map of $\rho(a,c)\colon A^{(\ell m)}\to A$ under the
isomorphism $A^{(\ell m)}\iso (A^{(m)})^{(\ell)}$ induced by
lexicographical order.  In the diagram, the left vertical arrow is the
action of $a$ in the ``diagonal'' $\LC$-algebra structure on
$A^{(m)}$: Its has the structure map
\begin{equation}\label{eqdiagact}
\LC(\ell)_{+}\sma (A^{(m)})^{(\ell)}
\to
(\LC(\ell)_{+})^{(m)}\sma (A^{(m)})^{(\ell)}
\iso
(\LC(\ell)_{+}\sma A^{(\ell)})^{(m)}
\to A^{(m)}
\end{equation}
where the first map is induced by the diagonal map on $\LC(\ell)$,
the last map is the action map for $A$ on each of the $m$-factors, and
the isomorphism in the middle is the permutation $\sigma_{m,\ell}$
that rearranges the 
$\ell$ blocks of $m$ factors of $A$ into $m$ blocks of $\ell$ factors.
The following proposition is an immediate consequence of the
commutative diagram. 

\begin{prop}\label{proptrueint}
Let $A$ be a true $\LC$-algebra.  For any $c$ in $\LC[n-1](m)$, the
map $A^{(m)}\to A$ induced by 
$\embl(c)$ is a map of $\LC[1]$-algebras.
\end{prop}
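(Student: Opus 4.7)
The plan is to recognize the square to be verified as exactly the diagram~\eqref{eqint} already stated in the paper. A map $f \colon B \to C$ of $\LC[1]$-algebras is, by definition, one for which the action square for every $a \in \LC[1](\ell)$ commutes, with $f^{(\ell)}$ on top and $f$ on bottom. Applied with $f = \embl(c)$, $B = A^{(m)}$ equipped with the diagonal $\LC[1]$-structure (obtained by restricting the diagonal $\LC$-structure of~\eqref{eqdiagact} along $\embf$), and $C = A$ equipped with the $\LC[1]$-structure restricted along $\embf$, the required square is precisely the instance of~\eqref{eqint} for the given $a$ and $c$.

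Having made this identification, the proposition reduces to the assertion stated in the text just before the statement: that both composites of~\eqref{eqint} realize the action of the single operad element $\rho(a,c) \in \LC(\ell m)$. I would verify this assertion by unpacking the operadic compositions on each side. The right-hand composite is the action of
\[
\embf(a) \circ \bigl(\embl(c),\ldots,\embl(c)\bigr),
\]
and inserting each copy of $\embl(c)$ into the corresponding subrectangle of $\embf(a) = \rho(a,([0,1]^{n-1}))$ produces precisely the $\ell m$ subrectangles $[a^i,b^i] \times [x^j_1,y^j_1] \times \cdots \times [x^j_{n-1},y^j_{n-1}]$, ordered lexicographically with $i$ outermost; this is exactly $\rho(a,c)$. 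The left-hand composite combines the permutation $\sigma_{m,\ell}$ built into~\eqref{eqdiagact} with the operadic composition $\embl(c) \circ (\embf(a),\ldots,\embf(a))$, which by the same little-cubes computation produces the same $\ell m$ subrectangles but in the opposite block order; applying $\sigma_{m,\ell}$ to reconcile these orderings again yields $\rho(a,c)$.

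Once both composites are exhibited as the action of the same operad element, the equivariance and associativity axioms of the true $\LC$-algebra structure on $A$ force their equality, and the proposition follows. The only real bookkeeping hurdle I expect will be the permutation $\sigma_{m,\ell}$: the two operadic composites naturally produce the $\ell m$ subrectangles with opposite block orderings, and the sole purpose of inserting $\sigma_{m,\ell}$ in the diagonal structure~\eqref{eqdiagact} is to reconcile this against the single lexicographical convention defining $\rho$. Once these conventions are aligned, the remainder is pure operad arithmetic.
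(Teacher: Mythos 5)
Your proposal takes essentially the same approach as the paper: it identifies the required square with diagram~\eqref{eqint} and verifies the key assertion preceding the proposition that both composites realize the action of $\rho(a,c)$, with $\sigma_{m,\ell}$ reconciling the opposite block orderings. The paper simply declares the result an immediate consequence of~\eqref{eqint}; your write-up correctly fills in the little-cubes arithmetic that the paper leaves implicit.
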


The only obstacle to extending the previous proposition to partial
$\LC$-algebras is understanding $A_{m}$ as a partial $\LC$-algebra,
and the only obstacle here is understanding $A_{m}$ as a partial power
system.  We overcome these obstacles in the following definition.

\begin{defn}\label{defppspower}
For $X$ an op-lax power system and $m>0$, let $X^{[m]}$ be the op-lax
power system with $X^{[m]}_{p}=X_{pm}$, symmetric group action induced
by block permutation (with blocks of size $m$), and structure maps 
\[
\lambda_{p,q}\colon X^{[m]}_{p+q}\to X^{[m]}_{p}\sma_{R}X^{[m]}_{q}
\]
induced from the structure map $\lambda_{pm,qm}$ for $X$.  Then
$X^{[1]}=X$, and we let $X^{[0]}=R$.
\end{defn}

We note that when $X$ is a partial power system, $X^{[m]}$ is as well.
For a partial $\LC$-algebra $A$, the partial power system
$A^{[m]}$ obtains a ``diagonal'' partial $\LC$-algebra structure from
the $\LC$-action on $A$:  The element
\[
(c_{1},\dotsc,c_{j})\in \LC(\ell_{1})\times \dotsb \times \LC(\ell_{j})
\]
induces the map $A^{[m]}_{\ell}\to A^{[m]}_{j}$ (for
$\ell=\ell_{1}+\dotsb +\ell_{j}$) given by 
\begin{equation}\label{eqperm}
\sigma_{m,j}^{-1}\circ
(c_{1},\dotsc,c_{j},\dotsc,c_{1},\dotsc,c_{j})\circ
\sigma_{m,\ell}\colon A_{\ell m}\to A_{jm}
\end{equation}
(with $c_{1},\dotsc,c_{j}$ repeated $m$ times) where $\sigma_{m,k}$
denotes the permutation in $\Sigma_{km}$ that rearranges the $k$
blocks of $m$ into $m$ blocks of $k$, as 
in~\eqref{eqdiagact}.  Given an element $c$ in $\LC(m)$, we can also
make $c$ into a map of partial power systems $A^{[m]}\to A$,
by defining the map $c_{\ell}$ on the $\ell$-th partial power level to be
$(c,\dotsc,c)$, i.e., $c$ repeated $\ell$ times, without permutations.  Then
regarding 
\[
(a_{1},\dotsc,a_{j})\in \LC[1](\ell_{1})\times \dotsb \times \LC[1](\ell_{j})
\]
as a partial $\LC[1]$-algebra structure map and using the map of
partial power systems $\embl(c)$ for $c\in \LC[n-1](m)$, the
following interchange diagram commutes.
\[
\xymatrix@C+2pc{%
A^{[m]}_{\ell}\ar[d]_{(a_{1},\dotsc,a_{j})}
\ar[r]^{\embl(c)_{\ell}}
&A_{\ell}\ar[d]^{(a_{1},\dotsc,a_{j})}\\
A^{[m]}_{j}\ar[r]_{\embl(c)_{j}}
&A_{j}
}
\]
This is the partial analogue of~\eqref{eqint} and proves the following
partial analogue of Proposition~\ref{proptrueint}.

\begin{prop}\label{propint}
Let $A$ be a partial $\LC$-algebra.  For any $c$ in $\LC[n-1](m)$, the
map $A^{[m]}\to A$ induced by 
$\embl(c)$ is a map of $\LC[1]$-algebras.
\end{prop}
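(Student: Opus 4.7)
The plan is to verify commutativity of the interchange diagram displayed just above the proposition, since once that diagram commutes for every partial $\LC[1]$-algebra structure map $(a_{1},\dotsc,a_{j})$ with $a_{i}\in\LC[1](\ell_{i})$ and $\ell=\sum\ell_{i}$, the map $\embl(c)\colon A^{[m]}\to A$ is automatically a map of partial $\LC[1]$-algebras. By the monadic description of partial $\LC$-algebras (Proposition~\ref{propmonad}), verifying commutativity reduces to comparing the two $\LC$-operadic composites producing a structure map $A_{\ell m}\to A_{j}$.

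First, I would unpack the right-then-down composite block-by-block. The map $\embl(c)_{\ell}\colon A_{\ell m}\to A_{\ell}$ is induced by applying $\embl(c)\in\LC(m)$ to each of the $\ell$ blocks of $m$ inputs, with no permutation. Post-composing with the $\LC[1]$-structure map $(a_{1},\dotsc,a_{j})$, acting through $\embf$, gives on the $i$-th output block the structure map induced by the operadic composite $\gamma(\embf(a_{i});\embl(c),\dotsc,\embl(c))\in\LC(\ell_{i}m)$. This is exactly $\rho(a_{i},c)$ in the lexicographic order of Definition~\ref{definterchange}.

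Next, I would unpack the down-then-right composite using the diagonal $\LC$-structure on $A^{[m]}$ described in~\eqref{eqperm}. After $\sigma_{m,\ell}$ rearranges the $\ell$ blocks of $m$ into $m$ blocks of $\ell$, the element $(\embf(a_{1}),\dotsc,\embf(a_{j}))$ acts $m$ times in parallel; then $\sigma_{m,j}^{-1}$ rearranges back into $j$ blocks of $m$, on which $\embl(c)_{j}$ acts by $\embl(c)$ on each block. Restricting attention to the $i$-th output block, the block permutations $\sigma_{m,\ell}$ and $\sigma_{m,j}^{-1}$ restrict precisely to the block permutation $\sigma_{m,\ell_{i}}$, so that this composite yields on the $i$-th block the structure map induced by $\gamma(\embl(c);\embf(a_{i}),\dotsc,\embf(a_{i}))\cdot\sigma_{m,\ell_{i}}\in\LC(\ell_{i}m)$.

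Equality of the two composites then reduces to the purely combinatorial identity in $\LC$
\[
\gamma(\embf(a_{i});\embl(c),\dotsc,\embl(c))
=\gamma(\embl(c);\embf(a_{i}),\dotsc,\embf(a_{i}))\cdot\sigma_{m,\ell_{i}},
\]
which is immediate from the geometric description of $\embf$, $\embl$, and $\rho$: both sides label the same collection of product subrectangles $[x^{s},y^{s}]\times[x^{k}_{1},y^{k}_{1}]\times\dotsb\times[x^{k}_{n-1},y^{k}_{n-1}]$ of $[0,1]^{n}$, the difference being only whether they are indexed lexicographically by $(s,k)$ or by $(k,s)$. The main obstacle is bookkeeping rather than conceptual: one must track how $\sigma_{m,\ell}$ and $\sigma_{m,j}^{-1}$ restrict compatibly to the block decompositions determined by $\ell=\sum\ell_{i}$, and one must check the degenerate cases in which some $\ell_{i}=0$ (so $a_{i}\in\LC[1](0)$ is a unit and the $i$-th block of $\embl(c)$-factors is empty), where the corresponding composite reduces to $\embl(c)$ acting on the constant summand induced by the unit, making commutativity trivial on that block.
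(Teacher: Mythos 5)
Your proposal is correct and follows essentially the same route as the paper, which exhibits the interchange diagram you verify and simply notes it is the partial analogue of~\eqref{eqint}, leaving the operadic bookkeeping implicit. The block-by-block check you supply, culminating in the identity $\gamma(\embf(a_{i});\embl(c),\dotsc,\embl(c)) = \gamma(\embl(c);\embf(a_{i}),\dotsc,\embf(a_{i}))\cdot\sigma_{m,\ell_{i}}$ (together with the operadic unit axiom in the degenerate $\ell_i=0$ case), is exactly the content the paper is appealing to when it cites~\eqref{eqint}.
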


Returning to the cyclic bar construction, we now have the terminology
and notation to describe the multiplicative structure and prove
Theorems~\ref{mainthh} and~\ref{mainrel}.  Even in the
case when we start with a true $\LC$-algebra $A$, $THH^{R}(A)$ will
only have a partial multiplicative structure.  We construct the
op-lax power system as follows.

\begin{cons}\label{consthhps}
For a partial $\LCR[1]$-algebra $A$, we define the op-lax power
system $THH^{R}(A)$ by $(THH^{R}(A))_{m}=THH^{R}(\MoAlg{A^{[m]}})$.
\end{cons}

When each of the simplicial objects $THH^{R}\subdot(\MoAlg{A^{[m]}})$
is proper, this defines a partial power system, since geometric
realization then preserves the degreewise weak equivalences.  With
just this mild hypotheses, we can now prove the following version of
Theorem~\ref{mainrel}.

\begin{thm}\label{thmmainrel}
For $R$ a commutative
$S$-algebra, let $A$ be a partial $\LCR$-algebra such that the op-lax
power system $THH^{R}(A)$ is a partial power system. Then $THH^{R}(A)$
is a  partial $\LCR[n-1]$-algebra, naturally in maps of
$\LCR$-algebra maps in $A$.
\end{thm}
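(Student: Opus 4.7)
The plan is to build the required partial $\LCR[n-1]$-algebra structure from the interchange of the $\LC[1]$- and $\LC[n-1]$-actions inherited from the $\LC$-structure on $A$. For each operadic element $(c_{1},\dotsc,c_{r})\in \LC[n-1](j_{1})\times \dotsb \times \LC[n-1](j_{r})$ with $m=j_{1}+\dotsb+j_{r}$, I would produce the required structure map $THH^{R}(A)_{m}\to THH^{R}(A)_{r}$ in three stages: first construct a map of partial $\LC[1]$-algebras $\psi_{c_{1},\dotsc,c_{r}}\colon A^{[m]}\to A^{[r]}$; then apply the Moore functor of Theorem~\ref{thmmoore} to obtain a map of partial associative $R$-algebras $\MoAlg{A^{[m]}}\to\MoAlg{A^{[r]}}$; and finally apply the cyclic bar construction of Construction~\ref{consncyalg} to obtain the desired $R$-module map $THH^{R}(\MoAlg{A^{[m]}})\to THH^{R}(\MoAlg{A^{[r]}})$.

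The heart of the construction is the map $\psi_{c_{1},\dotsc,c_{r}}$. Define its level-$p$ component $A_{pm}\to A_{pr}$ to be the diagonal $\LC$-action of $(\embl(c_{1}),\dotsc,\embl(c_{r}))$ on the partial $\LC$-algebra $A^{[p]}$ via formula~\eqref{eqperm}. Compatibility with the $\lambda$-maps is immediate from the partial power system structure of $A$, and the required symmetric-group equivariance is inherited from the equivariance of the partial $\LC$-action. The key point---that $\psi$ is a map of partial $\LC[1]$-algebras, where $A^{[k]}$ is given the $\LC[1]$-structure via $\embf$ and the diagonal $\LC$-structure---is precisely the content of the interchange diagram preceding Proposition~\ref{propint}: the $\LC[1]$-action uses first-coordinate cubes while $\psi$ is built from last-coordinate cubes, and these strictly commute because the corresponding coordinate directions in $[0,1]^{n}$ are orthogonal (cf.~Definition~\ref{definterchange}). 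The case $r=1$ is exactly Proposition~\ref{propint}, and the general case follows in the same way by treating each $c_{i}$ separately and concatenating.

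With $\psi$ in hand, the remaining steps are formal. The Moore and cyclic bar constructions are functorial, and continuity in $(c_{1},\dotsc,c_{r})$, equivariance for $\Sigma_{j_{1}}\times\dotsb\times\Sigma_{j_{r}}$, and the target $\Sigma_{r}$-action all propagate from the corresponding properties of the partial $\LC$-action on $A$. To conclude, one checks that these assembled maps form an action of the monad $\bLC[n-1]$ on the partial power system $THH^{R}(A)$: the unit $\id\in \LC[n-1](1)$ acts as the identity, operadic composition is respected (by associativity of the diagonal $\LC$-action), and compatibility with the given $\lambda$-maps of the partial power system $THH^{R}(A)$ holds degreewise because it holds at the level of the partial power systems $A^{[m]}$. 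Naturality in $\LCR$-algebra maps is built into every step.

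The main obstacle is the permutation bookkeeping when verifying operadic composition: the block permutations $\sigma_{m,k}$ appearing in~\eqref{eqperm} must be tracked through the composition formula in $\LC$, so that iterated application of $\psi$ for composable operations in $\LC[n-1]$ agrees with $\psi$ for the composite operation. The underlying content is morally just the orthogonality of the first and last coordinate directions in $\LC$ (as encoded by the interchange map $\rho$ of Definition~\ref{definterchange}), but the explicit verification is a somewhat lengthy and permutation-heavy diagram chase with no hidden conceptual obstacles.
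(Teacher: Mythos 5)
Your overall approach is exactly the paper's: build a map of partial $\LC[1]$-algebras $A^{[m]}\to A^{[r]}$ from $(c_1,\dotsc,c_r)$ via the interchange with $\embl$, apply the (functorial) Moore construction, then the cyclic bar construction, and verify the monad axioms. But there is a specific formula slip that would actually break the construction if taken literally: you define the level-$p$ component of $\psi_{c_1,\dotsc,c_r}\colon A^{[m]}\to A^{[r]}$ to be ``the diagonal $\LC$-action of $(\embl(c_1),\dotsc,\embl(c_r))$ on $A^{[p]}$ via formula~\eqref{eqperm}.'' That is the wrong construction. Formula~\eqref{eqperm} is the diagonal partial $\LC$-algebra structure map on a fixed $A^{[k]}$ (used for the \emph{vertical} arrows in the interchange diagram preceding Proposition~\ref{propint}); it gives a map $(A^{[p]})_m\to(A^{[p]})_r$, i.e.\ $A_{mp}\to A_{rp}$, conjugated by the block-transpositions $\sigma_{p,m}$ and $\sigma_{p,r}^{-1}$. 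The map you actually need for the \emph{horizontal} arrows is the other construction in that same passage of the paper: for $c\in\LC(m)$, the level-$\ell$ component of the map of partial power systems $A^{[m]}\to A$ is $(c,\dotsc,c)$ repeated $\ell$ times \emph{without} permutations, and its generalization to $(c_1,\dotsc,c_r)$ is just $(\embl(c_1),\dotsc,\embl(c_r))$ applied to each of the $p$ blocks of $m$.

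The distinction is not cosmetic. The map you wrote down via~\eqref{eqperm} does not commute with the structure maps $\lambda_{p'm,p''m}$ of the partial power system $A^{[m]}$ (it commutes instead with the $\lambda_{m'p,m''p}$ of $A^{[p]}$, which is irrelevant here). A small check makes this concrete: take $n=2$, $m=2$, $r=1$, $p=2$, $c\in\LC[1](2)$. Your formula gives $\psi_2 = (\embl(c),\embl(c))\circ\sigma_{2,2}\colon A_4\to A_2$, so $\lambda_{1,1}\circ\psi_2$ pairs the first with the third and the second with the fourth smash factors, while $(\psi_1\sma_R\psi_1)\circ\lambda_{2,2}$ pairs first with second and third with fourth; these disagree. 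The permutation-free construction, by contrast, respects the block decomposition by construction and is the one that satisfies the displayed interchange diagram with the $\LC[1]$-action (which genuinely does use~\eqref{eqperm} for its verticals). Your citation of the $r=1$ case as ``exactly Proposition~\ref{propint}'' is the tell that you intended the right map; once you replace~\eqref{eqperm} with the permutation-free blockwise action, the rest of your argument goes through as written and matches the paper's proof.
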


\begin{proof}
Applying Proposition~\ref{propint}, we see that every element $c$ in
$\LC[n-1](m)$ induces a map of partial associative $R$-algebras
\[
\MoAlg{A^{[m]}}\to \MA.
\]
More generally, the argument for Propositions~\ref{proptrueint}
and~\ref{propint} implies that elements $c_{1},\dotsc,c_{r}$ of
$\LC[n-1](j_{i})$ induce a map of partial $\LC[1]$-algebras
\[
A^{[j_{1}+\dotsb +j_{r}]}\to A^{[r]}
\]
and hence a map of partial associative $R$-algebras
\begin{equation}\label{eqactmoore}
\MoAlg{A^{[j_{1}+\dotsb+j_{r}]}}\to \MoAlg{A^{[r]}}.
\end{equation}
Restricting to the $p$ power and putting these maps together for all
elements of $\LC[n-1]$ at once, we get maps of $R$-modules
\begin{equation}\label{eqlevel}
(\LC[n-1](j_{1})\times \dotsb \times \LC[n-1](j_{r}))_{+}
\sma_{\Sigma_{j_{1}}\times \dotsb\times \Sigma_{j_{r}}}
\MoAlg{A^{[j_{1}+\dotsb+j_{r}]}}_{p}\to \MoAlg{A^{[r]}}_{p}.
\end{equation}
Because the maps~\eqref{eqactmoore} are maps of partial power systems,
the maps~\eqref{eqlevel} commute with the $\Sigma_{p}$-action.
Because the maps~\eqref{eqactmoore} are maps of partial associative
$R$-algebras, the maps~\eqref{eqlevel} commute with the simplicial
face maps when we view $\MoAlg{A^{[m]}}_{p}$ as
$(THH^{R}_{p-1}(A))_{m}$ (simplicial degree $p-1$ in the $m$-th
power).  Likewise, the maps~\eqref{eqlevel} commute with the
degeneracy operations.  Commuting the smash product and geometric
realization, we therefore get a map of partial power systems
\begin{equation}\label{eqact}
\bLC[n-1] THH^{R}(A) \to THH^{R}(A) .
\end{equation}
Using the associativity and unity of the
$\LC$-action on $A$ and the fact that $\embl$ is a homomorphism of
operads, a straightforward check  shows that \eqref{eqact} defines a
partial $\LC[n-1]$-algebra structure on $THH^{R}(A)$.
\end{proof}

As a special case, when $n=2$, $THH^{R}(A)$ is a $\LC[1]$-algebra.
Iterating $THH^{R}$ would mean applying $THH^{R}$ to the Moore partial
algebra $\MoAlg{THH^{R}(A)}$.  This partial associative $R$-algebra
admits a closed description in terms of the colimit diagrams that
construct the Moore partial algebra.  The simplification in terms of
the ``main'' submodules $P^{m}_{+}\sma A_{m}\subset \MA[m]$ captures
the main ideas while avoiding the complications.  Since geometric
realization commutes with the Moore partial algebra construction, it
is enough to describe the construction on each simplicial level.  In
these terms, the main submodule of $\MoAlg{THH^{R}_{p}(A)}_{m}$
(simplicial degree $p$, partial power $m$) is
\[
P^{(p+1)+m}_{+}\sma A_{(p+1)m}.
\]
Instead of having a length for each $A$ power, this rather has
a length for each column and each row, where we think of $A_{(p+1)m}$
as organized into $m$ rows of $p+1$ columns.  The main submodule of
$THH^{R}(THH^{R}(A))_{1}$ in bisimplicial degree $p,q$ then is
\[
P^{(p+1)+(q+1)}_{+}\sma A_{(p+1)(q+1)}.
\]
The face maps in the $p$-direction add the column lengths and
concatenate squares (in $\LC[2]$) in the horizontal direction, while
the face maps in the $q$-direction add the row length and concatenate
squares in the vertical direction.

More generally and precisely, for a partial $\LC$-algebra, we give the
following closed construction of the $n$-th iterate of $THH^{R}$

\begin{cons}\label{consitthh}
Let $A$ be a partial $\LC$-algebra. The $n$-th iterate of $THH^{R}$ is
isomorphic to the geometric realization
of the following $n$-fold simplicial set $T^{n}(A)$.  In
multisimplicial degree $p_{1},\dotsc,p_{n}$, $T^{n}(A)$ is the
colimit of the diagram indexed on $\oT^{(p_{1}+1)+\dotsb+(p_{n}+1)}$
that at the object
\[
\vec x=(x_{1},\dotsc,x_{q})=
(x^{1}_{0},\dotsc,x^{1}_{p_{1}},x^{2}_{0},\dotsc,x^{2}_{p_{2}},
\dotsc,x^{n}_{0},\dotsc,x^{n}_{p_{n}})
\]
is the $R$-module
\[
(P_{x_{1}}\times \dotsb \times P_{x_{q}})_{+}\sma 
A_{m_{1}\dotsb m_{n}},
\]
where $m_{i}=\ma(x^{i}_{0},\dotsc,x^{i}_{p_{i}})$ (the number of $a$'s
among the $x^{i}_{j}$'s).  The degeneracies are induced by inserting
$b$ in the appropriate spot in $\vec x$ with zero as the associated
length in $P_{b}=\bar P$.  In the $i$-th simplicial direction, the
$j$-th face map (for $j<p_{i}$) is induced as follows.  The lengths $(r,s)\in
P_{x^{i}_{j}}\times 
P_{x^{i}_{j+1}}$ add.  If $x^{i}_{j}=x^{i}_{j+1}=a$, we use the
element  
\[
c=[0,1]^{i-1}\times \gamma_{r,s}\times [0,1]^{n-i}
\]
of $\LC(2)$, applied $m_{1}\dotsb \hat m_{i}\dotsb m_{n}$ times, with
the appropriate permutations (as in~\eqref{eqperm}) to produce the map 
\[
A_{m_{1}\dotsb m_{n}}\to A_{m_{1}\dotsb (m_{i}-1)\dotsb m_{n}}.
\]
If one of $x^{i}_{j},x^{i}_{j+1}$ is $a$, we use the appropriate
element 
\begin{align*}
&([0,1]^{i-1} \times [0,r/(r+s)]\times [0,1]^{n-i}),
\quad \text{or}\\
&([0,1]^{i-1} \times [r/(r+s),1]\times [0,1]^{n-i})
\end{align*}
of $\LC(1)$.  We use the identity on the $A$'s factor if neither of
$x^{i}_{j},x^{i}_{j+1}$ is $a$.  The $p_{i}$-th 
face map is the appropriate permutation followed by the zeroth face map.
\end{cons}

Using the concrete construction above, we can see that the
multisimplicial object $T^{n}\subdot(A)$ has the correct homotopy type
when $A$ is tidy, and hence in this case a thickened realization will
capture the correct homotopy type for iterated $THH^{R}$.  Moreover,
we can see that $T^{n}\subdot(A)$ is proper under reasonable
hypotheses on $A$ like the one discussed above or its iterable
generalization, which we now discuss.

For the iterable generalization of
Proposition~\ref{proptrueprop}, we use the
following  technical hypothesis.  In it, $\oI'$ is as in the proof of 
Proposition~\ref{proptrueprop}: In the context of symmetric spectra or
orthogonal spectra, $\oI'$ is the collection of generating
cofibrations in the model structure; in the context of EKMM
$S$-modules, $\oI'$ also includes 
the maps $R\sma S^{j-1}_{+}\to R\sma B^{j}_{+}$.

\begin{hyp}[Technical hypothesis on a partial {$\LC[1]$}-algebra $A$]
\label{hypprop}
For all $m$, the maps $R\to A_{m}$ are
relative $\oI'$-cell complexes (for $\oI'$ as above) such that each of the
$m$ maps 
$A_{m-1}\to A_{m}$ (induced by the action of $\LC[1](0)$) is the
inclusion of a relative subcomplex.
\end{hyp}

\begin{prop}\label{propitprop}
Working in the category of $R$-modules
of symmetric spectra, orthogonal spectra, or EKMM $S$-modules, let
$A$ be a partial $\LCR[n]$-algebra satisfying Hypothesis~\ref{hypprop}.
Then
$\MA$ is tidy, $THH^{R}\subdot(\MoAlg{A^{[m]}})$ is a proper simplicial
object for all $m$, and the op-lax power system $THH^{R}(A)$ is a
partial power system.
Moreover, if $n\geq 2$, then the partial
$\LCR[n-1]$-algebra $THH^{R}(A)$
also satisfies Hypothesis~\ref{hypprop}.
\end{prop}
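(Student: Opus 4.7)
The plan is to establish in sequence the four claims: (a) tidyness of $\MA$, (b) properness of $THH^{R}\subdot(\MoAlg{A^{[m]}})$ for each $m$, (c) that $THH^{R}(A)$ is a partial power system, and (d) propagation of Hypothesis~\ref{hypprop} to $THH^{R}(A)$ when $n\geq 2$.

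For (a), I would observe that Hypothesis~\ref{hypprop} makes $R\to A_{1}$ a relative $\oI'$-cell complex, hence $A_{1}$ is cofibrant over $R$. In the pushout defining $\MA[1]$, the map $P_{+}\sma R\to P_{+}\sma A_{1}$ is then a cofibration, so $R \to \MA[1]$ also acquires a relative $\oI'$-cell structure. Cofibrancy of $\MA[1]$ implies its point-set smash powers compute derived smash powers; since $\MA[m]\to A_{m}$ is a $\Sigma_{m}$-equivariant homotopy equivalence and $A$ is itself tidy by the same cofibrancy argument applied to each $A_{m}$, the comparison map for $\MA$ is an equivalence. For (b), I would first promote Hypothesis~\ref{hypprop} to the partial associative $R$-algebra $\MoAlg{A^{[m]}}$: because the partial $\LC[1]$-algebra structure on $A^{[m]}$ is the diagonal of the $\LC$-structure on $A$ via the first-coordinate embedding $\embf$, each $\LC[1](0)$-induced map $A^{[m]}_{p-1}\to A^{[m]}_{p}$ decomposes as iterated unit-insertions in $A$, each a subcomplex inclusion by hypothesis. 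Combined with the pushout construction of each $\MoAlg{A^{[m]}}_{p}$, this renders the degeneracies of $THH^{R}\subdot(\MoAlg{A^{[m]}})$ as subcomplex inclusions in a fixed relative $\oI'$-cell structure, yielding properness as in the proof of Proposition~\ref{proptrueprop}.

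Claim (c) follows quickly from (a) and (b): the structure maps $\lambda_{m,n}$ of the op-lax power system $THH^{R}(A)$ are induced by the partial power system maps for $A^{[\ssdot]}$ applied in each simplicial degree, hence are degreewise weak equivalences; properness ensures that geometric realization preserves these equivalences.

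The most delicate step is (d). I would need to verify that the cell structures on each $\MoAlg{A^{[m]}}_{p}$ assemble through the realization into a relative $\oI'$-cell structure on $(THH^{R}(A))_{m}$, and that the $\LC[n-1](0)$-induced map $(THH^{R}(A))_{m-1}\to (THH^{R}(A))_{m}$ is a subcomplex inclusion. Unwinding via the last-coordinate embedding $\embl$ and the diagonal formula~\eqref{eqperm}, this map traces back to unit-insertions in $A$ at designated slots, each a subcomplex inclusion by Hypothesis~\ref{hypprop}. The main obstacle I anticipate is bookkeeping the cell structure consistently across the multiple constructions (pushouts for $\MoAlg{-}$, the combinatorial partial power $A^{[m]}$, and geometric realization of $THH^{R}$), and confirming that both the cell property and the subcomplex property propagate through every stage.
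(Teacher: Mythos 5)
Your proposal is correct and follows essentially the same strategy as the paper's (very terse) proof, which simply calls the remaining work ``a straightforward cell argument'' after one key observation. That observation—which your step (d) uses implicitly but should state explicitly—is that because $\LC(0)$ is a single point, the unit-insertion maps $A_{m-1}\to A_{m}$ induced by $\LC[1](0)$ via the first-coordinate embedding coincide with those induced by $\LC[n-1](0)$ via the last-coordinates embedding; this is precisely what lets Hypothesis~\ref{hypprop}, phrased in terms of the $\LC[1]$-structure on $A$, control the subcomplex inclusions you need when building the relative $\oI'$-cell structure on $THH^{R}(A)$ as a $\LC[n-1]$-algebra.
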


\begin{proof}
Since $\LC(0)$ is a point, the maps $A_{m-1}\to A_{m}$ induced by
$\LC[1](0)$ (using the first coordinate embedding) coincide with the
maps $A_{m-1}\to A_{m}$ induced by $\LC[n-1](0)$ (using the last
coordinates embedding). The proof is now a straightforward cell argument
in terms of the construction of $\MA$ and $THH^{R}$.
\end{proof}

We now have 
Theorem~\ref{mainrel} as an immediate corollary of the previous
proposition and Theorem~\ref{thmmainrel}.
Theorem~\ref{mainthh} is the special case $R=S$.

\section{The Bar Construction for Augmented $\LC$-Algebras} 
\label{secbar}

In this section we study the reduced version of $THH$, usually called
the bar construction,  defined for an augmented partial $\LC$-algebra.
We begin with the definition of augmented partial $\LC$-algebras and
augmented partial associative $R$-algebras.

\begin{defn}
An \term{augmented} partial $\LCR$-algebra consists of a partial $\LCR$-algebra $A$
together with a map of partial $\LCR$-algebras $\epsilon \colon A\to
R$ called the \term{augmentation}.  Likewise, an augmented partial
associative $R$-algebra consists of a partial associative $R$-algebra
$A$ and a map of partial associative $R$-algebras $A\to R$.
\end{defn}

In general for an op-lax power system $X$, given a map $X\to R$, we
get a pair of maps $X_{m}\to X_{m-1}$ as
composites
\[
\begin{gathered}
e_{1}\colon X_{m}\to X_{1}\sma_{R}X_{m-1}\to R\sma_{R}X_{m-1}\iso X_{m-1}\\
e_{m}\colon  X_{m}\to X_{m-1}\sma_{R}X_{1}\to X_{m-1}\sma_{R}R\iso X_{m-1}
\end{gathered}
\]
using $\lambda_{1,m-1}$ and $\lambda_{m-1,1}$.  We think of $e_{1}$
and $e_{m}$ as applying the augmentation to the first and last spots
in $X_{m}$.  Note that $e_{m}$ can
be obtained from $e_{1}$ and the permutation actions on $X_{m}$ and
$X_{m-1}$, and using permutations like this, we can define analogous
maps $e_{2},\dotsc,e_{m-1}$ that apply the augmentation to an
arbitrary spot in $X_{m}$.

In the case of an augmented partial associative $R$-algebra $A$, the
maps $e_{j}$  make the following diagram commute.
\[
\xymatrix{%
A_{m}\ar[r]^{e_{1}}\ar[d]_{(\mu,\id,\dotsc,\id)}&A_{m-1}\ar[d]^{e_{1}}
&&A_{m}\ar[r]^{e_{m}}\ar[d]_{(\id,\dotsc,\id,\mu)}&A_{m-1}\ar[d]^{e_{m-1}}\\
A_{m-1}\ar[r]_{e_{1}}&A_{m-2}&&A_{m-1}\ar[r]_{e_{m-1}}&A_{m-2}
}
\]
We use these maps in the following construction.

\begin{cons}\label{consbar}
For an augmented partial associative $R$-algebra $A$, let $BA$ be the
geometric realization of the simplicial $R$-module $B\subdot A$ that is $A_{m}$ in
simplicial degree $m$, with degeneracy maps $s_{i}$ induced by the action
of $\oA(0)$, with face maps $d_{i}$ for $1\leq i\leq m-1$ induced
by the action of $\mu \in \oA(2)$, and with face maps $d_{0}=e_{1}$ and
$d_{m}=e_{m}$ as defined above.
\end{cons}

As in the previous section, the construction may not have the correct
homotopy type without the additional assumption that $A$ is tidy and
an additional assumption ensuring that the simplicial object is proper.

For $A$ a partial augmented $\LC[n]$-algebra, we set $BA=B\MA$, and we
extend $BA$ to a partial power system by setting 
$BA_{m}=B(A^{[m]})=B(\MoAlg{A^{[m]}})$.  The trick used in the proof of
Theorems~\ref{mainthh} and~\ref{mainrel} now constructs the
$\LC[n-1]$-structure in the following theorem.

\begin{thm}\label{thmbarmult}
Let $A$ be an augmented partial $\LC$-algebra $A$ such that $B\subdot
A^{[m]}$ is a proper simplicial object for all $m$.  Then the bar
construction 
$BA$ is naturally an augmented partial
$\LC[n-1]$-algebra. 
\end{thm}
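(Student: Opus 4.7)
The plan is to follow the proof of Theorem~\ref{thmmainrel} essentially verbatim, substituting the bar construction $B$ of Construction~\ref{consbar} for the cyclic bar construction $THH^{R}$. The core input is Proposition~\ref{propint}: a tuple $(c_{1},\dotsc,c_{r})$ in $\LC[n-1](j_{1})\times \dotsb \times \LC[n-1](j_{r})$ induces, via the last-coordinates embedding $\embl$, a map of partial $\LC[1]$-algebras $A^{[j_{1}+\dotsb+j_{r}]}\to A^{[r]}$, and hence via the Moore functor $M$ (Theorem~\ref{thmmoore}) a map of partial associative $R$-algebras
\[
\MoAlg{A^{[j_{1}+\dotsb+j_{r}]}}\to \MoAlg{A^{[r]}}.
\]
First I would verify that when $A$ is augmented these maps are maps of \emph{augmented} partial associative $R$-algebras: the augmentation $\epsilon \colon A\to R$ is a map of partial $\LC$-algebras, so applying $(-)^{[m]}$ and composing with the unit isomorphisms of $R$ yields a map of partial $\LC[1]$-algebras $A^{[m]}\to R$, and functoriality of the interchange construction and of $M$ shows that the induced maps on Moore algebras commute with the resulting augmentations.

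Applying $B$ to these maps and wedging over operad points gives, at each simplicial level $p$, $\Sigma_{j_{1}}\times \dotsb \times \Sigma_{j_{r}}$-equivariant maps
\[
(\LC[n-1](j_{1})\times \dotsb \times \LC[n-1](j_{r}))_{+}
\sma_{\Sigma_{j_{1}}\times \dotsb \times \Sigma_{j_{r}}}
B_{p}(\MoAlg{A^{[j_{1}+\dotsb+j_{r}]}})\to B_{p}(\MoAlg{A^{[r]}})
\]
which are compatible with the simplicial operators of $B\subdot$. Commuting the smash with operad spaces past geometric realization (valid under the properness hypothesis) assembles these into a map $\bLC[n-1] BA\to BA$. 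The properness hypothesis also guarantees that $BA$ is a partial power system rather than merely op-lax, because the degreewise $\Sigma_{m+n}$-equivariant homotopy equivalences $\MoAlg{A^{[m+n]}}_{p}\to \MoAlg{A^{[m]}}_{p}\sma_{R}\MoAlg{A^{[n]}}_{p}$ realize to weak equivalences. Associativity and unitality of the $\LC[n-1]$-action on $BA$ follow from the operad axioms for $\LC[n-1]$, the fact that $\embl$ is a homomorphism of operads, and the functoriality of $M$ and $B$, exactly as at the end of the proof of Theorem~\ref{thmmainrel}. An augmentation $BA\to R$ is induced by naturality: applying $B$ to $\MA\to MR$ lands in $B(MR)$, which admits a natural map of partial associative $R$-algebras to the trivial bar construction on $R$, i.e., $R$ itself; this augmentation is compatible with the $\LC[n-1]$-structure because every step of the construction is natural in maps of augmented partial $\LC$-algebras.

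The main obstacle will be bookkeeping compatibility with the outer face maps $d_{0}=e_{1}$ and $d_{m}=e_{m}$ of $B\subdot$, which use the augmentation rather than the multiplication. Concretely, one must check that the interchange-induced map $\MoAlg{A^{[j]}}\to \MoAlg{A^{[r]}}$ commutes with the augmentation-collapse operators $e_{1}$ and $e_{m}$ at each simplicial level. This reduces to the claim that $\embl(c)$ commutes with the maps $e_{1}, e_{m}$ on the partial power systems $A^{[m]}$, which in turn follows from the fact that $\epsilon$ is a map of partial $\LC$-algebras, so the interchange diagram of Proposition~\ref{propint} applies with $\epsilon$ in the role of a structure map just as it does with multiplication operators. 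Once this compatibility is established, the remaining verifications mirror those in Theorem~\ref{thmmainrel} step for step.
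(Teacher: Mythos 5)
Your proposal is correct and takes essentially the same approach the paper intends: the paper simply states that ``the trick used in the proof of Theorems~\ref{mainthh} and~\ref{mainrel} now constructs the $\LC[n-1]$-structure,'' and you supply the details of that trick, correctly identifying the one genuinely new verification (compatibility with the outer face maps $d_0 = e_1$, $d_m = e_m$), which reduces to the fact that the interchange-induced maps $\MoAlg{A^{[j]}}\to\MoAlg{A^{[r]}}$ preserve augmentations because $\epsilon$ is a map of partial $\LC$-algebras and hence commutes with the last-coordinates $\LC[n-1]$-action.
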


The properness hypothesis holds in particular when $A$ satisfies
Hypothesis~\ref{hypprop} or the hypothesis of Proposition~\ref{proptrueprop}. 
The augmentation in the theorem is the map $BA\to R$ induced by the
augmentations 
$\MoAlg{A^{[m]}}_{p}\to R$.   We now give a detailed
description of the iterated bar construction along the lines of
Construction~\ref{consitthh}.

\begin{cons}
Let $A$ be an augmented partial $\LC$-algebra. The $n$-th iterate of
the bar construction is 
isomorphic to the geometric realization
of the following $n$-fold simplicial set $B^{n}(A)$.  In
multisimplicial degree $p_{1},\dotsc,p_{n}$, $B^{n}(A)$ is the
colimit of the diagram indexed on $\oT^{p_{1}+\dotsb+p_{n}}$
that at the object
\[
\vec x=(x_{1},\dotsc,x_{q})=
(x^{1}_{1},\dotsc,x^{1}_{p_{1}},x^{2}_{1},\dotsc,x^{2}_{p_{2}},
\dotsc,x^{n}_{1},\dotsc,x^{n}_{p_{n}})
\]
is the $R$-module
\[
(P_{x_{1}}\times \dotsb \times P_{x_{q}})_{+}\sma 
A_{m_{1}\dotsb m_{n}},
\]
where $m_{i}=\ma(x^{i}_{1},\dotsc,x^{i}_{p_{i}})$ (the number of $a$'s
among the $x^{i}_{j}$'s).  The degeneracies are induced by inserting
$b$ in the appropriate spot in $\vec x$ with zero as the associated
length in $P_{b}=\bar P$.  In the $i$-th simplicial direction, the
$j$-th face map (for $0<j<p_{i}$) is induced as follows.  The lengths $(r,s)\in
P_{x^{i}_{j}}\times 
P_{x^{i}_{j+1}}$ add.  If $x^{i}_{j}=x^{i}_{j+1}=a$, we use the
element  
\[
c=[0,1]^{i-1}\times \gamma_{r,s}\times [0,1]^{n-i}
\]
of $\LC(2)$, applied $m_{1}\dotsb \hat m_{i}\dotsb m_{n}$ times, with
the appropriate permutations (as in~\eqref{eqperm}) to produce the map 
\[
A_{m_{1}\dotsb m_{n}}\to A_{m_{1}\dotsb (m_{i}-1)\dotsb m_{n}}.
\]
If one of $x^{i}_{j},x^{i}_{j+1}$ is $a$, we use the appropriate
element 
\begin{align*}
&([0,1]^{i-1} \times [0,r/(r+s)]\times [0,1]^{n-i}),
\quad \text{or}\\
&([0,1]^{i-1} \times [r/(r+s),1]\times [0,1]^{n-i})
\end{align*}
of $\LC(1)$.  We use the identity on the $A$'s factor if neither of
$x^{i}_{j},x^{i}_{j+1}$ is $a$.  The $0$-th and $p_{i}$-th 
face maps are obtained by application of the appropriate maps $e_{j}$.
\end{cons}

The category of $R$-modules under and over $R$ has $R$ as both an
initial and final object.  As a consequence, this category admits a
tensor with based spaces: For $M$ an $R$-module under and over $R$ and
$X$ a based space, the tensor of $M$ with $X$, $M\hotimes X$, is
formed as a pushout
\[
\xymatrix{%
R\sma X_{+}\cup_{R}M \ar[r]\ar[d]&M\sma X_{+}\ar[d]\\
R\ar[r]&M\hotimes X.
}
\]
In particular, we have a suspension in the category of $R$-modules
under and over $R$ that we denote as $\Sigma_{R}$.  For an augmented
partial $R$-algebra $A$, the first partial power $A_{1}$ is an $R$-module under
and over $R$, as is the first partial power $\MA[1]$ of the Moore
partial algebra.  The unit maps $\MA[1]\to \MA[m]$ induce maps 
\[
\underbrace{\MA[1]\cup_{R}\dotsb \cup_{R}\MA[1]}_{m\text{ factors}}
\to \MA[m],
\]
which together induce a map of simplicial objects
\[
\MA[1]\hotimes S^{1}\subdot \to B\subdot A,
\]
which on geometric realization induces a map $\Sigma_{R}\MA[1]\to BA$,
natural in $A$.  Using the explicit description of the multisimplicial
object $B^{n}A$ above, the simplicial map above generalizes to a
multi-simplicial map
\[
\MA[1]\hotimes (S^{1}\subdot\sma \dotsb \sma S^{1}\subdot)\to 
B^{n}_{\ssdot,\dotsc,\ssdot}A
\]
as follows.  Thinking of an element of $S^{1}_{p}$ as an element of
the based set $\{0,\dotsc,p\}$, a non-basepoint element of
$S^{n}_{p_{1},\dotsc,p_{n}}$ is an $n$-tuple $\vec
j=(j_{1},\dotsc,j_{n})$ with $1\leq j_{i}\leq p_{i}$.  We have one
copy of $\MA[1]$ in $\MA[1]\hotimes S^{n}_{p_{1},\dotsc,p_{n}}$ for
each $\vec j$, which we map into $B^{n}_{p_{1},\dotsc,p_{n}}A$ by a
map induced by a map of diagrams.  For fixed $\vec j$, we have the
functor $\oT\to \oT^{p_{1}+\dotsb +p_{n}}$ sending $x$ to the object
$\vec x$ where $x^{i}_{j_{i}}=x$ and $x^{i}_{k}=b$ for $k\neq j_{i}$.
We use the map of diagrams compatible with this functor sending
$P_{+}\sma A_{1}$ (or $\bar P_{+}\sma R$ or $P_{+}\sma R$, for $x=a$,
$b$, or $c$, respectively) into $(P_{x_{1}}\times\dotsb \times
P_{x_{q}})_{+}\sma A_{1}$ (or $(P_{x_{1}}\times\dotsb \times
P_{x_{q}})_{+}\sma R$) induced by the identity on $A_{1}$ (or $R$) and
sending $r$ in $P$ to
$(r^{1}_{1},\dotsc,r^{n}_{p_{n}})$ in $P_{x^{1}_{1}}\times\dotsb
\times P_{x^{n}_{p_{n}}}$ with $r^{i}_{j_{i}}=r$ and $r^{i}_{k}=0$ for
$k\neq j_{i}$.  This then describes a map from $\MA[1]\hotimes
S^{n}_{p_{1},\dotsc,p_{n}}$ to $B^{n}_{p_{1},\dotsc,p_{n}}A$ that
respects the face and degeneracy maps in all simplicial direction.  On
geometric realization, we get a map
\begin{equation}\label{eqsusp}
\Sigma^{n}_{R}\MA[1]\to B^{n}A,
\end{equation}
natural in $A$.  The map of $R$-modules under and over $R$ from
$\MA[1]$ to $A_{1}$ (obtained by forgetting length coordinate) induces
a map
\[
\Sigma^{n}_{R}\MA[1]\to \Sigma^{n}_{R}A_{1}
\]
which is a weak equivalence when the unit $R\to A_{1}$ is a Hurewicz
cofibration, so in particular, when the hypothesis of
Theorem~\ref{thmbarmult} holds.

\section{The Bar Construction for Non-Unital Partial $\LC$-algebras}
\label{secbar2}

In the next section, we study the reduced Andr\'e--Quillen cohomology
of a $\LCR$-algebra; as we will see there, this is easiest when we
work with ``non-unital'' $\LCR$-algebras in place of augmented
$\LCR$-algebras.  For this reason, we take this section to redo the
work of the previous section in the non-unital context.  We begin with
the definition of a non-unital partial $\LCR$-algebra.

\begin{defn}\label{defnonunital}
Let $\NC$ be the operad with $\NC(0)$ empty and $\NC(m)=\LC(m)$ for
$m>0$.  A \term{non-unital} partial $\LC$-algebra is a partial $\NC$-algebra.
\end{defn}

If $N$ is a non-unital partial $\LC$-algebra, then we can form an associated
augmented partial $\LC$-algebra by formally adding a unit.  Noting that for an
$R$-module $X$,
\[
(X\vee R)^{m}=\bigvee_{s\subset \otom} X^{(s)},
\]
(smash power indexed on the set $s$)
where $\otom=\{1,\dotsc,m\}$,
the $R$-modules
\[
KN_{m}=\bigvee_{s\subset \otom} N_{|s|}
\]
naturally form a partial power system with $KN_{1}=N_{1}\vee R$.  The
partial $\NC$-algebra structure on $N$ extends to a partial
$\LC$-algebra structure on $KN$, with the
missing elements in the subsets acting like place-holders for the unit
$1\in \LC(0)$ and with the action of $\LC(0)$
manipulating the sets $\otom=\{1,\dotsc,m\}$ and their subsets.
Specifically, on the summand corresponding to $s\subset \otom$,
the element 
\[
(c_{1},\dotsc,c_{j})\in \LC(m_{1})\times \dotsb \times
\LC(m_{j})
\]
(for $m_{1}+\dotsb +m_{j}=m$) induces the map
\begin{equation}\label{eqnuact}
(c'_{1},\dotsc,c'_{j'})\colon N_{|s|}\to N_{|t|}
\end{equation}
into the $t\subset \otoj=\{1,\dotsc,j\}$ summand, as follows.
Noting that $(c_{1},\dotsc,c_{j})$ has $m$ total inputs, we form 
\[
(c'_{1},\dotsc,c'_{j'})\in \LC(m'_{1})\times \dotsb \times \LC(m'_{j'})
\]
(with $m'_{1}+\dotsb+m'_{j'}=|s|$) by plugging $1\in \LC(0)$ into each
input that corresponds to an element not in $s$, and then dropping any $c_{i}$
whose inputs all become plugged.  The subset $t$ then consists of those
elements $i$ in $\otoj$ where $c_{i}$ was not dropped.

In the case of true algebras, we can also go from augmented
$\LC$-algebras to non-unital $\LC$-algebras: For a true $\LC$-algebra
$A$, we can form the true non-unital $\LC$-algebra $N$ as the homotopy
pullback of the augmentation, $N=R^{I}\times_{R}A$.  In the next
section, we will see that for true $\LC$-algebras, up to homotopy,
working with non-unital $\LC$-algebras is equivalent to working with
augmented $\LC$-algebras (Theorem~\ref{thmaugqe}).

We have a corresponding notion of non-unital partial associative $R$-algebra,
and the construction $K$ above also defines a functor from non-unital
partial associative $R$-algebras to augmented partial associative $R$-algebras.
We can generalize the Moore algebra to this context.
For a non-unital partial $\LC[1]$-algebra $N$, the power system
\[
\MN[m]=P^{m}_{+}\sma N_{m}
\]
forms a non-unital partial associative $R$-algebra using the
length concatenation construction in the Moore algebra.  To compare
this with $\MoAlg{KN}$, note that 
\[
K\MN[m]=\bigvee_{s\subset \otom}
P^{s}_{+}\sma N_{|s|},
\]
(the cartesian product of copies of $P$ indexed on $s$), while
\[
\MoAlg{KN}_{m}=\bigvee_{s\subset \otom}
(P^{m}_{s})_{+}\sma N_{|s|},
\]
where $P^{m}_{s}$ is the subset of $\bar P^{m}=[0,\infty)^{m}$ of
points $(r_{1},\dotsc, r_{m})$ with $r_{i}>0$ if $i\in s$.
The inclusion of $P^{s}$ in $P^{m}_{s}$ as the subset where $r_{i}=0$
for $i\not\in s$ induces a map $K\MN$ to $\MoAlg{KN}$ that is a map of
partial associative $R$-algebras and a $\Sigma_{m}$-equivariant
homotopy equivalence of $R$-modules in each partial power.

We can construct a version of the bar construction for a non-unital
$\LC$-algebra, taking 
\[
B_{m} N = \MN[m]
\]
for the non-unital Moore algebra construction above.  Since $N$ and
$\MN$ do not have units, this collection of $R$-modules does not admit
degeneracy maps, but the face maps in Construction~\ref{consbar} still
make sense (using the trivial map for the zeroth and last face map in
place of the augmentation).
Then $B\subdot N$ forms a $\Delta$-object (a simplicial 
object without degeneracies), and we can form $BN$ as the
geometric realization (by gluing $B_{m}\sma \Delta[m]_{+}$ along the
face maps).  We compare $BN$ and $BKN$ in the following
proposition. 

\begin{prop}
The $R$-modules $BN$ and $B(K\MN)$ are canonically isomorphic and the
map of $R$-modules $B(K\MN)\to B(\MoAlg{KN})=BKN$ is a homotopy
equivalence. 
\end{prop}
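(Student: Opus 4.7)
The plan is to handle the two claims separately, using the wedge decomposition of $(K\MN)_m$ to reduce the augmented bar construction on $K\MN$ to the $\Delta$-object $B\subdot N$.

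For the canonical isomorphism $BN \cong B(K\MN)$, I would use the wedge decomposition
\[
B_m(K\MN) = (K\MN)_m = \bigvee_{s\subset \otom} \MN[|s|],
\]
in which the $s=\otom$ summand is exactly $\MN[m] = B_m N$, and show that $B\subdot(K\MN)$ is the simplicial $R$-module obtained from the $\Delta$-object $B\subdot N$ by formally adjoining degeneracies. This amounts to three verifications: (a) since the augmentation $K\MN \to R$ kills $\MN$, the face maps $d_0 = e_1$ and $d_m = e_m$ vanish on the $s = \otom$ summand, matching the trivial first and last face maps of $B\subdot N$; (b) since $\MN$ is an ideal in $K\MN$, the interior face maps $d_i$ preserve the $s = \otom$ summand and restrict there to the multiplication face maps of $B\subdot N$; (c) every degeneracy map $s_i$ is a literal wedge summand inclusion into summands indexed by proper subsets $s \subsetneq \otom$. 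The geometric realization of a simplicial $R$-module of this form coincides canonically with the $\Delta$-realization of its non-degenerate $\Delta$-subobject, giving the canonical isomorphism $B(K\MN) \iso BN$.

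For the homotopy equivalence $B(K\MN) \to BKN$, I would use the map $K\MN \to \MoAlg{KN}$ described just above the proposition, which is a map of augmented partial associative $R$-algebras and a $\Sigma_m$-equivariant homotopy equivalence of $R$-modules on each partial power. Applying the bar construction degreewise gives a simplicial map $B\subdot(K\MN) \to B\subdot(\MoAlg{KN})$ that is a homotopy equivalence in every simplicial degree. The degeneracies in $B\subdot(K\MN)$ are wedge summand inclusions, and the degeneracies in $B\subdot(\MoAlg{KN})$ are summand inclusions composed with the closed inclusions $\{r_i = 0\} \subset P^m_s$; in both cases the inclusions are Hurewicz cofibrations by the same cell-theoretic argument as in the proof of Proposition~\ref{proptrueprop}, so both simplicial objects are proper, and a degreewise homotopy equivalence of proper simplicial $R$-modules induces a homotopy equivalence on geometric realization.

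The main obstacle is the combinatorial bookkeeping in Part~1: tracing exactly which summand of the wedge decomposition each face and degeneracy map of $B\subdot(K\MN)$ hits, so as to verify the ``free degeneracies'' characterization cleanly. Once the ideal property of $\MN$ in $K\MN$ and the wedge-inclusion form of the degeneracies are in place, however, the isomorphism is essentially forced by the definition of $K$ as freely adjoining a unit.
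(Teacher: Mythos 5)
Your Part 1 is exactly the paper's argument: the paper also identifies the simplicial object obtained from the $\Delta$-object $B\subdot N$ by freely adjoining degeneracies, writes it in simplicial degree $m$ as $\bigvee_{j\leq m}\bigvee_{f\colon \tom\to\toj}\MN[j]$, and observes the bijection between weakly increasing epimorphisms $f$ and subsets $s\subset\otom$, matching your wedge decomposition of $K\MN[m]$. Your verifications (a)--(c) are the content of that identification, so this half is fine.

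For Part 2 you take a genuinely different route than the paper, and there are two points worth noting. First, you justify properness by appeal to ``the same cell-theoretic argument as in the proof of Proposition~\ref{proptrueprop}''; but that argument requires a cofibrancy hypothesis on $R\to A$ which is not assumed here. The actual reason both simplicial objects are proper --- which you in fact state correctly in the preceding sentence --- is elementary and topological: the degeneracies in $B\subdot N$ are literal wedge-summand inclusions, and the degeneracies in $B\subdot(\MoAlg{KN})$ are smashes with the NDR inclusion $\{0\}_{+}\hookrightarrow \bar P_{+}$ followed by wedge-summand inclusions, so they are Hurewicz cofibrations without any hypothesis on $N$. The paper records this as a remark immediately after the proposition. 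Second and more substantively, the paper's proof of the homotopy equivalence does not use properness at all: it observes that the degreewise homotopy equivalences $K\MN[p]\to\MoAlg{KN}_{p}$ are given by linear homotopies in the length coordinates, and that these homotopies themselves commute with the bar-construction face and degeneracy maps. This makes the map a \emph{simplicial} homotopy equivalence, and geometric realization of a simplicial homotopy equivalence is a homotopy equivalence with no properness needed. Your argument instead invokes ``a degreewise homotopy equivalence of proper simplicial $R$-modules induces a homotopy equivalence on geometric realization''; the readily citable version of this (e.g.\ EKMM X.2.4) gives only a \emph{weak} equivalence, and upgrading to homotopy equivalence requires an additional gluing-lemma argument along the skeletal filtration. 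The conclusion is defensible, but the paper's route --- checking that the homotopy is simplicial --- gets the stated homotopy equivalence directly and with fewer hypotheses, and you would do well to at least remark why your levelwise homotopy equivalence assembles to a simplicial one (which is in fact what happens here).
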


\begin{proof}
Associated to any $\Delta$-object we obtain a simplicial
object by formally attaching degeneracies; the geometric realization
of the $\Delta$-object is canonically isomorphic to the geometric
realization  of the associated simplicial object.  In this case, the
simplicial object associated to $BN$ is in simplicial degree $m$
the $R$-module 
\[
\bigvee_{j\leq m}\ \bigvee_{f\colon \tom\to\toj} \MN[j],
\]
where the maps $f\colon \tom\to\toj$ in the inner wedge are the
iterated degeneracies in the category $\DDelta$, i.e., the weakly
increasing epimorphisms of totally  
ordered sets $\{0,\dotsc,m\}\to\{0,\dotsc,j\}$.  We have a one-to-one
correspondence between the set of such epimorphisms $f$ and the set of $j$
element subsets $s$ of $\{1,\dotsc,m\}$ (where the elements in $s$ are
the first elements of $\{0,\dotsc,m\}$ that $f$ sends to each of the
elements $1,\dotsc,j$ of $\{0,\dotsc,j\}$), and this defines an
isomorphism between the $R$-module above and $K\MN[m]$.  As $m$
varies, these isomorphisms preserve the face and degeneracy maps in
the simplicial objects. This proves the first statement.  For the
second statement, we note that the homotopy equivalences $K\MN[p]\to
\MoAlg{KN}_{p}$ (using linear homotopies) commute with the face and
degeneracy maps in the bar construction and so extend to homotopy
equivalences on the geometric realizations.  
\end{proof}

We note that the properness issues that plague the discussion of the
cyclic bar construction and the bar construction for augmented
algebras disappear for non-unital algebras: For the simplicial version
of the construction $BN$,
each degeneracy is the inclusion of a wedge summand and the inclusion
of the union of degeneracies likewise is the inclusion of a wedge
summand.  For the construction $BKN$, each
degeneracy is induced by 
smashing with the inclusion of $\{0\}_{+}$ in $\bar
P_{+}=[0,\infty)_{+}$ followed by the inclusion of a wedge summand,
and the inclusion of the union of the degeneracies admits a similar
description on each of its wedge summands.  Thus, in particular, 
we have the following version of Theorem~\ref{thmbarmult}.

\begin{thm}\label{thmbarmultnu}
For a non-unital $\LC$-algebra $N$, $BKN$ is a naturally an augmented
partial $\LC[n-1]$-algebra. 
\end{thm}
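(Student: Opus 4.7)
The plan is to deduce Theorem~\ref{thmbarmultnu} from Theorem~\ref{thmbarmult} applied to the augmented partial $\LC$-algebra $A = KN$ produced at the beginning of this section. Since $KN$ is already known to be an augmented partial $\LC$-algebra, the only thing left to verify is the properness hypothesis of Theorem~\ref{thmbarmult}: that $B\subdot((KN)^{[m]})$ is a proper simplicial $R$-module for every $m \geq 1$.

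The starting point is an explicit wedge decomposition of each simplicial degree. Combining the identity $(KN)_{p} = \bigvee_{s \subset \otop} N_{|s|}$, the identity $(KN)^{[m]}_{p} = (KN)_{pm}$ from Definition~\ref{defppspower}, and the description of $\MoAlg{KN}_{m}$ given earlier in this section as $\bigvee_{s \subset \otom} (P^{m}_{s})_{+} \sma N_{|s|}$, one obtains the analogous wedge expression for each $B_{p}((KN)^{[m]}) = \MoAlg{(KN)^{[m]}}_{p}$, indexed by subsets of $\{1,\dotsc,pm\}$ paired with length coordinates in the appropriate $P^{pm}_{s}$. The qualitative fact worth recording is that no identifications along a unit inclusion are ever forced, precisely because $N$ has no nullary operation.

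With the decomposition in hand, each degeneracy $s_{i}$ in $B\subdot((KN)^{[m]})$ is, on each wedge summand, the smash product of an inclusion of a wedge summand in the $N$-direction with the inclusion $\{0\}_{+} \hookrightarrow \bar P_{+}$ in one length coordinate, exactly as sketched in the paragraph preceding the theorem. Such a map is a closed Hurewicz cofibration in each of the three ambient spectrum categories of the paper. The union of the images of $s_{0},\dotsc,s_{p-1}$ inside $B_{p}((KN)^{[m]})$ is itself a wedge of the same kind of pieces (those whose subset is proper or whose length coordinate has been set to $0$ in some slot), and its inclusion is a closed Hurewicz cofibration by the same argument. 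This yields properness, and Theorem~\ref{thmbarmult} then supplies the augmented partial $\LC[n-1]$-algebra structure on $BKN$. Naturality in $N$ is inherited from naturality of $K$ and of the bar construction on augmented partial algebras.

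The main ``obstacle'' is really just bookkeeping: one has to keep the wedge decomposition compatible with both the diagonal construction $(-)^{[m]}$ and the Moore-algebra construction simultaneously and read off that each face and degeneracy respects it up to the stated cofibration property. In contrast to Proposition~\ref{propitprop}, no relative $\oI'$-cell argument is required here, because in the non-unital setting every simplicial structure map of interest is literally the inclusion of a wedge summand (possibly smashed with a $\{0\} \hookrightarrow \bar P$ factor), which makes the cofibration properties immediate.
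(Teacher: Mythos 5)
Your approach matches the paper's: reduce to the properness hypothesis of Theorem~\ref{thmbarmult} for $A=KN$, observe that in the non-unital setting each degeneracy (and the inclusion of the union of degeneracies) is an inclusion of wedge summands smashed with $\{0\}_{+}\hookrightarrow \bar P_{+}$, hence a Hurewicz cofibration, and then invoke Theorem~\ref{thmbarmult}. One bookkeeping slip worth flagging: the length coordinates in $(B_{p}KN)_{m}=\MoAlg{(KN)^{[m]}}_{p}$ live in $P^{p}_{s_{1}\cup\dotsb\cup s_{m}}\subset \bar P^{p}$, not in $P^{pm}_{s}$ --- the Moore construction at partial-power level $p$ produces at most $p$ length slots regardless of $m$, as recorded by the paper's formula $(B_{p}KN)_{m}=\bigvee_{s_{1},\dotsc,s_{m}}(P^{p}_{s_{1}\cup\dotsb\cup s_{m}})_{+}\sma N_{|s_{1}|+\dotsb+|s_{m}|}$ --- but since your description of the degeneracy as a wedge-summand inclusion smashed with $\{0\}_{+}\hookrightarrow\bar P_{+}$ in a single slot is correct, the properness argument is unaffected.
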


We have a variant of $BN$ where we use the trivial $R$-module in
place of $R$ at the zero level.  We denote this as $\tB N$, 
and we identify the partial power system $B(K\MN)$
as $K\tB N$.
Specifically, $(\tB N)_{m}$ is the geometric realization of the
simplicial object which in degree $p$ is
\[
(\tB_{p}N)_{m}=
\bigvee_{s_{1},\dotsc,s_{m}} 
P^{|s_{1}\cup\dotsb \cup s_{m}|}_{+}\sma 
N_{|s_{1}|+\dotsb +|s_{m}|},
\]
with the wedge over the $m$-tuples of non-empty subsets of
$\otop=\{1,\dotsc,p\}$.  We can identify $(\tB_{p}N)_{m}$ as a
submodule 
of 
\[
(B_{p}KN)_{m}=\bigvee_{s_{1},\dotsc,s_{m}} 
(P^p_{s_{1}\cup \dotsb \cup s_{m}})_{+}\sma 
N_{|s_{1}|+\dotsb +|s_{m}|}
\]
with the wedge over the $m$-tuples of all subsets of
$\otop$.  From the work above we have a partial
$\LC[n-1]$-algebra structure on the partial power system $B_{p}KN$;
this restricts to a non-unital partial
$\LC[n-1]$-algebra structure on the partial power system $\tB_{p}N$ as
follows.

The action becomes easier to describe when we re-index the summands
by writing
\[
(B_{p}KN)_{m}=\bigvee_{s_{1},\dotsc,s_{p}} 
(P(s_{1})\times \dotsb \times P(s_{p}))_{+}\sma 
N_{|s_{1}|+\dotsb +|s_{p}|}
\]
where the sum is over the $p$-tuples of subsets of
$\otom=\{1,\dotsc,m\}$ and 
$P(s)=\bar P$ if $s$ is empty and $P(s)=P$ if $s$ is non-empty; the
relationship between these two indexings of the wedge sum 
corresponds to arranging $\{1,\dotsc ,pm\}$ into the $p$ blocks of $m$
elements $(1,\dotsc,m),(m+1,\dotsc, 2m),\dotsc, ((p-1)m+1,\dotsc,pm)$
in the first description or the $m$ blocks of $p$ elements
$(1,m+1,2m+1,\dotsc,(p-1)m+1),\dotsc, (m,2m,\dotsc ,pm)$ in the new
description.  Letting $P'(s)=\{0\}$ 
if $s$ is empty and $P'(s)=P(s)=P$ if $s$ is non-empty, then in this formulation,
\[
(\tB_{p}N)_{m}=
\bigvee_{s_{1},\dotsc,s_{p}} 
(P'(s_{1})\times \dotsb \times P'(s_{p}))_{+}\sma 
N_{|s_{1}|+\dotsb +|s_{p}|},
\]
where the sum is over the $p$-tuples of subsets of $\{1,\dotsc,m\}$
such that $s_{1}\cup\dotsb \cup s_{p}=\{1,\dotsc,m\}$.
In the $\LC[n-1]$-action, for $m_{1}+\dotsb +m_{j}=m$, $m_{i}>0$, an
element 
\[
(c_{1},\dotsc,c_{j})\in \LC[n-1](m_{1})\times \dotsb \times \LC[n-1](m_{j})
\]
acts on the $s_{1},\dotsc,s_{p}$ summand by the identity map on the $P$
factors and by the map
\[
(c^{1}_{1},\dotsc,c^{1}_{k_{1}},\dotsc,c^{p}_{1},\dotsc,c^{p}_{k_{p}})
\in \LC[n-1](m^{1}_{1})\times \dotsb \times \LC[n-1](m^{p}_{k_{p}})
\]
from $N_{|s_{1}|+\dotsb +|s_{p}|}$ to $N_{|t_{1}|+\dotsb +|t_{p}|}$
(using the last coordinates embedding of $\LC[n-1]$ in $\LC[n]$),
where $t_{i}$ and $(c^{i}_{1},\dotsc,c^{i}_{k_{i}})$ are as
in~\eqref{eqnuact} above:  $(c^{i}_{1},\dotsc,c^{i}_{k_{i}})$ is
formed by plugging $1\in \LC[n-1](0)$ into the input corresponding to
elements of $\{1,\dotsc,m\}$ not in $s_{i}$, dropping any $c_{r}$
where all inputs are plugged; $t_{i}$ consists of the indexes $r$
where $c_{r}$ is not dropped when forming
$(c^{i}_{1},\dotsc,c^{i}_{k_{i}})$.  To see that this action restricts
to $\tB_{p}N$, we just need to observe that when $s_{1}\cup\dotsb \cup
s_{p}=\{1,\dotsc ,m\}$, then $t_{1}\cup\dotsb \cup t_{p}=\{1,\dotsc ,j\}$.

Just as for $B_{p}KN$, the $\LC[n-1]$-action on $\tB_{p}N$ is compatible with the
face and degeneracy maps in the simplicial object $\tB N$.  This makes
$\tB N$ into a non-unital partial $\LC[n-1]$-algebra with the map $K\tB
N\to BKN$ a map of partial $\LC[n-1]$-algebras.

\begin{thm}\label{thmbarnu}
For a non-unital partial $\LC$-algebra $N$, the bar construction $\tB
N$ is naturally a non-unital $\LC[n-1]$-algebra and the weak equivalence 
$K\tB N\to B(KN)$ is a natural map of partial
$\LC[n-1]$-algebras.
\end{thm}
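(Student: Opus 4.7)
The plan is to leverage the explicit description of the $\LC[n-1]$-action on $B(KN)$ worked out in the paragraphs immediately preceding the theorem, together with Theorem~\ref{thmbarmultnu}. The essential content of the theorem is (a) the action of $\LC[n-1]$ on each $(B_{p}KN)_{m}$ restricts to the submodule $(\tB_{p}N)_{m}$, (b) this restricted action is compatible with the face and degeneracy maps of the simplicial object $\tB N$, and (c) the comparison map $K\tB N\to B(KN)$ respects all of this structure.

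First I would verify the restriction claim. A wedge summand of $(\tB_{p}N)_{m}$ is indexed by a $p$-tuple $(s_{1},\dots,s_{p})$ of subsets of $\{1,\dots,m\}$ whose union is all of $\{1,\dots,m\}$. For $(c_{1},\dots,c_{j})\in \LC[n-1](m_{1})\times \dots \times \LC[n-1](m_{j})$ with $m_{1}+\dots+m_{j}=m$ and each $m_{i}>0$, the induced map sends this summand into the summand indexed by $(t_{1},\dots,t_{p})$ formed as in~\eqref{eqnuact}. The key combinatorial observation is that if $r\in\{1,\dots,j\}$, then the block of inputs of $c_{r}$ is some subset $I_{r}\subseteq\{1,\dots,m\}$, and since $\{s_{i}\}$ covers $\{1,\dots,m\}$ there exists $i$ with $I_{r}\cap s_{i}\neq\emptyset$; for that $i$, not every input of $c_{r}$ is plugged with the unit, so $c_{r}$ survives and $r\in t_{i}$. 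Hence $t_{1}\cup\dots\cup t_{p}=\{1,\dots,j\}$, which is exactly the condition for the image to lie in $\tB_{p}N$.

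Next I would check compatibility with the simplicial structure on $\tB_{\ssdot}N$. Interior face maps, induced by $\mu\in\oA(2)$ via the first-coordinate embedding, commute with the $\LC[n-1]$-action by the interchange computation of Proposition~\ref{propint}, and the outer face maps $d_{0}$, $d_{p}$ (trivial maps in the non-unital setting) trivially respect the action. Degeneracies amount to inserting an empty subset at a chosen slot; since the formula for the action depends on the tuple $(s_{1},\dots,s_{p})$ only through its members, it commutes with empty-slot insertion. This combination of facts upgrades $\tB_{\ssdot}N$ to a simplicial non-unital partial $\LC[n-1]$-algebra, and applying geometric realization (which commutes with $\bNC[n-1]$ by Section~\ref{secpart}) produces the $\LC[n-1]$-structure on $\tB N$.

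Finally, the map $K\tB N\to B(KN)$ is built from the simplicial level maps $K\tB_{p}N\hookrightarrow B_{p}(KN)$, and the action on $\tB_{p}N$ was \emph{defined} as the restriction of the action on $B_{p}(KN)$; since the functor $K$ carries partial $\NC[n-1]$-algebra structures to partial $\LC[n-1]$-algebra structures by the construction of Section~\ref{secbar2}, the level map is a map of partial $\LC[n-1]$-algebras, and the conclusion follows on realization. The only step with any real content is the combinatorial restriction property for $(s_{i})\mapsto(t_{i})$ in the first paragraph above; everything else is bookkeeping that has already been set up.
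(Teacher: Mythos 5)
Your proposal is correct and follows essentially the same route as the paper: the paper's ``proof'' is the discussion immediately preceding the theorem statement, which gives the explicit action formula on $(B_{p}KN)_{m}$, observes that $s_{1}\cup\dotsb\cup s_{p}=\{1,\dotsc,m\}$ implies $t_{1}\cup\dotsb\cup t_{p}=\{1,\dotsc,j\}$ (your paragraph two spells this out in a bit more detail than the paper's one-line assertion), notes simplicial compatibility, and identifies $K\tB N\to B(KN)$ as a map of structures. The one substantive point you correctly isolated --- that the covering condition is preserved because $m_{r}>0$ forces some input of $c_{r}$ to lie in some $s_{i}$ --- is exactly the content the paper is gesturing at.
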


Next we describe the iterated bar construction $\tB^{n}N$.  Using the
first description of the partial power system $\tB_{p}N$ above, we see that
$\tB^{2}N$ is the $\Delta$-object in simplicial $R$-modules which in
degree $p,q$ is 
\[
\MoAlg{\tB_{p}N}_{q}=
P^{q}_{+}\sma \bigl(\bigvee_{s_{1},\dotsc,s_{q}} 
P^{|s_{1}\cup\dotsb \cup s_{q}|}_{+}\sma 
N_{|s_{1}|+\dotsb +|s_{q}|}\bigr),
\]
with the wedge over the $q$-tuples of non-empty subsets of
$\otop=\{1,\dotsc,p\}$. In bidegree $p,q$, the associated bisimplicial
$R$-module is then
\[
\tB^{2}_{p,q}N=
\bigvee_{\putatop{t\subset \{1,\dotsc,q\}}{t\neq\{\}}}\ 
\bigvee_{s_{1},\dotsc,s_{|t|}} 
P^{|s_{1}\cup\dotsb \cup s_{|t|}|+|t|}_{+}\sma 
N_{|s_{1}|+\dotsb +|s_{|t|}|},
\]
with the inside wedge over the $|t|$-tuples of non-empty subsets of
$\otop$.  If we re-index the subsets $s_{i}$ by the elements
of $t$ and set $s_{i}=\{\}$ for $i\not\in t$, then we get 
\[
\tB^{2}_{p,q}N=
\bigvee_{\putatop{s_{1},\dotsc,s_{q}\subset \otop}%
{s_{1}\cup\dotsb \cup s_{q}\neq \{\}}}
P^{|s_{1}\cup\dotsb \cup s_{q}|+n(s_{1},\dotsc,s_{q})}_{+}\sma 
N_{|s_{1}|+\dotsb +|s_{q}|},
\]
where $n(s_{1},\dotsc,s_{q})$ denotes the number of
$s_{1},\dotsc,s_{q}$ that are non-empty.
Finally, the collections $s_{1},\dotsc,s_{q}\subset \{1,\dotsc p\}$
satisfying $s_{1}\cup\dotsb \cup s_{q}\neq \{\}$ are in one to one
correspondence with the non-empty subsets of 
\[
\otop\times \otos[q]=\{1,\dotsc,p\}\times \{1,\dotsc,q\}.
\]
For $s\subset \otop\times \otos[q]$, write $\n^{p,q}_{1}(s)$ for the
subset of $\otop$ of elements $i$ such that $s\cap (\{i\}\times
\otos[q])$ is non-empty and likewise write  $\n^{p,q}_{2}(s)$ for the
subset of $\otos[q]$ of elements $i$ such that $s\cap 
(\otop\times \{i\})$ is non-empty.  In other words, $\n^{p,q}_{1}$ and
$\n^{p,q}_{2}$ are the images of the projection maps from
$\otop\times \otos[q]$ to $\otop$ and $\otos[q]$, respectively.  Now 
we can identify $\tB^{2}_{p,q}N$ as 
\[
\tB^{2}_{p,q}N=
\bigvee_{\putatop{s\subset \otop\times \otos[q]}{s\neq\{\}}}
(P^{\n^{p,q}_{2}(s)}\times P^{\n^{p,q}_{1}(s)})_{+}\sma N_{|s|}.
\]
Working inductively, we see that in multisimplicial degree
$p_{1},\dotsc,p_{n}$, 
\begin{equation}\label{eqconsitbar}
\tB^{n}_{p_{1},\dotsc,p_{n}}N=
\bigvee_{\putatop{s\subset \otop_{1}\times\dotsb 
\times  \otop_{n}}{s\neq\{\}}}
(P^{\n^{p_{1},\dotsc,p_{n}}_{n}(s)}\times \dotsb \times 
P^{\n^{p_{1},\dotsc,p_{n}}_{1}(s)})_{+}\sma N_{|s|},
\end{equation}
where $\n^{p_{1},\dotsc,p_{n}}_{i}(s)$ is the subset of elements $j$ in
$\otop_{i}$ such that 
\[
s\cap (\otop_{1}\times \dotsb \times \otop_{i-1}
\times \{j\}\times \otop_{i+1}\times \dotsb \times \otop_{n})
\]
is non-empty.

For $B^{n}KN$, we obtain a completely analogous description, but with 
$s=\{\}$ in the wedge sum and with different length coordinates.
Recall that for $u\subset \otop$, 
$P^{p}_{u}$ denotes the subset of $\bar P^{p}=[0,\infty)^{p}$ of elements
$(r_{1},\dotsc,r_{p})$ where $r_{j}>0$ for all $j\in u$.  Then the
length coordinates on the summand indexed by $s$ is
$P^{p_{i}}_{\n^{p_{1},\dotsc,p_{n}}_{i}(s)}$.
In other words,
\[
B^{n}_{p_{1},\dotsc,p_{n}}KN=
\bigvee_{s\subset \otop_{1}\times\dotsb 
\times  \otop_{n}}
(P^{p_{n}}_{\n^{p_{1},\dotsc,p_{n}}_{n}(s)}\times \dotsb \times 
P^{p_{1}}_{\n^{p_{1},\dotsc,p_{n}}_{1}(s)})_{+}\sma N_{|s|}.
\]

To complete the closed description of the iterated bar construction, we
describe the face and degeneracy maps.  The degeneracy map $s_{j}$ in
the $i$-th simplicial direction is induced by the map $\otop_{i}$
to $\{1,\dotsc,p_{i}+1\}$ sending $1,\dotsc,j-1$ by the identity and
$j,\dotsc,p_{i}$ by $m\mapsto m+1$.  

For $0<j<p_{i}$ the face map
$d_{j}$ adds the appropriate pair
$r,s\mapsto r+s$ in the $P$ factors (corresponding to $j,j+1\in
\otop_{i}$), and performs the action
\[
c=\id^{i-1}\times \gamma_{r,s}\times \id^{n-i}\in \LC(2)
\]
on the corresponding spots in 
$N_{|s|}$: We first use a permutation to rearrange from
lexicographical order on $s$ to the lexicographical order where the
$i$-th index is least significant.  Then for fixed $a_{k}\in
\otop_{k}, k\neq i$, in this order, the elements
\[
(a_{1},\dotsc,a_{i-1},j,a_{i+1},\dotsc,a_{n})
\qquad \text{and}\qquad
(a_{1},\dotsc,a_{i-1},j+1,a_{i+1},\dotsc,a_{n})
\]
are adjacent when they both appear in $s$.  For such elements, we
apply $c$ on the appropriate spot on $N_{|s|}$, but when one is
missing, we plug $1\in \LC(0)$ in that input of $c$ and apply that
element of $\LC(1)$ to the spot in $N_{|s|}$. (When neither element is
in $s$, no action needs to be taken for that pair.) We then use the
permutation to rearrange back to the natural lexicographical order.

The zeroth face map $d_{0}$ is the trivial map on the summands indexed
by those subsets $s$ where $1\in \n^{p_{1},\dotsc,p_{n}}_{i}(s)$.
Fixing $s$ with $1\not\in \n^{p_{1},\dotsc,p_{n}}_{i}(s)$, the
action of $d_{0}$ sends this summand to 
summand indexed by $s'$, where $s'$ is obtained
by subtracting $1$ from the $i$-th coordinate of each element of $s$.
On the length factors, the first factor gets dropped and for $j\ge 1$
the $j+1$ coordinate becomes the new $j$ 
coordinate.  On the $N$ factor, $|s|=|s'|$ and $N_{|s|}$ maps by the
identity to $N_{|s'|}$.  
The last face map $d_{p_{i}}$ has a similar description: it is the
trivial map on the summands 
indexed by those subsets $s$ where $p_{i}\in
\n^{p_{1},\dotsc,p_{n}}_{i}(s)$ and on those summands
where $p_{i}\not\in \n^{p_{1},\dotsc,p_{n}}_{i}(s)$, it lands in $s\subset \otop_{1}\times \dotsb
\times \otop'_{i}\times \dotsb \times \otop_{n}$ (for
$p_{i}'=p_{i}-1$), dropping the last length coordinate and acting by the identity on $N_{|s|}$.

We note from the description above that we have a canonical inclusion
of $\Sigma^{n}N_{1}$ into $\tB^{n}N$: Using the singleton subsets of
$\otop_{1}\times \dotsb \times \otop_{n}$ and the element $1\in
P$, we get a map of multisimplicial $R$-modules  $N_{1}\sma
S^{1}\subdot \sma \dotsb 
\sma S^{1}\subdot\to \tB^{n}_{\ssdot,\dotsc,\ssdot}N$, which
on geometric realization induces a map
\[
\Sigma^{n}N_{1}\to \tB^{n}N.
\]
We have an 
inclusion of $KN_{1}=R\vee N_{1}$ in $M(KN)_{1}$ where we send the $R$
factor in as length zero and the $N_{1}$ factor in as length one.
This defines a map in the category of $R$-modules under and over $R$,
splitting the usual map $M(KN)_{1}\to KN_{1}$ (induced by forgetting
lengths).  Noting that $\Sigma^{n}_{R}KN_{1}=R\vee \Sigma^{n}N_{1}$,
we have the following commutative diagram, relating the map above to
the map~\eqref{eqsusp}.
\begin{equation}\label{eqsuspsplit}
\begin{gathered}
\xymatrix{%
&R\vee \Sigma^{n}N_{1}\ar[r]\ar[d]\ar[ld]_{=}&R\vee \tB^{n}N\ar[d]\\
\Sigma^{n}_{R}KN_{1}&\Sigma^{n}_{R}MKN_{1}\ar[l]^{\simeq}\ar[r]&B^{n}KN
}
\end{gathered}
\end{equation}

\section%
{Reduced Topological Quillen Homology and the Iterated Bar Construction} 
\label{sectqh}

In this section we relate the iterated bar construction for augmented
$\LC$-algebras of the previous section to reduced topological Quillen
homology, proving
Theorem~\ref{maintq}. Quillen homology
theories are defined in terms of derived indecomposables, and (as
shown in~\cite{mbthesis}) work best in the context of non-unital
algebras. We begin 
by reviewing the Quillen equivalence of augmented and non-unital
algebras.

Recall from the previous section the functor $K$ from (true)
non-unital $\LC$-algebras to (true) augmented $\LC$-algebras that
wedges on a unit, $KN=R\vee N$.  This functor is left adjoint to the
functor $I$ from augmented $\LC$-algebras to non-unital $\LC$-algebras
that takes the (point-set) fiber of the augmentation map,
$IA=*\times_{R}A$.  We have a Quillen closed model structure on each
of these categories where the fibrations and weak equivalences are the
underlying fibrations and weak equivalences of $R$-modules; the
cofibrations are the retracts of $\btNC \oI$-cell complexes (in the
non-unital case) and of $\btLC \oI$-cell complexes (in the augmented
case) where $\oI$ is the set of generating cofibrations.  The functor
$I$ preserves fibrations and acyclic fibrations, and so the adjoint
pair $K,I$ forms a Quillen adjunction. Since we can calculate the
effect on homotopy groups of $K$ on arbitrary non-unital
$\LC$-algebras and of $I$ on fibrant augmented $\LC$-algebras, we see
that when $A$ is fibrant, a map of augmented $\LC$-algebras $KN\to A$
is a weak equivalence if and only if the adjoint map $N\to IA$ is a
weak equivalence; it follows that $K,I$ is a Quillen equivalence.

\begin{thm}\label{thmaugqe}
The functors $K$ and $I$ form a Quillen equivalence between the
category of non-unital $\LC$-algebras and the category of augmented
$\LC$-algebras. 
\end{thm}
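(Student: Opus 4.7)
The plan is to check the three standard ingredients: that $(K,I)$ is an adjoint pair, that the adjunction is Quillen, and that for cofibrant $N$ and fibrant $A$ the map $KN\to A$ is a weak equivalence iff its adjoint $N\to IA$ is. Most of this is sketched in the paragraph preceding the theorem statement, and the job is to fill in the details.

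For the adjunction, I would argue directly: a map of augmented $\LC$-algebras $KN\to A$ sends the wedge summand $R$ to the unit of $A$ and, since its composition with the augmentation $A\to R$ is the projection onto $R$, restricts on the $N$ summand to a map landing in $IA=\ast\times_{R}A$. This restriction is a map of non-unital $\LC$-algebras because the $\LC$-structure on $KN$ is built so that operations using only elements of $N$ land in $N\subset KN$. Conversely, any map $N\to IA$ extends uniquely to $KN\to A$ by $R$-linearity and unitality, and one checks that the extension respects the $\btLC$-action by unwinding the definition of the action on $KN$ (in which $\LC(0)$ acts via the unit of $A$). For the Quillen adjunction, fibrations and weak equivalences in both categories are detected on underlying $R$-modules, and $IA$ fits into a pullback square $IA=\ast\times_{R}A$; given a map $f\colon A\to A'$, the induced map $If$ is the pullback of $f$ along $IA'\to A'$, so $I$ preserves fibrations and acyclic fibrations.

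For the equivalence, I would compute both sides on homotopy groups. On the left, the underlying $R$-module of $KN$ is literally $R\vee N$, so $\pi_{\ast}KN\iso \pi_{\ast}R\oplus \pi_{\ast}N$. On the right, when $A$ is a fibrant augmented $\LC$-algebra, the augmentation $A\to R$ is a fibration of underlying $R$-modules that is split by the unit; hence $\pi_{\ast}A\iso \pi_{\ast}R\oplus \pi_{\ast}IA$ with $\pi_{\ast}IA$ identified as the kernel of the augmentation on homotopy. A map $KN\to A$ respects these splittings (the $R$-summand matches on both sides via the unit), so it is a $\pi_{\ast}$-isomorphism precisely when the induced map $\pi_{\ast}N\to \pi_{\ast}IA$ is an isomorphism, i.e., precisely when $N\to IA$ is a weak equivalence.

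The main obstacle is entirely bookkeeping: verifying that the monad $\btNC$ freely adjoining a unit really produces the wedge $R\vee N$ at the level of underlying $R$-modules, and checking that the strict fiber $IA=\ast\times_{R}A$ represents the correct homotopy type when $A$ is fibrant (so that the short exact sequence on $\pi_{\ast}$ splits cleanly). Both are standard once one knows that fibrations are detected on underlying $R$-modules and that $K$, as described in Section~\ref{secbar2}, has underlying $R$-module $R\vee N$ by construction; no further technical input beyond what is already set up in the excerpt is required.
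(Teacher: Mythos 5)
Your proposal is correct and follows essentially the same route as the paper: establish the adjunction, observe that $I$ is a right Quillen functor because fibrations and acyclic fibrations are detected on underlying $R$-modules and $I$ is a pullback, and then verify the Quillen-equivalence criterion by computing $\pi_*(KN)\cong\pi_*R\oplus\pi_*N$ for arbitrary $N$ and $\pi_*A\cong\pi_*R\oplus\pi_*IA$ for fibrant $A$. The paper compresses the last step to ``we can calculate the effect on homotopy groups''; your write-up simply makes that calculation explicit, including the observation that a map $KN\to A$ respects the two splittings so that it is a weak equivalence iff its adjoint is.
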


Given an $R$-module $M$, we can make $M$ into a non-unital
$\LC$-algebra by giving it the trivial $\LC$-action, letting
\[
\LC(m)_{+}\sma_{\Sigma_{m}}M^{(m)}\to M
\]
be the trivial map for $m>1$ and the composite $\LC(1)_{+}\sma M\to
*_{+}\sma M\iso M$ for $m=1$.  This defines a functor $Z$ (the ``zero
multiplication'' functor) from $R$-modules to non-unital
$R$-algebras.  The functor $Z$ has a left adjoint ``indecomposables''
functor $Q$, which can be constructed as the coequalizer
\[
\xymatrix@C-1pc{%
\displaystyle 
\bigvee_{m>0}\LC(m)_{+}\sma_{\Sigma_m}N^{(m)}\mathstrut
\ar[r]<-.5ex>\ar[r]<.5ex>
&N\ar[r]&QN,
}
\]
where one map is the action map for $N$ and the other map is the
zero multiplication action map.  Since the functor $Z$ preserves fibrations and
weak equivalences, we see that $Q,Z$ forms a Quillen adjunction.

\begin{thm}\label{thmzq}
The functors $Q$ and $Z$ form a Quillen adjunction between the
category of non-unital $\LC$-algebras and the category of $R$-modules.
\end{thm}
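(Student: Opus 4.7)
The plan is to prove Theorem~\ref{thmzq} in the standard two-step manner: first exhibit the adjunction $Q \dashv Z$ at the level of underlying categories, then check that $Z$ is right Quillen with respect to the given model structures. Since the model structure on non-unital $\LC$-algebras has fibrations and weak equivalences created by the forgetful functor to $R$-modules, almost all the content lies in the adjunction.

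First I would verify the adjunction by a direct universal-property argument from the coequalizer defining $QN$. A map of $R$-modules $f \colon N \to M$ defines a map of non-unital $\LC$-algebras $N \to ZM$ exactly when $f$ intertwines the two parallel arrows
\[
\xymatrix@C-1pc{
\displaystyle \bigvee_{m>0}\LC(m)_{+}\sma_{\Sigma_m}N^{(m)}
\ar[r]<-.5ex>\ar[r]<.5ex>
&N
}
\]
appearing in the definition of $Q$, because the action of each $\LC(m)$ on $ZM$ factors through the projection $\LC(m)_{+} \to *_{+}$, which is by construction the second parallel arrow (the zero multiplication action). Hence such an $f$ factors uniquely through $QN$, giving the bijection
\[
\aM_{R}(QN, M) \iso \{\text{non-unital $\LCR$-algebra maps } N \to ZM\}
\]
and this bijection is visibly natural in $N$ and $M$.

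Second, I would observe that $Z$ is the identity on underlying $R$-modules (it only adds the trivial $\LC$-action), so $Z$ preserves all fibrations and all weak equivalences by the description of the model structure on non-unital $\LCR$-algebras recalled at the beginning of the section. In particular $Z$ preserves fibrations and acyclic fibrations, so by the standard recognition criterion for Quillen adjunctions, $(Q,Z)$ is a Quillen adjunction.

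There is essentially no obstacle here beyond bookkeeping; the only place where care is needed is checking that the coequalizer really does compute the left adjoint, and in particular that including the $m=1$ summand in the coequalizer gives the correct identification $\mu_{1}^{N}(c;x) \sim x$ for $c \in \LC(1)$, which is exactly what is forced by compatibility with the action of $\LC(1)$ on $ZM$ factoring through the projection $\LC(1)_{+}\sma M \to *_{+}\sma M \iso M$.
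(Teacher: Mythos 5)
Your proposal is correct and takes essentially the same approach as the paper, which simply asserts that $Z$ has left adjoint $Q$ constructed by the displayed coequalizer and then notes that $Z$ preserves fibrations and weak equivalences because the model structure on non-unital $\LCR$-algebras creates these in $R$-modules. You supply the universal-property verification that the paper leaves implicit (including the $m=1$ subtlety, which you handle correctly), but the logical skeleton is the same.
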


For an augmented $\LC$-algebra $A$ and an $R$-module $M$, one can
define the reduced topological Quillen cohomology groups in terms of
derivations of $A$ with coefficients in $M$, i.e., as maps in the
homotopy category of augmented $\LC$-algebras from $A$ to $KZM$ (or
$KZ\Sigma^{*}M$).  Applying 
the Quillen equivalence of Theorem~\ref{thmaugqe} and the Quillen
adjunction of Theorem~\ref{thmzq}, we can identify this as maps in the
derived category of $R$-modules from $QN$ to $M$ (or $\Sigma^{*}M$),
where $N$ is a cofibrant non-unital $\LC$-algebra with $KN$ equivalent
to $A$.  In other words, the left derived functor $\TQN$ of $Q$
produces an object representing topological Quillen homology.
We write $\TQ$ for the composite of $\TQN$ with the 
right derived functor of $I$. 

\begin{defn}\label{deftqh}
For an augmented $\LC$-algebra $A$,
the $\LC$-algebra cotangent complex at the augmentation is the
$R$-module of derived indecomposables $\TQ$.
\end{defn}

We have a canonical natural map $\tB^{n}N\to \Sigma^{n}QN$ defined as
follows.  Thinking of $\Sigma^{n}QN$ as the geometric realization of
the multisimplicial $R$-module $QN \sma (S^{1}\subdot)^{n}$, we send
the summand indexed by $s\subset 
\otop_{1}\times \dotsb \times \otop_{n}$ in~\eqref{eqconsitbar} by
the counit of the $Q,Z$ adjunction $N\to QN$ (and dropping the $P$
factors) when $|s|=1$ and by the trivial map when $|s|>1$. This
clearly commutes with the degeneracy maps and commutes with the face
maps since every face map that changes the cardinality of the indexing
set is either the trivial map or lands in the decomposables. 

Using the constructions above, Theorem~\ref{maintq} becomes the
following theorem stated in terms of non-unital $\LC$-algebras.

\begin{thm}\label{thmtq}
The natural map $\tB^{n}N\to \Sigma^{n}QN$ is a weak equivalence when 
$N$ is a cofibrant non-unital $\LC$-algebra.
\end{thm}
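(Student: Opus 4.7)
The plan is to reduce to the case of free non-unital $\LC$-algebras $N=\btNC M$ on a cofibrant $R$-module $M$ and verify that case by induction on $n$. Every cofibrant non-unital $\LC$-algebra is a retract of a cellular $\btNC\oI$-complex, so by commutation of both functors with filtered colimits it suffices to control a single cell attachment pushout $N'=N\cup_{\btNC A}\btNC B$ along a generating cofibration $A\to B$. For $\Sigma^n Q$, the left Quillen functor $Q$ converts this pushout into a homotopy pushout of $R$-modules, after which $\Sigma^n$ preserves it. For $\tB^n$, we work at the level of the multisimplicial partial power systems in~\eqref{eqconsitbar}: the pushout at partial power $k$ decomposes over configuration data into wedges of mixed smash powers of $A$, $B$, and $N$, and a cell-complex analysis in the spirit of Proposition~\ref{propitprop} shows that the resulting multisimplicial $R$-modules are proper with sufficiently cofibrant attaching maps, so their geometric realization computes the homotopy pushout. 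Combined with the trivial comparison at the initial non-unital $\LC$-algebra, induction on the cell filtration reduces us to the free case.

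For $N=\btNC M$ with $M$ cofibrant we have $QN\iso M$, so the target is $\Sigma^n M$. The plan for the free case is to produce a natural weak equivalence of non-unital $\LC[n-1]$-algebras $\tB\,\btNC M\homeq \btNC[n-1]\Sigma M$, after which iterating and using the trivial base case $\btNC[0]\Sigma^n M=\Sigma^n M$ yields $\tB^n\btNC M\homeq \Sigma^n M$, and one checks that the resulting equivalence agrees with the natural map defined before the theorem statement. The base case $n=1$ of this free-to-free equivalence is the classical fact that the reduced bar construction of the free non-unital associative algebra $T^{+}V$ is equivalent to $\Sigma V$, which follows from the standard extra-degeneracy argument on the bar resolution of $R$ as an $(R\oplus T^{+}V)$-module.

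The main obstacle is the construction and verification of this free-to-free equivalence for $n>1$: one must understand how the $\LC[1]$-multiplication used by $\tB$ (via the first-coordinate embedding of $\LC[1]$ in $\LC$) interacts with the remaining $\LC[n-1]$-structure (via the last-coordinate embedding) through the interchange map of Definition~\ref{definterchange}. Concretely, in the description of $\tB\,\btNC M$ via~\eqref{eqconsitbar} one identifies the weight-one summands as realizing to $\Sigma M$ carrying the expected $\LC[n-1]$-action, and matches the higher-weight summands with the free $\LC[n-1]$-algebra structure on smash powers of $\Sigma M$. An alternative route groups the summands of~\eqref{eqconsitbar} for $\btNC M$ by total weight $w$ (the total number of $M$-factors arising from the decomposition $N_{|s|}\iso \bigvee_{j_1,\dotsc,j_{|s|}}(\LC(j_1)\times\dotsb\times\LC(j_{|s|}))_{+}\sma_{\Sigma_{j_1}\times\dotsb\times\Sigma_{j_{|s|}}}M^{(j_1+\dotsb+j_{|s|})}$), identifies the $w=1$ contribution directly as $\Sigma^n M$ mapping by the identity, and shows by an extra-degeneracy or contracting-homotopy argument that each $w\geq 2$ piece is an acyclic multisimplicial $R$-module; this bypasses the free-to-free equivalence but demands essentially the same combinatorial core.
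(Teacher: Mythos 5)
Your reduction to the free case is sound in outline, but it differs from the paper's: the paper substitutes the monadic bar resolution $B(\btNC,\btNC,N)$ for your cell-induction, using that both $\tB^{n}$ and $Q$ commute with geometric realization. The bar-resolution route is cleaner because it bypasses the need to verify that $\tB^{n}$ carries cell-attachment pushouts to homotopy pushouts; your sketch asserts this via ``a cell-complex analysis in the spirit of Proposition~\ref{propitprop}'' but does not actually supply it, and it is not formal since $\tB^{n}$ is not a left adjoint. Still, I believe this part could be made to work with effort.

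The genuine gap is in the free case, which is where the paper's real content lies and which your proposal does not resolve. Your primary route posits a natural weak equivalence of non-unital $\LC[n-1]$-algebras $\tB\,\btNC M\simeq\btNC[n-1]\Sigma M$, but you only name this as ``the main obstacle'' and indicate that one must ``understand how the $\LC[1]$-multiplication interacts with the $\LC[n-1]$-structure''; no argument for this equivalence is given, and it is a nontrivial theorem (essentially $E_{n}$-Koszul duality for free algebras, the stable analogue of what Fresse proves with substantial work). Your alternative route, grouping the summands of~\eqref{eqconsitbar} by weight $w$ and hoping to show each $w\geq 2$ piece is contractible by ``an extra-degeneracy or contracting-homotopy argument,'' would not go through as stated for $n>1$: the classical extra degeneracy that kills the higher-weight pieces of the associative bar construction (shift the outermost tensor factor out) has no evident multisimplicial analogue respecting the $\LC[n-1]$-face maps once $n\geq 2$. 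The paper handles this by an entirely different mechanism: it decomposes $\tB^{n}\btNC X\iso\bigvee_{m>0}\SSB(m)\sma_{\Sigma_{m}}X^{(m)}$ as a symmetric-sequence functor, observes that the $R$-homology of $\SSB(m)$ is detected by evaluating on $X=R^{0}_{c}\sma\otom_{+}$ for finite sets, transfers the question to the space-level iterated bar construction via $\btLC(R\sma Y_{+})\iso R\sma(\btLC Y)_{+}$, and then invokes the group completion theorem and May's delooping theory (Proposition~\ref{propdeloop}, built from the zigzag~\eqref{eqmayclass}) to identify $B^{n}\btLC Y\simeq\Sigma^{n}Y_{+}$. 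This finite-set reduction and comparison to classical iterated loop space theory is the key idea that your proposal lacks, and without it the free case remains unproved.
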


To prove this theorem, we use the monadic bar
construction trick from~\cite[\S5]{mbthesis}.  The key observation is
that both the functors $\tB^{n}$ and $Q$ commute with geometric
realization.  This is clear from the construction for $\tB^{n}$, but
follows for $Q$ because $Q$ is a topological left adjoint and because
geometric realization of simplicial operadic algebras can be formed as
a topological colimit in the category of algebras; see
\cite[VII\S3]{ekmm}.  In particular, applied to the monadic bar construction
$B(\btNC,\btNC,N)$, we get isomorphisms
\[
\tB^{n}B(\btNC,\btNC,N)\iso B(\tB^{n}\btNC,\btNC,N)
\quad \text{and}\quad 
QB(\btNC,\btNC,N)\iso B(Q\btNC,\btNC,N).
\]
The argument now simplifies from \cite{mbthesis} since in our context
a cofibrant non-unital $\LC$-algebra is cofibrant as an $R$-module.
Regarding $B(\btNC,\btNC,N)$ as a topological colimit in non-unital
$\LC$-algebras, it follows that $B(\btNC,\btNC,N)$ is cofibrant as a
non-unital $\LC$-algebra when $N$ is.  Since the simplicial objects 
\[
B\subdot(\tB^{n}\btNC,\btNC,N)
\qquad \text{and}\qquad 
B\subdot(Q\btNC,\btNC,N).
\]
are always proper (the inclusion of the union of the degeneracies in
each simplicial degree is induced by the inclusion of $\id$ in
$\LC(1)$ and the inclusion of a wedge summand),
Theorem~\ref{thmtq} now reduces to the following lemma.

\begin{lem}\label{lemqt}
For any cofibrant $R$-module $X$, the natural  map $\tB^{n}\btNC X\to
\Sigma^{n}Q\btNC X$ is a weak equivalence.
\end{lem}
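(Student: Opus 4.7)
The plan is to exploit the \emph{weight filtration} on $\btNC X=\bigvee_{k\ge 1}\LC(k)_{+}\sma_{\Sigma_{k}}X^{(k)}$, where weight records the total number of $X$-factors. Both the indecomposables functor $Q$ and the iterated bar construction $\tB^{n}$ respect this grading: for $Q$ this is immediate from its defining coequalizer, since the $\LC$-action and the zero-multiplication action both preserve total $X$-count; for $\tB^{n}$ it follows from the closed description in~\eqref{eqconsitbar}, because every face and degeneracy map is built either from operadic compositions in $\LC$ (which act only on $\LC$-factors) or from dropping wedge summands (which cannot alter total $X$-count). Hence the natural map $\tB^{n}\btNC X\to \Sigma^{n}Q\btNC X$ splits as a wedge over $k\ge 1$, and it suffices to verify it one weight at a time.

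For weight $k=1$, only singleton subsets $s\subset \otop_{1}\times\dotsb\times \otop_{n}$ with the trivial partition $m_{1}=1$ contribute in~\eqref{eqconsitbar}. After collapsing the contractible length coordinates, the resulting multisimplicial $R$-module is the evident cellular model for $S^{1}\sma\dotsb\sma S^{1}\sma \LC(1)_{+}\sma X$, whose realization is $\Sigma^{n}(\LC(1)_{+}\sma X)\simeq \Sigma^{n}X$; this matches the weight-$1$ part of $\Sigma^{n}Q\btNC X$ via the map that collapses $\LC(1)$ to a point, which is a weak equivalence for cofibrant $X$.

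For weight $k\ge 2$ the right-hand side vanishes, so the task reduces to showing that the weight-$k$ piece of $\tB^{n}\btNC X$ has contractible realization. Pulling the $X^{(k)}$-factor out of~\eqref{eqconsitbar}, this piece has the form $|W^{n}_{k}|\sma_{\Sigma_{k}}X^{(k)}$, where $W^{n}_{k}$ is an $n$-fold multisimplicial $\Sigma_{k}$-space whose simplices are wedges over nonempty $s\subset \otop_{1}\times\dotsb\times\otop_{n}$ and positive compositions $m_{1}+\dotsb+m_{|s|}=k$ of products $\LC(m_{1})\times\dotsb\times\LC(m_{|s|})$ together with length factors in $P$ and $\bar P$. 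Since those length factors are equivariantly contractible and can be collapsed, everything reduces to producing a $\Sigma_{k}$-equivariant contraction of $|W^{n}_{k}|$, or equivalently to showing $X^{(k)}\sma_{\Sigma_{k}}|W^{n}_{k}|$ is contractible.

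The hard part is precisely this contractibility. I intend to prove it by induction on $n$, using the base case $n=1$ (where one checks directly that the weight-$k$ part of $\tB\btNC[1]X$ is contractible for $k\ge 2$ by an explicit simplicial null-homotopy: insert an empty $\LC[1](1)$-cube at the boundary, using the non-unital triviality of the outer face maps to absorb it). For the inductive step, the outermost simplicial direction of $W^{n}_{k}$ can be interpreted, after the $P$ versus $\bar P$ distinction, as the simplicial $\tB$-type model of a weight-$k$ object to which the one-variable contraction applies, giving a deformation retraction at each fixed value of the inner $n-1$ directions; iterating produces the required contraction of $|W^{n}_{k}|$. An alternative, more conceptual route is to reduce cellularly in $X$ to the case of sphere spectra and invoke the approximation/recognition theorem identifying $\btNC S^{d}$ with the stabilization of $\Omega^{n}\Sigma^{n}S^{d}$, so that $\tB^{n}\btNC S^{d}$ becomes the $n$-fold delooping $\Sigma^{n}S^{d}=S^{n+d}$; this is ultimately the operadic Koszul self-duality of $\LC$, whose only surviving Koszul-dual weight is $k=1$.
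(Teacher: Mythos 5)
Your weight filtration is exactly the paper's decomposition $\tB^{n}\btNC X = \bigvee_{m>0}\SSB(m)\sma_{\Sigma_{m}}X^{(m)}$, and your weight-$1$ analysis is fine. But your main line of attack — a direct $\Sigma_{k}$-equivariant contraction of the weight-$k$ piece for $k\geq 2$ by inducting on $n$ — has a genuine gap at the inductive step, and it is precisely the step the paper declines to pursue, remarking that the ``analysis of the construction of $\SSB(m)$ is rather complicated for a direct argument'' and citing Fresse's algebraic treatment. The difficulty is that the outermost simplicial direction of $W^{n}_{k}$ is not the $\tB$ of a weight-graded free non-unital algebra in any obvious sense: $\tB^{n-1}\btNC X$ is not free, and the weight-$k$ piece of $\tB^{n}\btNC X$ is not $\tB$ applied to the weight-$k$ piece of $\tB^{n-1}\btNC X$ (the functor $\tB$ is not additive in its argument). ``Applying the one-variable contraction at each fixed value of the inner $n-1$ directions'' is therefore not a well-posed move, and you haven't supplied the compatibility needed for the fiberwise contractions to glue. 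As stated, this is hand-waving rather than a proof.

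Your ``alternative, more conceptual route'' is much closer to what the paper actually does, but it also needs repair: you cannot reduce cellularly in $X$ to sphere spectra, because $\btNC$ and $\tB^{n}$ do not take cofiber sequences in $X$ to cofiber sequences. The paper instead observes that each $\SSB(m)$ is a free based $\Sigma_{m}$-CW complex, that $\SSB(m)$ is a wedge summand of $\SSB(m)\sma_{\Sigma_{m}}(\otom_{+})^{(m)}$, and hence that the $R$-homology of $\SSB(m)$ can be read off from the single test object $X=R^{0}_{c}\sma\otom_{+}$. This reduces the lemma to the case $X = R^{0}_{c}\sma(\text{finite set})_{+}$. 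The remaining work — a change-of-augmentation zigzag replacing $\btLC(R^{0}_{c}\sma X_{+})$ by $R\sma(\btLC X)_{+}$, passage to spaces via $R\sma(-)_{+}$, and then the group completion theorem together with the Moore-loop-space delooping zigzag of \cite[14.3]{MayClass} applied inductively — is where the real content lies and is not subsumed by the one-line appeal to ``Koszul self-duality.'' So your second route identifies the right landmark (the recognition principle) but skips both the reduction step that makes it applicable and the technical mechanism that carries it out.
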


The functor $\Sigma^{n}Q\btNC X$ is canonically isomorphic to the
$n$-th suspension functor $\Sigma^{n}$; moreover, the natural
transformation  $\tB^{n}\btNC \to \Sigma^{n}$ is split by the natural
transformation
\[
\Sigma^{n}\to \Sigma^{n}\btNC  \to \tB^{n}\btNC,
\]
induced by the inclusion of the singleton subsets
in~\eqref{eqconsitbar}.   Thus, to prove Lemma~\ref{lemqt}, it
suffices to prove the following lemma.

\begin{lem}\label{lemreverse}
For any cofibrant $R$-module $X$, the natural map $\Sigma^{n}X\to
\tB^{n}\btNC X$ is a weak equivalence. 
\end{lem}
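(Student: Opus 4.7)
The plan is to exploit the explicit formula \eqref{eqconsitbar} for the iterated bar construction and a filtration by weight in $X$, showing that only the first layer contributes. Substituting $N=\btNC X$ into \eqref{eqconsitbar} and expanding the free-algebra power $(\btNC X)^{(r)} = \bigvee_{m_1,\dotsc,m_r \geq 1} (\LC(m_1)\times\dotsb\times\LC(m_r))_+ \sma_{\Sigma_{m_1}\times\dotsb\times\Sigma_{m_r}} X^{(m_1+\dotsb+m_r)}$ represents $\tB^n\btNC X$ as a wedge indexed by pairs $(s,(m_i))$. Assign such a summand the weight $k := m_1+\dotsb+m_{|s|}$, and let $F_k$ denote the sub-multisimplicial $R$-module spanned by summands of weight at most $k$. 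The inner face maps of the bar construction multiply adjacent $\LC$-factors through an $\LC(2)$-element, preserving $k$; the outer face maps and degeneracies act by identity on the $X^{(k)}$ factor. Thus $F_\bullet$ is a multisimplicial filtration with $F_0=*$, exhausting $\tB^n\btNC X$ and finite in each fixed multisimplicial degree.

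The first layer $F_1$ forces $|s|=m_1=1$: the summand at a singleton $s=\{(j_1,\dotsc,j_n)\}$ is $P^n_+ \sma \LC(1)_+ \sma X$, and the multisimplicial set of singleton subsets of $\otop_1\times\dotsb\times\otop_n$ (with the ``merge'' face maps induced by \eqref{eqconsitbar}) identifies canonically with $S^1_\ssdot\sma\dotsb\sma S^1_\ssdot$. Hence $F_1 \iso P^n_+ \sma \LC(1)_+ \sma X \sma S^1_\ssdot \sma \dotsb \sma S^1_\ssdot$, whose geometric realization is weakly equivalent to $\Sigma^n X$ since $P^n$ and $\LC(1)$ are contractible. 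The map $\Sigma^n X \to \tB^n\btNC X$ in the statement is built precisely from the singleton subsets and the identity of $\LC(1)$, so it factors through $|F_1|$ and realizes this equivalence.

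The remaining task, and the main obstacle, is to show $|F_k/F_{k-1}|\simeq *$ for every $k\geq 2$; combined with the cofiber sequences $|F_{k-1}|\to |F_k|\to |F_k/F_{k-1}|$ and the exhaustive filtration, this yields the lemma. The subquotient factors as $|E_{k,n}|\sma_{\Sigma_k} X^{(k)}$ for a multisimplicial $\Sigma_k$-equivariant $R$-module $E_{k,n}$ depending only on $k$ and $n$, built from the operad spaces $\LC(m_i)$, the length spaces $P^{\nu_i(s)}$, and the combinatorial data of partitions of $k$ and subsets of $\otop_1\times\dotsb\times\otop_n$. My plan for proving contractibility of $|E_{k,n}|$ is induction on $n$. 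For the base case $n=1$, reduce to the classical fact that the reduced bar construction of a free non-unital associative $R$-algebra is weakly equivalent to the suspension of its generators: the weight-$k$ piece for $k\geq 2$ is the realization of a $\Delta$-complex of compositions of $k$ into positive parts (smashed with $X^{(k)}$ and the contractible length factors), contractible by the standard acyclicity argument coming from the partition lattice. For the inductive step, use Theorem~\ref{thmbarnu} to view $\tB\btNC X$ as a non-unital $\LC[n-1]$-algebra, and combine with a Dunn-type additivity identification of its weight-$k$ part to reduce to the inductive hypothesis for $n-1$.

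The delicate point in the induction is this Dunn-type identification, which requires comparing the $\LC[n-1]$-structure on $\tB\btNC X$ (built through the interchange map and last-coordinates embedding) with a free-algebra decomposition of its weight-$k$ part. Once this comparison is made, the outer $(n-1)$-fold bar construction sees only the image of this free-algebra structure, and the inductive hypothesis forces vanishing in weights $k\geq 2$, completing the proof.
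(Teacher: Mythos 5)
Your weight filtration is exactly the paper's homogeneous decomposition $\tB^{n}\btNC X = \bigvee_{m>0}\SSB(m)\sma_{\Sigma_{m}}X^{(m)}$, and your identification of $F_{1}$ with the image of $\Sigma^{n}X$ is correct; so both proofs reduce to showing that $\SSB(1)\simeq S^{n}$ and that $\SSB(m)$ is $R$-homology trivial for $m\geq 2$. The paper explicitly flags that a direct analysis of $\SSB(m)$ ``is rather complicated'' and deliberately sidesteps it by the trick with $X = R^{0}_{c}\sma \otom_{+}$: since $\SSB(m)\sma_{\Sigma_{m}}(\otom_{+})^{(m)}$ contains $\SSB(m)$ as a wedge summand, it suffices to treat $X$ of the form $R^{0}_{c}\sma Y_{+}$ for finite sets $Y$, which lets them pass to $R\sma(\btLC Y)_{+}$, then to the bar construction on $\LC$-\emph{spaces}, and finally to May's classifying space zigzag and the group completion theorem (Proposition~\ref{propdeloop}). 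You are attempting the direct route the paper avoided.

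The gap is in your inductive step. The ``Dunn-type additivity identification'' you invoke would have to say, in effect, that $\tB\,\btNC[n]X$ is weakly equivalent to $\btNC[n-1](\Sigma X)$ \emph{as a (partial) $\LC[n-1]$-algebra}, or at least that the weight-$k$ symmetric sequence $\SSB_{n}$ arises from $\SSB_{n-1}$ by a formula to which the inductive hypothesis applies. This is not a consequence of Dunn additivity ($\LC[n]\simeq \LC[1]\otimes\LC[n-1]$); it is a Koszul self-duality statement for $E_{n}$-operads (``bar of free $E_{n}$ = free $E_{n-1}$ on the suspension''), which is a theorem of comparable or greater depth than the lemma you are proving, and is not available in this paper. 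Moreover, $\tB^{n-1}$ is applied to a \emph{partial} $\LC[n-1]$-algebra and does not commute with wedge decompositions of the underlying module, so one cannot simply compose symmetric sequences to relate $\SSB_{n}(k)$ to the $\SSB_{n-1}(j)$; and even granting the identification up to weak equivalence, you would need a homotopy-invariance statement for $\tB^{n-1}$ on partial $\LC[n-1]$-algebras with the relevant cofibrancy, which is also not established. Your $n=1$ base case (acyclicity of the reduced bar complex of a free non-unital associative algebra, via the Moore model) is fine, but as written the induction does not close, and closing it would require either proving the Koszul duality statement or falling back on something like the paper's reduction to finite sets and the delooping machine.
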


We can make a further reduction by identifying $\tB^{n}\btNC X$ as the
functor associated to a symmetric sequence.  Applying the
explicit description of $\tB^{n}$ in the previous section, we note
that the face and degeneracies in $\tB^{n}\btNC X$ preserve
homogeneous degrees in $X$.  We can therefore decompose $\tB^{n}\btNC$
naturally into a wedge sum of its homogeneous pieces,
\[
\tB^{n}\btNC X= \bigvee_{m> 0} \SSB(m)\sma_{\Sigma_{m}}X^{(m)},
\]
where each $\SSB(m)$ is a based $\Sigma_{m}$-space; in fact, each
$\SSB(m)$ is a free based $\Sigma_{m}$-cell complex since it is the
geometric realization of a multisimplicial $\Sigma_{m}$-space that in
each multisimplicial degree is a wedge of pieces of the form
\[
\left(
(\bar P^{i} \times P^{j})\times 
(\LC(m_{1})\times\dotsb \times \LC(m_{r}))
\times_{\Sigma_{m_{1}}\times \dotsb \times
\Sigma_{m_{r}}}\Sigma_{m}
\right)_{+}.
\]
The natural map
$\Sigma^{n}X\to \tB^{n}\btNC X$ is induced by the inclusion of $S^{n}$
in $\SSB(1)$.   Thus, it suffices to show that the map $S^{n}\to
\SSB(1)$ induces a weak equivalence on $R$-homology and that each
$\SSB(m)$ has trivial $R$-homology.  

Although it is not hard to show directly that the map $S^{n}\to
\SSB(1)$ is a weak equivalence, analysis of the construction of
$\SSB(m)$ is rather complicated for a direct argument
(cf.~\cite[\S 8]{FresseItBar})
and we take a shorter oblique approach in terms of the functor 
$\tB^{n}\btNC$.  Let $R^{0}_{c}$ be a cofibrant $R$-module equivalent
to $R$, and consider the set with $m$ elements,
$\otom=\{1,\dotsc,m\}$.   We note that
$\SSB(m)\sma_{\Sigma_{m}}(\otom_{+})^{(m)}$ contains $\SSB(m)$ as a
wedge summand, and so 
\[
\pi_{*}(\tB^{n}\btNC(R^{0}_{c}\sma \otom_{+}))
\]
contains the $R$-homology $R_{*}\SSB(m)$ as a direct summand.
Rewriting in terms of the natural transformation  $\Sigma^{n}\to
\tB^{n}\btNC$, we have reduced Lemma~\ref{lemreverse} to the following
lemma.

\begin{lem}\label{lemfs} 
The natural map
$\Sigma^{n}(R^{0}_{c}\sma X_{+})\to 
\tB^{n}\btNC (R^{0}_{c}\sma X_{+})$ is a weak equivalence for every
finite set $X$.
\end{lem}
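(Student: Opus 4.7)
The plan is to proceed by induction on $n$, reducing at each step via an identification of $\tB$ applied to a free $\LC$-algebra with a free $\LC[n-1]$-algebra on a suspension. For the base case $n=1$, the statement becomes that the bar construction of the free non-unital associative $R$-algebra on $Z = R^0_c\sma X_+$ is equivalent to $\Sigma Z$. This is classical: the simplicial Moore-algebra bar complex $\tB\subdot \btNC[1] Z$ decomposes by total tensor-degree in $Z$, and a direct contracting homotopy built from the concatenation structure of the free algebra collapses everything onto the tensor-degree-one summand, whose realization is $\Sigma Z$. One can verify this explicitly using the $\oT^{\ssdot}$-colimit description of the Moore algebra.

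For the inductive step, the heart of the argument is an intermediate identification
\[
\tB\, \btNC Z \simeq \btNC[n-1] (\Sigma Z)
\]
as partial $\LC[n-1]$-algebras. The interchange map $\rho$ of Definition~\ref{definterchange} provides the key mechanism: the first-coordinate $\LC[1]$-structure used to construct $\tB$ commutes with the residual last-coordinates $\LC[n-1]$-structure on $\btNC Z$, so that running the bar construction on the $\LC[1]$-factor produces $\Sigma$ while leaving the $\LC[n-1]$-factor intact. Granting this equivalence, together with a mild strengthening of Lemma~\ref{lemfs} that allows inputs of the form $\Sigma^{k}(R^0_c\sma X_+)$ (needed because $\Sigma Z$ is such a suspension, not a plain $R^0_c\sma Y_+$), the induction closes via
\[
\tB^{n}\btNC Z \simeq \tB^{n-1}\btNC[n-1](\Sigma Z) \simeq \Sigma^{n-1}(\Sigma Z) = \Sigma^{n}Z,
\]
and naturality guarantees the resulting equivalence is the map in the statement.

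The main obstacle is establishing the intermediate identification $\tB\,\btNC Z\simeq \btNC[n-1](\Sigma Z)$. This is a higher-operadic strengthening of the base-case formula, closely related to the Dunn-style additivity $\LC\simeq \LC[1]\otimes\LC[n-1]$ of little cubes operads. An explicit proof requires unwinding Construction~\ref{consitthh}-style multisimplicial descriptions of $\tB$ applied to a free $\LC$-algebra, then tracking how the operad composition distributes across the Moore-algebra length parameters and the wedge decomposition of $\btNC Z$ indexed by functions of finite sets into $X$. The finiteness of $X$ is essential: it ensures that each smash power $(R^0_c\sma X_+)^{(m)}$ is a finite free $\Sigma_{m}$-wedge of copies of $R^{0\,(m)}_c\simeq R$, so the wedge decompositions remain controllable and smash-power functors preserve the relevant weak equivalences throughout the induction.
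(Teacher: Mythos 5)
The strategy you sketch—induct on $n$ via an intermediate identification $\tB\,\btNC Z\simeq \btNC[n-1](\Sigma Z)$ of partial $\LC[n-1]$-algebras—is conceptually sound and is, at a philosophical level, an operadic/Koszul-duality strengthening of what the paper proves. However, you have not proved the identification, and it \emph{is} the entire content of the lemma. Your own paragraph beginning ``The main obstacle\dots'' acknowledges this and then describes, but does not carry out, the ``unwinding'' that would be required. The paper is quite explicit that this direct route is out of reach within its framework: after decomposing $\tB^{n}\btNC X$ into homogeneous pieces $\SSB(m)\sma_{\Sigma_m}X^{(m)}$, the authors remark that ``analysis of the construction of $\SSB(m)$ is rather complicated for a direct argument (cf.~\cite[\S 8]{FresseItBar}) and we take a shorter oblique approach.'' Your proposed intermediate identification is exactly what a direct computation of $\SSB(m)$ (one stage at a time) would give, so asserting it without proof leaves the main gap exactly where the paper says the difficulty lies.

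The paper instead sidesteps the operad-level computation in spectra by transferring to spaces. It first replaces $R^{0}_{c}$ by $R$ (using a zigzag of $\LC$-algebras that is carefully arranged to respect the augmentations), so that the free algebra becomes $R\sma(\btLC X)_{+}$, reducing Lemma~\ref{lemfs} to Lemma~\ref{lemswitch}. Since the space-level bar construction commutes with $R\sma(-)_{+}$, this in turn reduces to the purely topological Proposition~\ref{propdeloop}: $\Sigma^{n}X_{+}\to B^{n}\btLC X$ is an equivalence for finite $X$. That statement is then proved by exactly the inductive scheme you describe—but in spaces, where the group completion theorem, the approximation theorem for $C_{j}$, and May's classifying-space zigzag~\eqref{eqmayclass} are available to establish $C_{n-i}\Sigma^{i}X_{+}\xrightarrow{\simeq}B^{i}\btLC X$. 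None of those tools apply as stated to the $R$-module-level identification you propose; in spectra one would have to verify $\SSB(m)\simeq_{R}\Sigma^{m}\LC[n-1](m)_{+}$ equivariantly by hand, which is the Fresse-style analysis the paper declines to do. So your plan identifies the right landmarks but does not close the argument: to complete it you would either need to carry out the symmetric-sequence computation directly, or (as the paper does) transfer to spaces where group completion does the work—at which point you are no longer ``staying in $R$-modules'' and the detour through Lemma~\ref{lemswitch} and the augmentation-correcting zigzag becomes necessary.

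A secondary, smaller point: even granting the intermediate identification, you correctly flag that the induction requires Lemma~\ref{lemfs} to hold for inputs of the form $\Sigma^{k}(R^{0}_{c}\sma X_{+})$, not merely $R^{0}_{c}\sma X_{+}$. This is a real strengthening that needs a statement and justification (the paper avoids this by inducting on a different space-level statement), so it should not be waved off as ``mild'' without checking that the purported identification and its compatibility with the $\LC[n-1]$-structure on $\Sigma Z$ persist under iteration.
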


Lemma~\ref{lemfs} follows from the
analogous statement in terms of augmented algebras, that the map
\[
R\vee \Sigma^{n}(R^{0}_{c}\sma X_{+})\to
B^{n}\btLC (R^{0}_{c}\sma X_{+})
\]
is a weak equivalence for all finite sets $X$.  We have an isomorphism
of $\LC$-algebras
\[
\btLC (R \sma X_{+})\iso R\sma (\btLC X)_{+}
\]
where $\btLC X$ is the free $\LC$-space on $X$.  The weak equivalence
$R^{0}_{c}\to R$ induces a weak equivalence of $\LC$-algebras
\[
\btLC(R^{0}_{c}\sma X_{+})\to \btLC(R\sma X_{+})\iso
R\sma (\btLC X)_{+},
\]
but this is not a map of augmented $\LC$-algebras: The augmentation on
the left is induced by the trivial map $X_{+}\to *$, but the
augmentation on the right is induced by the trivial map $\btLC X\to
*$.  In terms of $\btLC(R\sma X_{+})$, the right-hand augmentation is
induced by $X_{+}\to S^{0}$ and the $\LC$-action $\btLC R\to R$.
Since we are free to choose any cofibrant model $R^{0}_{c}$, choosing
one that is a suspension, we can construct a map 
\[
\alpha \colon R^{0}_{c}\sma X_{+}\to 
R^{0}_{c} \vee R^{0}_{c}\sma X_{+}
\]
that represents the sum of the map $X_{+}\to S^{0}$ and the identity on
$X_{+}$ smashed with $R^{0}_{c}$.  Write $\epsilon$ for the composite
map 
\[
R^{0}_{c}\sma X_{+}\to R^{0}_{c}\to R,
\]
which is homotopic to (but probably not equal to) the map induced by
$X_{+}\to S^{0}$, and choose a homotopy $h$.   Using $\alpha$, we get
a map of $\LC$-algebras 
\[
\bar \alpha \colon \btLC(R^{0}_{c}\sma X_{+})\to 
\btLC(R^{0}_{c}\sma X_{+}),
\]
which respects augmentations when we give the copy on the left the
augmentation 
induced by $\epsilon$; an easy filtration argument shows that this map
is a weak equivalence.  Now we get a diagram of weak equivalences of
augmented $\LC$-algebras
\[
\xymatrix{%
\btLC(R^{0}_{c}\sma X_{+}\sma\{0\}_{+})\ar[r]\ar[d]_{\bar \alpha}
&\btLC(R^{0}_{c}\sma X_{+}\sma I_{+})
&\btLC(R^{0}_{c}\sma X_{+}\sma \{1\}_{+})\ar[l]\ar[d]\\
\btLC(R^{0}_{c}\sma X_{+})
&&R\sma (\btLC X)_{+}
}
\]
which respects augmentations when we use the augmentation induced by
$X_{+}\to S^{0}$ on the right and the augmentation induced by $h$ in
the middle (on $\btLC(R^{0}_{c}\sma X_{+}\sma I_{+})$).  Since
applying $B^{n}$ preserves these weak equivalences, we get that 
\[
B^{n}\btLC(R^{c}_{0}\sma X_{+})
\qquad\text{and}\qquad
B^{n}(R \sma (\btLC X)_{+}).
\]
are weakly equivalent.

The map in Lemma~\ref{lemfs} is induced by the inclusion of
$R^{0}_{c}\sma X_{+}$ in $\btNC (R^{0}_{c}\sma X_{+})$ and the
section~\eqref{eqsuspsplit} 
of the natural map~\eqref{eqsusp}
\[
\Sigma^{n}_{R}M(\btLC (R^{0}_{c}\sma X_{+})) \to B^{n}\btLC
(R^{0}_{c}\sma X_{+}). 
\]
We can follow the natural map~\eqref{eqsusp} along the diagram of weak
equivalences between 
$B^{n}\btLC(R^{c}_{0}\sma X_{+})$ and $B^{n}(R \sma (\btLC X)_{+})$
above, and lift the map from 
\[
R\vee \Sigma^{n}(R^{0}_{c}\sma X_{+}) = \Sigma^{n}_{R}(R^{0}_{c}\sma X_{+})
\]
up to
homotopy all the way around to a map
\[
R\vee \Sigma^{n}(R^{0}_{c}\sma X_{+})\to 
\Sigma^{n}_{R}M(R \sma (\btLC X)_{+})\to
B^{n}(R \sma (\btLC X)_{+}).
\]
Specifically, the map from $R\vee \Sigma^{n}(R^{0}_{c}\sma
X_{+})$ to each of 
\[
\begin{small}
\xymatrix@C-1pc@R-1pc{%
\Sigma^{n}_{R}M\btLC(R^{0}_{c}\sma X_{+}\sma\{0\}_{+})\ar[d]\ar[r]
&\Sigma^{n}_{R}M\btLC(R^{0}_{c}\sma X_{+}\sma I_{+})
&\Sigma^{n}_{R}M\btLC(R^{0}_{c}\sma X_{+}\sma \{1\}_{+})\ar[l]\ar[d]\\
\Sigma^{n}_{R}M\btLC(R^{0}_{c}\sma X_{+}),
&&\Sigma^{n}_{R}M(R\sma (\btLC X)_{+})
}
\end{small}
\]
induces a weak equivalence on the submodules in homogeneous filtration
one (and below).  In particular, on the bottom right, looking at the map 
\[
R\vee \Sigma^{n}(R^{0}_{c}\sma X_{+})\to 
\Sigma^{n}_{R}M(R \sma (\btLC X)_{+})
\simeq
\Sigma^{n}_{R}(R \sma (\btLC X)_{+})=
R\sma (\Sigma^{n}(\btLC X))_{+},
\]
we now see that the map in Lemma~\ref{lemfs} is a weak equivalence 
if and only if the map 
\[
R\sma (\Sigma^{n}X_{+})_{+}\to
B^{n}(R \sma (\btLC X)_{+})
\]
induced by the inclusion of $X$ in $\btLC X$ is a weak equivalence.
This reduces Lemma~\ref{lemfs}  to the following lemma.

\begin{lem}\label{lemswitch}
The natural map
$R\sma \Sigma^{n}(X_{+})_{+}\to 
B^{n}(R \sma (\btLC X)_{+})$ is a weak equivalence for every 
finite set $X$.
\end{lem}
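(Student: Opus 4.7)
My plan is to reduce the claim to a classical space-level recognition principle, by exploiting the compatibility of the iterated bar construction with the functor $R\sma(-)_{+}$. Concretely, I first observe that for any augmented topological $\LC$-space $Y$, there is a natural isomorphism
\[
B^{n}(R\sma Y_{+})\iso R\sma (B^{n}Y)_{+},
\]
where on the right $B^{n}Y$ denotes the space-level analogue of the iterated bar construction from Section~\ref{secbar}.  This follows from direct inspection of the multisimplicial construction in~\eqref{eqconsitbar}: each multisimplicial degree is a wedge of terms built out of products of length-parameter spaces $P$ and $\bar P$ smashed with iterated smash powers of $R\sma Y_{+}$ over $R$.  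Since $R\sma(-)_{+}$ converts Cartesian products to $\sma_{R}$ while preserving wedges and colimits, both constructions yield canonically isomorphic multisimplicial $R$-modules.  Applied to $Y=\btLC X$, this reduces Lemma~\ref{lemswitch} to showing that the space-level natural map $\Sigma^{n}X_{+}\to B^{n}(\btLC X)$ is a weak equivalence, which then propagates through $R\sma(-)_{+}$ on cofibrant inputs.

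For this space-level statement, I would invoke May's recognition principle.  The free $\LC$-space $\btLC X$ on the unbased finite set $X$ is a little $n$-cubes model for the free $E_{n}$-space on $X_{+}$, whose group completion is weakly equivalent to $\Omega^{n}\Sigma^{n}X_{+}$.  Combined with the classical fact that the bar construction of a topological monoid coincides with that of its group completion, this gives $\Omega^{n}B^{n}(\btLC X)\simeq\Omega^{n}\Sigma^{n}X_{+}$, and since $\Sigma^{n}X_{+}$ is connected for $n\ge 1$, delooping $n$ times yields $B^{n}(\btLC X)\simeq\Sigma^{n}X_{+}$.  Tracing through the identifications confirms that this equivalence is realized by the natural map induced by $X\hookrightarrow\btLC X$.

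The main obstacle is the precise comparison between the multisimplicial construction of $B^{n}$ used in Section~\ref{secbar} (which incorporates Moore-length parameters $P,\bar P$ and handles partial algebras) and the classical bar construction used in May's theorem.  To bridge this, one must show that the length-parameter factors contribute contractibly and that, after Moore's associative adjustment, the two constructions agree up to a natural weak equivalence of $\LC[n-1]$-algebras.  An alternative approach avoiding May's theorem entirely would be to prove the lemma directly by a weight filtration: the weight-one summand in the bar construction of $\btLC(R\sma X_{+})$ gives $\Sigma^{n}(R\sma X_{+})$ through the natural map, and the higher-weight summands should admit an explicit contracting homotopy analogous to the acyclicity argument in~\cite{FresseItBar}.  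Either route requires some combinatorial bookkeeping, but avoids any circular appeal to Theorem~\ref{thmtq}.
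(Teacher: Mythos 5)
Your reduction of the lemma to a space-level statement matches the paper's own approach exactly: the paper observes the natural isomorphism $B^{n}(R\sma Z_{+})\iso R\sma(B^{n}Z)_{+}$ for partial $\LC$-spaces $Z$ and thereby reduces Lemma~\ref{lemswitch} to Proposition~\ref{propdeloop}, that $\Sigma^{n}X_{+}\to B^{n}\btLC X$ is a weak equivalence. Up to this point you are on track.

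Your proof of the space-level statement, however, has a genuine gap, one you partially acknowledge yourself. Your main route asserts that group completion gives $\btLC X\to\Omega^{n}\Sigma^{n}X_{+}$, that the bar construction is insensitive to group completion, and that "delooping $n$ times" yields $B^{n}\btLC X\simeq\Sigma^{n}X_{+}$. The last step requires knowing that \emph{this particular} iterated bar construction — the one built from Moore-length parameters and length-matching partial power systems — is an $n$-fold delooping machine. That is not a classical fact one can simply invoke; it has to be established for the construction at hand, and this is precisely what the paper's proof does. The remark closing Section~\ref{sectqh} explicitly considers deducing Proposition~\ref{propdeloop} from a uniqueness theorem for delooping machines (Dunn's) and rejects this as "more complicated than the direct argument." The paper instead runs an induction on $i$, establishing that the inclusion of $\Sigma^{i}X_{+}$ induces a weak equivalence of partial $\LC[n-i]$-spaces $C_{n-i}\Sigma^{i}X_{+}\to B^{i}\btLC X$. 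The key technical input is that May's zigzag $B(\Omega_{M}Z)\leftarrow B(P_{M}Z,\Omega_{M}Z,*)\to Z$ consists of maps of partial $\LC[j]$-spaces once the Moore loop and path spaces are given the length-matching partial power structure; this is what carries the residual $\LC[n-i]$-structure through each delooping and is exactly the content your sketch elides when it says the comparison "must be shown."

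Your "alternative approach" — a weight filtration with an explicit contracting homotopy on higher-weight summands — corresponds to what the paper describes, in the reduction from Lemma~\ref{lemreverse} to Lemma~\ref{lemfs}, as analyzing the spaces $\SSB(m)$ directly, and dismisses as "rather complicated," citing Fresse. Beyond the fact that you do not carry out the contraction, this route would also be logically awkward here: the weight decomposition is how the paper \emph{arrived} at the present lemma, so proving the lemma by that decomposition would essentially re-traverse the chain of reductions rather than close it.
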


The point of this lemma is that it lets us compare with the classical
bar construction on spaces.  The construction $B^{n}$ in the previous
section used little about the category of $R$-modules and generalizes
to an iterated bar construction on the category of partial
$\LC$-spaces (where we use the cartesian product to define partial
power systems).  In fact, in spaces, it is much easier to describe
because we can talk in terms of  elements.  For a $\LC$-space $A$, each
element of the Moore construction $\MA$ has a length, and we make the
bar construction
\[
B\subdot A=B\subdot(\MA)=\MA\times \dotsb \times \MA
\]
into a partial power system by insisting that the lengths match up: We
take the $m$-th partial power $(B_{p} A)_{m}$ to be the
subset of $(B_{p}A)^{m}=(\MA^{p})^{m}$ where the length vectors for
each of the $m$ copies of $\MA^{p}$ all agree.  We have an entirely
similar description when $A$ is a partial $\LC$-space (noting that
elements of $\MA[\ssdot]$ also have sequences of lengths).  We also
note that when $A=\Omega Z$ with its
$\LC[1]$-structure coming from the standard $\LC[1]$-structure on the
loop space, then by construction, $\MA$ is the Moore loop space
$\Omega_{M}Z$. 

The functor $R\sma(-)_{+}$ from unbased spaces to $R$-modules takes
partial power systems to partial power systems.  By inspection, for
any partial $\LC$-space $Z$, we have an isomorphism of partial
$\LC[n-1]$-algebras
\[
B(R\sma Z_{+})\iso R\sma (BZ)_{+},
\]
and iterating, an isomorphism $B^{n}(R\sma Z_{+})\iso R\sma
(B^{n}Z)_{+}$.  Finally, Lemma~\ref{lemswitch} is a consequence of the
following proposition.

\begin{prop}\label{propdeloop}
For any finite set $X$, the map $\Sigma^{n} X_{+}\to B^{n}\btLC X$ is a weak
equivalence. 
\end{prop}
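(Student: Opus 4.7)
The plan is to prove Proposition~\ref{propdeloop} by identifying the iterated bar construction applied to the free $\LC$-space $\btLC X$ with the classical $n$-fold delooping of this labelled configuration space, then invoking the approximation theorem of Cohen--Lada--May. First, I would observe that the constructions and multisimplicial description of Section~\ref{secbar2}, including the explicit formula~\eqref{eqconsitbar}, carry over to the category of (partial) $\LC$-spaces using the cartesian product in place of $\sma_R$, as already noted in the discussion just before the statement. The length factors $P$ and $\bar P$ in~\eqref{eqconsitbar} are contractible, so after collapsing them each multisimplicial level is homotopy equivalent to the corresponding level of May's standard iterated two-sided bar construction for $\LC$-spaces, via a combinatorial matching of face and degeneracy operations.

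Next, for $X$ a finite set, $\btLC X$ is (a model for) the labelled configuration space $C_n X$, and the canonical map $\btLC X\to \Omega^n \Sigma^n X_+$ is a group completion by the approximation theorem. Since $\Sigma^n X_+$ is $(n-1)$-connected for $n\ge 1$, the classical $n$-fold delooping of $\btLC X$ recovers $\Sigma^n X_+$ on the nose. Combined with the multisimplicial comparison of the previous paragraph, this yields the desired equivalence $B^n \btLC X \simeq \Sigma^n X_+$, and the map in the proposition is shown to realize this equivalence by tracing through the naturality of the map~\eqref{eqsusp}.

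The main obstacle is the identification of the present paper's $B^n$ with the classical two-sided monadic bar construction, since the former uses Moore-style length parameters and an Eckmann--Hilton interchange rather than a monadic resolution. An alternative strategy is direct induction on $n$: the base case $n=1$ is the classical fact that the classifying space of the free topological monoid on a finite set $X$ is weakly equivalent to $\Sigma X_+$ (its fundamental group is the free group on $X$ and it is aspherical); for the inductive step, one would show that $B\btLC X$ is weakly equivalent, as a partial $\LC[n-1]$-space, to a suitable free model built on a finite skeleton of $\Sigma X_+$, and then reduce to the inductive hypothesis in that setting. In either approach the central technical task is verifying compatibility of the paper's bar construction with the classical one, which, given that both arise from the same geometric content (the Moore construction together with little cubes), amounts to a careful bookkeeping argument on the multisimplicial cell structures.
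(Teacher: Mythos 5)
You correctly identify the two key inputs---the group completion/approximation theorem and induction on $n$---and you correctly flag the central difficulty: iterating requires the one-step delooping $B\btLC X\simeq\Omega^{n-1}\Sigma^{n}X_{+}$ to be an identification of multiplicative objects, not merely of spaces. But you leave that difficulty unresolved, calling it a ``careful bookkeeping argument'' comparing the paper's $B^{n}$ to May's iterated monadic two-sided bar construction. The paper never makes that comparison, and there is no reason to expect it to reduce to bookkeeping: the two constructions have genuinely different combinatorics, and a direct level-by-level matching is not what makes the argument go.

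What the paper does instead is observe that May's classifying-space zigzag~\eqref{eqmayclass}, $B(\Omega Z)=B(\Omega_{M}Z)\from B(P_{M}Z,\Omega_{M}Z,*)\to Z$ for $Z$ connected, can be given the matching-lengths partial power system structure, and when $Z$ is a $\LC[j]$-space both legs then become maps of partial $\LC[j-1]$-spaces. This is the key lemma missing from your proposal. Combined with the universal property of May's reduced free functor $C_{j}$---which the paper extends so that based maps $Y\to Z$ into a \emph{partial} $\LC[j]$-space correspond to maps of partial $\LC[j]$-spaces $C_{j}Y\to Z$---the induction is set up as: $C_{n-i}\Sigma^{i}X_{+}\to B^{i}\btLC X$ is a weak equivalence of partial $\LC[n-i]$-spaces. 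The base case $i=1$ uses group completion to get $B\btLC X\simeq\Omega^{n-1}\Sigma^{n}X_{+}$ and the zigzag to see this multiplicatively; the inductive step applies $B$ to the inductive hypothesis, uses the approximation-theorem equivalence $C_{n-i}\Sigma^{i}X_{+}\to\Omega^{n-i}\Sigma^{n}X_{+}$, and threads through the zigzag with $Z=\Omega^{n-(i+1)}\Sigma^{n}X_{+}$. Without formulating the multiplicativity of the zigzag or the universal property of $C_{j}$ in the partial setting, your second strategy's inductive step has no actual content, and your first strategy's identification with the classical iterated bar construction is not established. Your observation that the case $n=1$ can be handled directly (the classifying space of the free monoid on $X$ is $\bigvee_{X}S^{1}$, a $K(F_{X},1)$) is correct, though the paper obtains it as a special case of group completion.
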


To prove the proposition, we inductively analyze $B^{i}\btLC X$.  The
inclusion $X\to \Omega^{n} \Sigma^{n} X_{+}$ induces a map of $\LC$-spaces
$\btLC X\to \Omega^{n} \Sigma^{n} X_{+}$.  By the group completion theorem,
this map induces a weak equivalence 
\begin{equation}\label{eqbase}
B\btLC X\to B\Omega^{n} \Sigma^{n}X_{+}\simeq 
\Omega^{n-1}\Sigma^{n}X_{+}.
\end{equation}
Since up to homotopy this map is compatible with the inclusion of
$\Sigma X_{+}$ in 
$B\btLC X$, this completes the argument in the case $n=1$.  For $n>1$,
we need 
the reduced free $\LC[j]$-space functor $C_{j}$ from
\cite[2.4]{MayGILS}; its fundamental formal property is that it gives an
adjunction between \emph{based} maps from a based spaced $Y$ to a
$\LC[j]$-space 
$Z$ and maps of $\LC[j]$-spaces from $C_{j}Y$ to $Z$.  Its fundamental
homotopical 
property is that when $Y$ is connected and nondegenerately based, the
universal map $C_{j}Y\to \Omega^{j}\Sigma^{j}Y$ is a weak
equivalence.  The fundamental formal property also holds when the
target $Z$ is a partial $\LC[j]$-space: For a based space $Y$, based
maps of partial power 
systems $Y\to Z$ are in bijective correspondence with maps of partial
$\LC[j]$-spaces $C_{j}Y\to Z$.

To be specific about the weak equivalence $B\Omega Z\simeq Z$
in~\eqref{eqbase}, we use the zigzag
in~\cite[14.3]{MayClass}  
\begin{equation}\label{eqmayclass}
B(\Omega Z)=B(\Omega_{M}Z) \xleftarrow{\,\simeq\,} 
B(P_{M}Z,\Omega_{M}Z,*)\xrightarrow{\,\simeq\,} Z,
\end{equation}
which is a weak equivalence whenever $Z$ is connected.
Here $P_{M}Z$ denotes the Moore based path space (the space of
positive length paths ending at the base point), the middle term
the classical two-sided bar construction for the action of the Moore
loop space on the Moore path space, and the maps are induced by the
trivial map $P_{M}Z\to *$ (on the left) and the start point projection
$P_{M}Z\to Z$ on the right.  When $Z$ is a $\LC[j]$-space,
we can make 
\[
B\subdot(P_{M}Z,\Omega_{M}Z,*)=
P_{M}Z\times \Omega_{M}Z\times \dotsb \times \Omega_{M}Z\times *
\]
a partial power system by taking the $m$-the partial power to be the
subset of the $m$-the power where the lengths match up, as for $B$
above.  Then both maps in the zigzag become maps of partial
$\LC[j]$-spaces (with the true power system for $Z$).

Returning to~\eqref{eqbase}, we have a zigzag of weak
equivalences of partial $\LC[n-1]$-spaces
\[
B\btLC X\to B\Omega^{n} \Sigma^{n}X_{+}
\from
B(P_{M}\Omega^{n-1}\Sigma^{n}X_{+},\Omega_{M}\Omega^{n-1}\Sigma^{n}X_{+},*)
\to \Omega^{n-1}\Sigma^{n}X_{+}.
\]
We can now see that the inclusion of $\Sigma X_{+}$ into $B\btLC X$
induces a weak equivalence of partial $\LC[n-1]$-spaces
\[
C_{n-1}\Sigma X_{+}\to B\btLC X
\]
(for the true $\LC[n-1]$-space $C_{n-1}\Sigma X_{+}$).
We use this as the base case of an inductive argument: Assume by
induction that the natural map $\Sigma^{i}X_{+}\to B^{i}\btLC X$
induces a weak equivalence of partial $\LC[n-i]$-spaces 
\[
C_{n-i}\Sigma^{i}X_{+}\to B^{i}\btLC X.
\]
Applying $B$, the weak equivalence of $\LC[n-i]$-spaces
$C_{n-i}\Sigma^{i}X_{+}\to \Omega^{n-i}\Sigma^{n}X_{+}$ and the
zigzag~\eqref{eqmayclass} give us a zigzag of weak equivalences of
partial $\LC[n-(i+1)]$-spaces
\begin{multline*}
\Omega^{n-(i+1)}\Sigma^{n}X_{+}\from
B(P_{M}\Omega^{n-(i+1)}\Sigma^{n}X_{+},\Omega_{M}\Omega^{n-(i+1)}\Sigma^{n}X_{+},*)\\
\to B\Omega^{n-i}\Sigma^{n}X_{+}\from
BC_{n-i}\Sigma^{i}X_{+}\to B^{i+1}\btLC X.
\end{multline*}
The inclusions of $\Sigma^{i+1}X_{+}$ into each of these spaces agree
up to homotopy under these maps, and so the induced map of partial
$\LC[n-(i+1)]$-spaces 
$C_{n-(i+1)}\Sigma^{i+1}X_{+}\to B^{i+1}\btLC X$ is a weak equivalence.
This completes the proof of Proposition~\ref{propdeloop}, which
completes the proof of Theorem~\ref{thmtq}.

\begin{rem}
For $Z=\Omega^{j}Y$ for a $(j-1)$-connected space $Y$, the identification
of~\eqref{eqmayclass} as a zigzag of weak equivalences of partial
$\LC[j]$-spaces implies by induction that $B^{n}$ is an
``$n$-fold de-looping machine''.   As an alternative argument, it
should be possible to deduce Proposition~\ref{propdeloop} from a
uniqueness theorem for $n$-fold de-looping machines such as
\cite{dunndeloop}; however, translating the problem to the context in
which such a theorem applies is more complicated than the direct
argument above.
\end{rem}

\section{Further Structure on the Bar Construction}\label{secopn}

With an eye to using Theorem~\ref{maintq} for computations, we take
this final section to verify two of the expected properties of the
multiplication on the bar construction.  We begin by studying the
diagonal map on the bar construction, and we show that it commutes
with the $\LC[n-1]$-multiplication constructed in
Section~\ref{secbar}.  We then study power operations,
showing that 
the (dimension shifting) map on homotopy groups from a non-unital
$\LC$-algebra $N$ to its bar construction $BN$
preserves power operations in the expected way.  In this section, we
work in the context of true algebras since that is where these remarks
are of primary interest.  

Given an augmented $R$-algebra $A$, it is well-known that the bar
construction $BA$ admits a diagonal map
\[
BA\to BA\sma_{R} BA
\]
that is associative up to homotopy, even up to coherent homotopy.  The
best construction of this map uses ``edgewise subdivision''
\cite[\S1]{BHM}.  For a simplicial object, $X\subdot$, the edgewise
subdivision is the object $\sd X\subdot$ where $\sd X_{n}=X_{2n+1}$.
The argument for \cite[1.1]{BHM} shows that just as in the context of
simplicial sets or simplicial spaces, in the context of simplicial
$R$-modules, we have a natural isomorphism between the geometric
realization of $X\subdot$ and the geometric realization of the
edgewise subdivision $\sd X\subdot$.  We get the diagonal map on the
bar construction as the composite
\[
BA \iso |\sd B\subdot A| \to |B\subdot A\sma_{R} B\subdot A|\iso
BA\sma_{R}BA
\]
for a particular simplicial map $\sd B\subdot A\to B\subdot
A\sma_{R}B\subdot A$. 
This map in degree $m$ is the map
\[
(\sd B\subdot A)_{m} = A^{(2m+1)}\to A^{(m)}\sma_{R}A^{(m)}=
B_{m}A\sma_{R}B_{m}A
\]
that performs the augmentation $A\to R$ on the $(m+1)$-st factor of
$A$.  We prove the following theorem.

\begin{thm}\label{thmhopf}
Let $N$ be a non-unital $\LC$-algebra.  The diagonal map $BKN\to
BKN\sma_{R}BKN$ above is a
map of $\LC[n-1]$-algebras.
\end{thm}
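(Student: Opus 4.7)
The plan is to deduce the theorem from the naturality of the edgewise-subdivision diagonal with respect to maps of augmented partial associative $R$-algebras. Recall from the proof of Theorem~\ref{thmbarmult} that the partial $\LC[n-1]$-algebra structure on $BKN$, whose $m$-th partial power is $B\MoAlg{(KN)^{[m]}}$, is induced from interchange-type maps of partial associative $R$-algebras
\[
\MoAlg{(KN)^{[j]}} \to \MoAlg{(KN)^{[s]}} \qquad (j = j_1+\dotsb+j_s),
\]
determined by elements $(c_1,\dotsc,c_s)\in \LC[n-1](j_1)\times\dotsb\times \LC[n-1](j_s)$ via the last-coordinates embedding.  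Applying $B$ at each partial power level yields a map $\Delta \colon BKN \to BKN\sma_R BKN$, where the target carries the diagonal $\LC[n-1]$-action in which such an element acts as $c\sma c$ on the two smash factors of $(BKN)_j \sma_R (BKN)_j$.

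The first step is to upgrade the interchange-induced maps $\MoAlg{(KN)^{[j]}} \to \MoAlg{(KN)^{[s]}}$ to maps of \emph{augmented} partial associative $R$-algebras.  That they are maps of partial associative $R$-algebras is already part of the proof of Theorem~\ref{thmmainrel}.  For compatibility with the augmentations, I would trace through the definitions: the augmentation on $\MoAlg{(KN)^{[m]}}$ is obtained by collapsing lengths and applying the iterated augmentation $(KN)_m \to R$ of $(KN)^{[m]}$, which in turn is built from the $\LC$-algebra augmentation $\epsilon\colon KN\to R$; the interchange map is induced from an element of $\LC[n](j_i)$ acting on $KN$, and so since $\epsilon$ is a map of (partial) $\LC$-algebras, the two composites $\MoAlg{(KN)^{[j]}} \to \MoAlg{(KN)^{[s]}} \to R$ and $\MoAlg{(KN)^{[j]}} \to R$ agree.

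The second step invokes the naturality of the simplicial map $\sd B_{\ssdot} A \to B_{\ssdot}A\sma_R B_{\ssdot}A$ from the paragraphs preceding the statement: in degree $m$ it is the composite of $\lambda_{m,1,m}\colon A_{2m+1}\to A_m\sma_R A_1 \sma_R A_m$ with the augmentation on the middle factor, so for any map $f\colon A\to B$ of augmented partial associative $R$-algebras the identity $(Bf \sma_R Bf)\circ \Delta_A = \Delta_B \circ Bf$ is immediate.  Applied to the interchange-induced maps of the first step and passed to geometric realization in each partial power level, this produces the required $\LC[n-1]$-equivariance of $\Delta$.  The main obstacle is the preliminary bookkeeping: one must pin down the ``diagonal'' partial $\LC[n-1]$-algebra structure on the partial power system $BKN\sma_R BKN$ and verify that $\Delta$ is in fact a map of partial power systems, i.e., compatible with the structure maps $\lambda$ and the symmetric group actions.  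Both reduce to functoriality of $B$ and of the diagonal together with tidiness hypotheses already available from Proposition~\ref{propitprop}.
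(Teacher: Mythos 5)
Your proposal is correct and takes essentially the same approach as the paper: reduce to checking that the simplicial map $\sd B\subdot A \to B\subdot A\sma_R B\subdot A$ (for $A=\MoAlg{KN}$) is a map of simplicial $\LC[n-1]$-algebras. Where the paper dismisses this key step as ``clear from the construction,'' you usefully articulate \emph{why}: the $\LC[n-1]$-structure is induced by the interchange maps $\MoAlg{A^{[j]}}\to\MoAlg{A^{[s]}}$, which are maps of augmented partial associative $R$-algebras, and the edgewise-subdivision diagonal (built from $\lambda_{m,1,m}$ and the augmentation) is natural with respect to such maps; conversely, where the paper spells out the commuting square of natural isomorphisms showing that the subdivision isomorphism on geometric realization respects the smash product, you defer this to ``preliminary bookkeeping'' — so the two proofs emphasize complementary halves of the same argument.
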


\begin{proof}
The edgewise subdivision functor and isomorphism on geometric
realization preserve smash
products of $R$-modules in the sense that the diagram of natural
isomorphisms
\[
\xymatrix{%
|X\subdot \sma_{R} Y\subdot|\ar[rr]\ar[d]
&&|X\subdot|\sma_{R}|Y\subdot|\ar[d]\\
|\sd(X\subdot\sma_{R} Y\subdot)|\ar[r]
&|(\sd X\subdot)\sma_{R} (\sd Y\subdot)|\ar[r]
&|\sd X\subdot|\sma_{R}|\sd Y\subdot|
}
\]
commutes.  It therefore suffices to check that the map from
$\sd B\subdot A$ to $B\subdot A\sma_{R}B\subdot A$ is a map of
simplicial $\LC[n-1]$-algebras (for $A=MKN$), and this is clear from the
construction of the $\LC[n-1]$-structure.
\end{proof}

We close with a remark on power operations.
For technical reasons about homotopy groups,
we restrict to the context of $R$-modules of orthogonal spectra or
EKMM $S$-modules for this discussion.  Consider a non-unital true
$\LC$-algebra $N$ and choose a representative map of $R$-modules
$R^{q}_{c}\to N$ where $R^{q}_{c}$ is some cofibrant version of the
$q$-sphere $R$-module.  We then get a map of non-unital true
$\LC$-algebras $\btNC R^{q}_{c}\to N$, where (as above) $\btNC$ denotes the free
non-unital $\LC$-algebra functor in $R$-modules.  The induced map
\[
\bigoplus_{m>0} R_{*}(\LC(m)_{+}\sma_{\Sigma_{m}}S^{(mq)})
\iso \pi_{*}(\btNC R^{q}_{c})\to \pi_{*}N
\]
depends only on the original $x\in \pi_{q}N$ and not on the
choice of representative.  Restricting to the $m$-th homogeneous
piece, we get the map
\[
\oP^{m}(x)\colon 
R_{*}(\LC(m)_{+}\sma_{\Sigma_{m}}S^{(mq)})\to \pi_{*}N,
\]
which we think of as the total $m$-ary $\LC$-algebra power operation
of $x$; we think of $R_{*}(\LC(m)_{+}\sma_{\Sigma_{m}}S^{(mq)})$ as
parametrizing the $m$-ary power operations on $\pi_{q}N$.
We relate the power operations on $\pi_{q}$ to the power operations
on $\pi_{q+1}$ using the suspension sequence 
\[
\btNC R^{q}_{c}\to \btNC CR^{q}_{c}\to \btNC R^{q+1}_{c}.
\]
The composite map is the  trivial map and the middle term has a
canonical contraction; this then defines a map
\[
\pi_{*}(\btNC R^{q}_{c})\to \pi_{*+1}(\btNC R^{q+1}_{c})
\]
and in particular a map
\[
\sigma \colon R_{*}(\LC(m)_{+}\sma_{\Sigma_{m}}S^{(mq)})\to
R_{*+1}(\LC(m)_{+}\sma_{\Sigma_{m}}S^{(m(q+1))})
\]
In terms of our work above, we have the following result.

\begin{thm}\label{thmopn}
For a non-unital true $\LC$-algebra $N$, the canonical map 
\[
\sigma \colon \pi_{*}N\to \pi_{*+1}\tB N
\]
preserves $m$-ary $\LC[n-1]$-algebra power
operations for all $m$, meaning that the diagrams
\[
\xymatrix{%
R_{*}(\LC[n-1](m)_{+}\sma_{\Sigma_{m}}S^{(mq)})
\ar[r]^{\sigma}\ar[d]_{\oP^{m}(x)}&
R_{*+1}(\LC[n-1](m)_{+}\sma_{\Sigma_{m}}S^{(m(q+1))})
\ar[d]^{\oP^{m}(\sigma x)}\\
\pi_{*}N\ar[r]_{\sigma}&\pi_{*+1}\tB N
}
\]
commute for all $x\in \pi_{q}N$.
Here we regard $N$ as a non-unital $\LC[n-1]$-algebra via the last
coordinates embedding of $\LC[n-1]$ in $\LC[n]$.
\end{thm}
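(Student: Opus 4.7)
The plan is to use naturality to reduce to a universal example, then verify the identity by unwinding the construction of the $\LC[n-1]$-action on $\tB$ from Section~\ref{secbar2}. Both $\oP^{m}(-)$ and the suspension map $\sigma\colon \pi_{*}N\to \pi_{*+1}\tB N$ (induced by the natural map $\Sigma N\to \tB N$ of~\eqref{eqsuspsplit}) are natural in $N$ as a non-unital $\LC$-algebra, and the $\LC[n-1]$-power operation $\oP^{m}(-)$ on $\pi_{*}\tB N$ is natural in this structure. Since every class $x\in \pi_{q}N$ is the pushforward of the tautological class $x_{0}\in \pi_{q}\btNC R^{q}_{c}$ along some map of non-unital $\LC$-algebras $\btNC R^{q}_{c}\to N$, the theorem reduces to the universal case $N=\btNC R^{q}_{c}$ and $x=x_{0}$.

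In this universal setting, the class $\sigma x_{0}\in \pi_{q+1}\tB \btNC R^{q}_{c}$ is represented by the map $R^{q+1}_{c}\simeq \Sigma R^{q}_{c}\to \tB \btNC R^{q}_{c}$ from~\eqref{eqsuspsplit}; freely extending gives a canonical map of non-unital $\LC[n-1]$-algebras
\[
\Phi\colon \btNC[n-1]\Sigma R^{q}_{c}\to \tB \btNC R^{q}_{c}
\]
whose restriction to the $m$-th homogeneous summand $\LC[n-1](m)_{+}\sma_{\Sigma_{m}}(\Sigma R^{q}_{c})^{(m)}$ induces $\oP^{m}(\sigma x_{0})$ on homotopy groups. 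The two composites in the diagram of the theorem therefore compare the $m$-th piece of $\Phi$ precomposed with the parameter suspension, to the composite that embeds a parameter in $R_{*}(\LC[n-1](m)_{+}\sma_{\Sigma_{m}}S^{(mq)})$ into $\pi_{*}\btNC R^{q}_{c}$ via the last-coordinates embedding $\embl\colon \LC[n-1]\hookrightarrow \LC$ and then suspends via $\Sigma \btNC R^{q}_{c}\to \tB \btNC R^{q}_{c}$.

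The main obstacle is to carry out this comparison concretely. I plan to use the explicit description of the $\LC[n-1]$-action on $\tB$ from Theorem~\ref{thmbarnu}: for $c\in \LC[n-1](m)$, the action of $c$ on an $m$-tuple of copies of $\sigma x_{0}$ is given, at simplicial level one of $\tB$, by the map
\[
P_{+}\sma (\btNC R^{q}_{c})^{(m)}\to \tB \btNC R^{q}_{c}
\]
determined by $\embl(c)\colon (\btNC R^{q}_{c})^{(m)}\to \btNC R^{q}_{c}$ and the structure of $\tB\subdot$. Unwinding this identifies the $c$-component of $\oP^{m}(\sigma x_{0})\circ\sigma$ with the image of $\oP^{m}(x_{0})(c)\in \pi_{*}\btNC R^{q}_{c}$ under the natural map $\Sigma \btNC R^{q}_{c}\to \tB \btNC R^{q}_{c}$, which is exactly the $c$-component of $\sigma\circ \oP^{m}(x_{0})$. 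A final formal verification, using the contractibility of $CR^{q}_{c}$ in the suspension cofiber sequence defining $\sigma$ on the parameter spaces, confirms that the two definitions of the parameter-level $\sigma$ agree under this identification.
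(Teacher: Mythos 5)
The paper's proof uses a mechanism that your proposal skips, and I think the omission creates a genuine gap. The paper introduces a ``fat'' bar construction $\tE N$ (an extra-degeneracy version with $\tE_{0}N = \MN$), which is contractible and sits in a sequence
\[
N\to \tE N\to \tB N
\]
of maps of partial $\LC[n-1]$-algebras with trivial composite. Given a representative $R^{q}_{c}\to N$, this lets one lift $CR^{q}_{c}\to \tE N$ and hence build $\Sigma R^{q}_{c}\to \tB N$ \emph{as a map of partial power systems}, so that the correspondence from representatives of $x$ to representatives of $\sigma x$ visibly respects the $\LC[n-1]$-structure. Applying the free functor $\bNC[n-1]$ to this partial-power-system map then produces the comparison of power operations essentially formally; the role of the parameter-level $\sigma$ (the boundary map in $\btNC[n-1]R^{q}_{c}\to \btNC[n-1]CR^{q}_{c}\to\btNC[n-1]R^{q+1}_{c}$) is encoded automatically by the same cone-and-cofiber step.

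Your proposal instead tries to build the map $\Phi\colon\btNC[n-1]\Sigma R^{q}_{c}\to\tB\btNC R^{q}_{c}$ by ``freely extending'' the $R$-module map $\Sigma R^{q}_{c}\to\tB\btNC R^{q}_{c}$ of~\eqref{eqsuspsplit}. This is where the gap sits: $\tB\btNC R^{q}_{c}$ is only a \emph{partial} $\LC[n-1]$-algebra, so a map of $R$-modules into $(\tB N)_{1}$ does not by itself extend to a map of partial $\LC[n-1]$-algebras. To apply $\bNC[n-1]$, you need a map of partial power systems $T\Sigma R^{q}_{c}\to\tB N$, i.e.\ compatible maps $(\Sigma R^{q}_{c})^{(m)}\to(\tB N)_{m}$ for all $m$; the structure maps $\lambda_{1,\dotsc,1}\colon(\tB N)_{m}\to(\tB N)^{(m)}$ point the wrong way for this to be automatic. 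The paper's $\tE$-construction is precisely the device that produces such a lift. Relatedly, your phrase ``the action of $c$ on an $m$-tuple of copies of $\sigma x_{0}$ is given, at simplicial level one of $\tB$'' elides the bookkeeping that makes the comparison genuinely delicate: the class $(\sigma x_{0})$ raised to the $m$-th power naively carries $m$ suspension coordinates (from $(\Sigma R^{q}_{c})^{(m)}$), whereas $\sigma\oP^{m}(x_{0})$ carries only one. It is the cone/cofiber structure on the parameter spaces, tracked compatibly by the $\tE$-lift, that reconciles these. Without either the $\tE$-lift or a replacement for it (e.g.\ exhibiting the map in~\eqref{eqsuspsplit} directly as a map of partial power systems), the ``final formal verification'' in your last paragraph does not close.
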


\begin{proof}
We have an ``$E$'' version of the bar construction where $E_{0}N=K\MN$
and 
\[
E\subdot N = 
\underbrace{K\MN \sma_{R} \dotsb \sma_{R}K\MN}%
_{\ssdot\ \text{factors}} \sma_{R} K\MN
\]
for $\ssdot >0$.  Likewise, we have a $\tE$ version such that $EN=K\tE
N$.  Constructions analogous to those above make these 
into partial non-unital $\LC[n-1]$-algebras, and the inclusion of $N$
in $\MN$ in $\tE_{0}N$ (as, say, $\{1\}_{+}\sma N\subset P_{+}\sma N$)
induces a map of partial $\LC[n-1]$-algebras $N\to \tE N$.  The trivial map
$\MN\to *$ induces a map of partial $\LC[n-1]$-algebras $\tE N\to \tB N$,
giving us a sequence of maps of partial $\LC[n-1]$-algebras
\[
N \to \tE N \to \tB N
\]
with the composite map $N\to \tB N$ the trivial map.  The usual
simplicial contraction argument shows that $\tE$ is contractible.
Choosing a contraction, any map of $R$-modules $R^{q}_{c}\to N$ gives
us a map of partial power systems $CR^{q}_{c}\to \tE N$ and hence a
map of partial power systems $\Sigma R^{q}_{c}\to \tB N$.  This
correspondence lifts the map $\pi_{*}N\to \pi_{*+1}\tB N$ to
representatives.  Applying the free functor $\bNC[n-1]$ and unwinding
the definition of 
\[
\sigma\colon R_{*}(\LC[n-1](m)_{+}\sma_{\Sigma_{m}}S^{(mq)})
\to R_{*+1}(\LC[n-1](m)_{+}\sma_{\Sigma_{m}}S^{(m(q+1))})
\]
above, the result follows.
\end{proof}



\bibliographystyle{plain}
\def\noopsort#1{}\def\MR#1{}

\end{document}